
\documentclass{article}
\usepackage{amsmath}
\usepackage{amssymb}
\usepackage{amsfonts,color}
\usepackage{dsfont}

\setcounter{MaxMatrixCols}{10}

\newtheorem{theorem}{Theorem}[section]

\newtheorem{corollary}[theorem]{Corollary}

\newtheorem{example}[theorem]{Example}

\newtheorem{lemma}[theorem]{Lemma}

\newtheorem{proposition}[theorem]{Proposition}

\newenvironment{proof}[1][Proof]{\noindent\textbf{#1.} }{\ \rule{0.5em}{0.5em}}

\begin{document}

\title{Hitting Probability and the Hausdorff Measure of the Level sets for
Spherical Gaussian Fields}
\author{Xiaohong Lan \\
School of Mathematical Sciences \\
University of Science and Technology of China \\
E-mail: xhlan@ustc.edu.cn}
\maketitle

\begin{abstract}
Consider an isotropic spherical Gaussian random field $\mathbf{T}$ with
values in $%
\mathbb{R}
^{d}$. We investigate two problems: (i) When is the level set $\mathbf{T}%
^{-1}(\mathbf{t})\ $nonempty with positive probabibility for any $\mathbf{%
t\in }%
\mathbb{R}
^{d}$ ? (ii) If the level set is nonempty, what is its Hausdorff measure?
These two question are not only very important in potential theory for
random fields, but also foundamental in geometric measure theory. We give a
complete answer to the questions under some very mild conditions.
\end{abstract}

\textsc{Key words}: Spherical Gaussian Fields, Level Sets, Hausdorff
Measure, Local Times, Capacity.

\textsc{2010 Mathematics Subject Classification}: 60G60, 60G17, 60G15, 42C40.

\section{Introduction and Statement of Main Results}


The investigation of geometric properties of level sets of random fields is
an interesting topic in modern probability and in several applications.
Depending on whether the sample functions of the random field are smooth
(e.g. continuously differentiable) or not, the properties of the level sets
have to be studied using different geometric or topological tools. We refer
to Adler \cite{Adler81}, Adler and Taylor \cite{AdlerTaylor, AdlerT11},
Adler et al. \cite{AdlerTW12}, and Aza\"is and Wschebor \cite{AW09} for
systematic accounts on geometry of random fields and applications.

One of our interest is to find the connection between the existence of
nonempty level sets and capacity, or we say the hitting probability $\mathbb{%
P}\left\{ \mathbf{T}^{-1}(\mathbf{t})\neq \varnothing \right\} $ for any $%
\mathbf{t\in }%
\mathbb{R}
^{d}$ in our case. The problem has been studied widely for $d=1$ with $1$%
-dimensional random processes and we cite a few such as \cite{Bert} and \cite%
{Hawkes}, etc. For $d>1$ and multi-dimensional random fields on Euclidean
space, it has been only known for a few, see for instance \cite{BierLacXiao}%
, \cite{KhoshShi} and \cite{KhoshXiao}. Unfortunately, for $d>1$ and
multi-dimensional random fields on non-Euclidean space, there is no known
result on this problem. We show that when some conditions hold, the hitting
probability is indeed positive.

Whence proved the existence of nonempty level sets under some circumstances,
a naturally question is, what is the volume of this level set? When the
sample functions satisfy certain smoothness/differentiability conditions,
the expected value of the volume of these level sets can be computed
explicitly in a wide variety of circumstances, by exploiting techniques
based on the Kac-Rice formula and its generalizations (see \cite{AdlerTaylor}%
, Chapters 11-12). For instance, the length of the level curve for
stationary Gaussian random fields on the two-dimensional plane was
investigated by Kratz and Le\'{o}n \cite{KratzLeon, KratzLeon06, KratzLeon10}%
.

The applicability of the Kac-Rice approach is restricted to cases where
differentiability conditions are ensured. When the sample functions of the
random field are nowhere differentiable, the level sets are often fractals
and their geometric properties are closely related to the analytic
properties of the local times of the random field. See Geman and Horowitz 
\cite{GeHor}, Xiao \cite{Xiao97,Xiao09}, and the references therein for more
information. In this paper, we extend the methods for studying the Hausdorff
measure of level sets for Gaussian fields to the spherical setting. The new
main ingredient is the strong local nondeterminism and upper bound of higher
order moment of local time established recently in \cite{LanMarXiao} and 
\cite{LanXiao}, respectively.

More precisely, we shall focus on the level sets of an isotropic Gaussian
random field $\mathbf{T}=\left\{ \mathbf{T}(x),\,x\in \mathbb{S}^{2}\right\} 
$ with values in $\mathbb{R}^{d}$ defined on some probability space $(\Omega
,\Im ,\mathbb{P})$ by 
\begin{equation}
\mathbf{T}(x)=\left( T_{1}(x),\ldots ,T_{d}(x)\right) ,\quad x\in \mathbb{S}%
^{2},  \label{Def:T}
\end{equation}%
where $T_{1},\ldots ,T_{d}$ are independent copies of $T_{0}=\left\{
T_{0}(x),\quad x\in \mathbb{S}^{2}\right\} $. We assume $T_{0}$ to be a
zero-mean, mean square continuous and isotropic random fields on the sphere,
for which the following spectral representation holds (\cite{MPbook},
Chapter 5) : for $x\in \mathbb{S}^{2},$ 
\begin{equation}
T_{0}(x)=\sum_{\ell \geq 0}\sum_{m=-\ell }^{\ell }a_{\ell m}Y_{\ell m}(x)
\label{rapT}
\end{equation}%
where $\{a_{\ell m}\}_{\ell ,m},\ \ell =0,1,2,...,$ $m=-\ell ,...,\ell $ is
a triangular array of zero-mean, orthogonal, complex-valued random variables
with variance $\mathbb{E}|a_{\ell m}|^{2}=C_{\ell },$ the angular power
spectrum of the random field. For $m<0$ we have $a_{lm}=(-1)^{m}\overline{%
a_{l-m}}$ , whereas $a_{l0}$ is real with the same mean and variance. (\ref%
{rapT}) holds both in $L^{2}(\Omega )$ at every fixed $x,$ and in $%
L^{2}(\Omega \times {\mathbb{S}}^{2}),$ i.e.%
\begin{equation}
\lim_{L\rightarrow \infty }\mathbb{E}\bigg[T(x)-\sum_{\ell
}^{L}\sum_{m}a_{\ell m}(\omega )Y_{\ell m}(x)\bigg]^{2}=0,  \label{rapT-MSC}
\end{equation}%
and 
\begin{equation}
\lim_{L\rightarrow \infty }\mathbb{E}\left\vert \int_{S^{2}}\left\{
T_{0}(x)-\sum_{l=1}^{L}\sum_{m=-l}^{l}a_{lm}Y_{lm}(x)\right\} ^{2}d\nu
(x)\right\vert ^{2}=0\text{ ,}  \label{rapT-L2}
\end{equation}%
Here $\nu $ denotes the canonical Lebesgue measure on the unit sphere with $%
d\nu (x)=\sin \vartheta d\vartheta d\varphi $ in the spherical coordinates $%
(\vartheta ,\varphi )\in \lbrack 0,\pi ]\times \lbrack 0,2\pi )$. The
functions $\left\{ Y_{\ell m}(x)\right\} $ are the so-called spherical
harmonics, i.e. the eigenvectors of the Laplacian operator on the sphere. It
is a well-known result that the spherical harmonics provide a complete
orthonormal systems for $L^{2}(S^{2}),$ see again \cite{MPbook}. A
celebrated theorem of Schoenberg \cite{schoenberg1942} provides the
following expansion for the covariance function: 
\begin{equation}
\mathbb{E}\left[ T_{0}(x)T_{0}(y)\right] =\sum_{\ell =0}^{+\infty }C_{\ell }%
\frac{2\ell +1}{4\pi }P_{\ell }\left( \langle x,y\rangle \right) ,
\label{CovT0}
\end{equation}%
where for $\ell =0,1,2,...,$ $P_{\ell }:[-1,1]\rightarrow \mathbb{R}$
denotes the Legendre polynomial, which satisfy the normalization condition $%
P_{\ell }(1)=1$. Thus, without loss of generality, we assume for every $x\in 
\mathbb{S}^{2},$ 
\begin{equation*}
\mathbb{E}\left[ T_{0}\left( x\right) ^{2}\right] =\sum_{\ell =0}^{+\infty
}C_{\ell }\frac{2\ell +1}{4\pi }:=1.
\end{equation*}

We introduce now the same, mild regularity conditions on the angular power
spectrum $C_{\ell }$ of the random field $T_{0}(x)$ as in \cite{LanMarXiao}.

\ \textbf{Condition} (\textbf{A)} The random field $T_{0}( x) $ is Gaussian
and isotropic with angular power spectrum such that, for all $\ell \geq 1,$
there exist constants $\alpha >2,$ $K_{0}>1,$ such that 
\begin{equation}  \label{Specon}
C_{\ell }=\ell ^{-\alpha }G(\ell )>0,
\end{equation}
where 
\begin{equation*}
K_{0}^{-1}\leq G(\ell )\leq K_{0}\text{ for all }\ell \in \mathbb{N}^{+}%
\text{.}
\end{equation*}

Condition \textbf{(A)} entails a weak smoothness requirement on the tail
behavior of the angular power spectrum, which is trivially satisfied by some
cosmologically relevant models (where the angular power spectrum usually
behaves as an inverse polynomial, see \cite{Dodelson}, pp.243-244).

Now denote by 
\begin{equation*}
\mathbf{T}^{-1}(\mathbf{t})=\left\{ x\in \mathbb{S}^{2}:\mathbf{T}(x)=%
\mathbf{t}\right\}
\end{equation*}%
the level set at any $\mathbf{t}\in \mathbb{R}^{d}$ and $\phi $-$m$ the
Hausdorff measure associated to the function $\phi $. We state the following
result for the critical condition on the existence of nonempty level sets as
well as determining the exact Hausdorff measure function for the level set $%
\mathbf{T}^{-1}(\mathbf{t})$. In particular, it implies that the Hausdorff
dimension of $\mathbf{T}^{-1}(\mathbf{t})$ equals $2-(\alpha -2)d/2$ in view
of Theorem 1.2 in \cite{LanXiao}.

\begin{theorem}
\label{Th:Main} Let $\mathbf{T}=\left\{ \mathbf{T}(x),x\in \mathbb{S}%
^{2}\right\} $ be a Gaussian random field with values in $\mathbb{R}^{d}$
defined in $(\ref{Def:T})$. Assume that the associated isotropic random
field $T_{0}$ satisfies Condition (\textbf{A}) with $2<\alpha <4$. Then 
\begin{equation}
\mathbb{P}\left\{ \mathbf{T}^{-1}(\mathbf{0})\neq \varnothing \right\} >0\
\iff \ 4-\left( \alpha -2\right) d>0;  \label{Th:Existence}
\end{equation}%
Moreover, there exists a constant $K_{1}>1$ such that for every $\mathbf{t}%
\in \mathbb{R}^{d},$ the Hausdorff measure of the level set $\mathbf{T}^{-1}(%
\mathbf{t})$ associated with $\phi $ satisfies that 
\begin{equation}
K_{1}^{-1}L\left( \mathbf{t},\mathbb{S}^{2}\right) \leq \phi \text{-}m\left( 
\mathbf{T}^{-1}(\mathbf{t})\right) \leq K_{1}L\left( \mathbf{t},\mathbb{S}%
^{2}\right) ,\ a.s.\text{ .}  \label{Th:Measure}
\end{equation}%
where $L\left( \mathbf{t},\mathbb{S}^{2}\right) $ is the local time defined
in (\ref{defLT}) Section \ref{Sec:Preliminaries}, and the function $\phi $
is defined by 
\begin{equation}
\phi \left( r\right) =\frac{r^{2}}{\left[ \rho _{\alpha }(r/\sqrt{\log
\left\vert \log r\right\vert })\right] ^{d}}  \label{def:psi}
\end{equation}%
with $\rho _{\alpha }(r)=r^{\frac{\alpha }{2}-1}$ for $r\geq 0.$
\end{theorem}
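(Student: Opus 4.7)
The plan is to treat the hitting--probability equivalence and the Hausdorff--measure estimate separately, both via local-time/capacity arguments adapted to the sphere, with two analytic inputs: the strong local nondeterminism of $T_0$ from \cite{LanMarXiao} and the higher-moment bound on the local time from \cite{LanXiao}. Under Condition (\textbf{A}) the variogram satisfies $\mathbb{E}(T_0(x)-T_0(y))^2\asymp d(x,y)^{\alpha-2}$ for the geodesic distance $d$ on $\mathbb{S}^2$, so that $T_0$ has effective Hurst index $H=(\alpha-2)/2$, and the hypothesis $4-(\alpha-2)d>0$ is exactly $\dim_{\mathcal H}\mathbb{S}^2=2>Hd$. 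For the sufficient direction of \eqref{Th:Existence}, I would introduce the mollified occupation measure $\mu_\varepsilon(dx)=(2\pi\varepsilon^2)^{-d/2}\exp(-\|\mathbf{T}(x)\|^2/(2\varepsilon^2))\,d\nu(x)$, check that $\mathbb{E}\mu_\varepsilon(\mathbb{S}^2)\asymp 1$, and, using strong local nondeterminism to diagonalize the joint Gaussian density of $(\mathbf{T}(x),\mathbf{T}(y))$, estimate
\[
\mathbb{E}\mu_\varepsilon(\mathbb{S}^2)^2\leq K\int_{\mathbb{S}^2}\!\int_{\mathbb{S}^2}\frac{d\nu(x)\,d\nu(y)}{d(x,y)^{Hd}}<\infty
\]
whenever $Hd<2$; Paley--Zygmund applied to a weak limit point of $\mu_\varepsilon$ then yields $\mathbb{P}(\mathbf{T}^{-1}(\mathbf{0})\neq\varnothing)>0$, and the occupation density of that limit is the local time $L(\mathbf{t},\cdot)$ used in \eqref{Th:Measure}. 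For the converse ($Hd\geq 2$), a standard covering of $\mathbb{S}^2$ by $\asymp r_n^{-2}$ geodesic balls of radius $r_n=2^{-n}$, combined with Gaussian modulus-of-continuity and small-ball estimates, shows that the expected number of covering balls meeting $\mathbf{T}^{-1}(\mathbf{0})$ tends to $0$, hence the level set is a.s.\ empty.

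\smallskip\noindent\textbf{Upper bound in \eqref{Th:Measure}.} Working along dyadic scales $r_n=2^{-n}$, I would take a minimal geodesic $r_n$-cover $\{B(x_i,r_n)\}_{i\leq N_n}$ with $N_n\asymp r_n^{-2}$. The Gaussian modulus of $\mathbf{T}$ gives $\mathbb{P}(\mathbf{T}^{-1}(\mathbf{t})\cap B(x_i,r_n)\neq\varnothing)\leq K\,\phi(r_n)\,r_n^{-2}$, where the $(\log\log 1/r_n)^{d/2}$ factor in $\phi$ appears because one allows $\|\mathbf{T}(x_i)-\mathbf{t}\|\leq Cr_n^H\sqrt{\log\log 1/r_n}$. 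A second-moment/Borel--Cantelli argument, powered by the higher-moment bound of \cite{LanXiao}, shows that a.s.\ for all $n$ large enough every ball that meets the level set contributes a positive multiple of $L(\mathbf{t},B(x_i,r_n))$ of size at least $\delta\,\phi(r_n)$, so that
\[
\sum_i \phi(r_n)\,\mathbf{1}_{B(x_i,r_n)\cap \mathbf{T}^{-1}(\mathbf{t})\neq\varnothing}\leq \delta^{-1}\,L(\mathbf{t},\mathbb{S}^2),
\]
and the definition of $\phi$-$m$ yields the upper inequality in \eqref{Th:Measure}.

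\smallskip\noindent\textbf{Lower bound in \eqref{Th:Measure}.} By the Rogers--Taylor density theorem the lower inequality reduces to proving
\[
\limsup_{r\to 0^+}\frac{L(\mathbf{t},B(x,r))}{\phi(r)}\leq K\quad\text{for }L(\mathbf{t},\cdot)\text{-a.a.\ }x\in\mathbb{S}^2,\text{ a.s.}
\]
For this I would feed the moment bound $\mathbb{E}[L(\mathbf{t},B(x,r))^k]\leq (Ck)^{Hdk/2}r^{k(2-Hd)}$ from \cite{LanXiao} into a Chebyshev + Borel--Cantelli argument along $r_n=2^{-n}$, choosing the moment order $k=k_n\asymp\log n$ so that Stirling reproduces exactly the $(\log|\log r|)^{Hd/2}$ correction encoded in $\phi$.

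\smallskip\noindent\textbf{Main obstacle.} The technical core is the last step: turning the $k$-th moment bound into an almost-sure density estimate with the sharp double-logarithmic factor, \emph{uniformly in the center $x\in\mathbb{S}^2$}. This is precisely where the non-Euclidean geometry must be absorbed through the uniform form of strong local nondeterminism from \cite{LanMarXiao}; any loss in uniformity of the constants would spoil the log--log exponent. Once this density estimate is secured, the two-sided bound \eqref{Th:Measure} and the Hausdorff-dimension identity $\dim_{\mathcal H}\mathbf{T}^{-1}(\mathbf{t})=2-(\alpha-2)d/2$ follow in a routine way, via Theorem 1.2 of \cite{LanXiao}.
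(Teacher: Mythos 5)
Your overall architecture (a second--moment/capacity argument for existence, Rogers--Taylor for the lower bound, a covering argument for the upper bound) matches the paper's, but two of your steps would fail as written. The most serious gap is in the upper bound of (\ref{Th:Measure}). You cover $\mathbb{S}^{2}$ by balls of a single deterministic radius $r_{n}$ and assert that, by a second--moment/Borel--Cantelli argument, every ball meeting $\mathbf{T}^{-1}(\mathbf{t})$ carries local time at least $\delta \phi (r_{n})$. This cannot work at a fixed scale: to force $L(\mathbf{t},B(x_{i},r_{n}))\gtrsim \phi (r_{n})=r_{n}^{2}/w(r_{n})^{d}$ with $w(r)=\rho _{\alpha }(r/\sqrt{\log |\log r|})$ one needs the oscillation of $\mathbf{T}$ on the ball to be of order $w(r_{n})$, and by the small--ball estimate (Lemma \ref{LemTmain}) this event has probability only about $\exp (-K\log |\log r_{n}|)=|\log r_{n}|^{-K}$, i.e.\ it fails for the overwhelming majority of balls; your covering sum then picks up a divergent factor of order $(\log (1/r_{n}))^{d/2}$ and yields the upper bound only for a strictly larger gauge function. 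The paper's Proposition \ref{Smooth} is exactly the device needed to repair this: for each center one looks for \emph{some} radius $r\in (r_{0}^{2},r_{0})$ at which the oscillation is at most $2Cw(r)$ and the local time exceeds $\frac{\pi }{C}\phi (r)$, and the probability that no such radius exists is made small ($\leq |\log r_{0}|^{-2}$) by exploiting the independence of the band--limited components $\mathbf{T}^{L_{k},U_{k}}$ over disjoint frequency blocks --- the spherical substitute for the independence of increments used by Talagrand and Baraka--Mountford. One must then also handle the exceptional cells admitting no good radius (the family $\mathcal{H}_{2,p}$ in the paper) through a cardinality bound combined with the crude uniform modulus.

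Second, your proof of necessity in (\ref{Th:Existence}) (a first--moment count of covering balls meeting the level set) breaks down precisely at the critical case $(\alpha -2)d=4$: there the expected number of hit balls among the $\asymp r_{n}^{-2}$ covering balls is of order $(\log (1/r_{n}))^{d/2}$ (and at best $O(1)$ even with a sharp hitting estimate), which does not tend to zero, yet the theorem asserts the level set is a.s.\ empty in that case. The paper closes this case through the implication $(ii)\Rightarrow (iii)$ of Proposition \ref{Prop:Capacity}: a conditional second--moment argument showing that positive hitting probability forces $\int_{D(N,r)}\Phi (d_{\mathbb{S}^{2}}(x,N))\,d\nu (x)<\infty $, which fails exactly when $1+d(1-\alpha /2)\leq -1$. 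Finally, a smaller point on the lower bound: Chebyshev plus Borel--Cantelli applied to $\mathbb{E}[L(\mathbf{t},B(x,r))^{k}]$ gives the density estimate for each fixed $x$ (hence for $\nu $-a.e.\ $x$), but Rogers--Taylor requires it for $L(\mathbf{t},\cdot )$-a.e.\ $x$, and $\mathbf{T}^{-1}(\mathbf{t})$ is $\nu $-null; the paper obtains the correct statement by bounding the weighted moments $\mathbb{E}\int_{\mathbb{S}^{2}}[L(\mathbf{t},D(x,2^{-m}))]^{n}L(\mathbf{t},d\nu (x))$, i.e.\ $(n+1)$-point moments of the local time, as in Proposition \ref{Prop:LIL}.
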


In general, it has been known that Hausdorff measure of level sets can be
controlled lower bounded by relative local time, see for instance Xiao \cite%
{Xiao97,Xiao09} for Gaussian fields indexed in Euclidean space. Our theorem
above significantly improves their results by proving that the upper bound
by the local time also holds.

The rest of this paper is as follows\emph{: }we collect some technical
lemmas and their proofs in Section \ref{Sec:Preliminaries}. The critical
conditions on the existence of nonempty level set is presented in Section %
\ref{Sec:Capacity}. 
Section \ref{Sec:Measure} deals with the Hausdorff measure for the level
sets. The lower bound for the Hausdorff measure is derived by applying our
result on the local times and an upper density theorem of Roger and Taylor 
\cite{RogTaylor}. The proof of the upper bound is more difficult and we
extend the covering argument in Talagrand \cite{Talagrand95, Talagrand98}
and Xiao \cite{Xiao97}, and their strengthened version of Barak and
Mountford \cite{BarakMount} to the spherical setting. This part is technical
and is presented in Section \ref{Sec:PropSmooth}. As an illustration of
Theorem \ref{Th:Main}, we give two examples in Section \ref{Sec:Measure} as
well.

We denote by $\left\vert \mathbf{u}\right\vert
_{\infty}=\sup_{i=1,...,d}\left\vert u_{i}\right\vert ,$ the supremum of $%
\left\vert u_{i}\right\vert ,i=1,...,d,$ for $\mathbf{u}\in \mathbb{R}^{d},$
and use $K$ to denote a constant whose value may change in each appearance,
and $K_{i,j}$ to denote the $j$th more specific positive finite constant in
Section $i.$ 

\medskip

\textbf{Acknowledgement} The author thanks Professors Domenico Marinucci and
Yimin Xiao for stimulating discussions and helpful comments on this paper.
Research of X. Lan is supported by NSFC grant 11501538 and CAS grant
QYZDB-SSW-SYS009.

\section{Preliminaries \label{Sec:Preliminaries}}

In this section, we collect a few technical results which will be
instrumental for most of the proofs to follow. Recalling formula $\left( \ref%
{CovT0}\right) $, for any two points $x,y\in \mathbb{S}^{2},$ the variogram
of $T_{0}$ is defined by 
\begin{eqnarray}
\sigma ^{2}(x,y) &=&\mathbb{E}\left\{ \left[ T_{0}(x)-T_{0}(y)\right]
^{2}\right\}  \label{variogram-sig} \\
&=&2-2\mathbb{E}\left[ T_{0}(x)T_{0}(y)\right] =\sigma ^{2}\left( d_{\mathbb{%
S}^{2}}(x,y)\right)  \notag
\end{eqnarray}%
Here we have made a little abuse of the notation $\sigma ^{2}.$ The
following lemma from \cite{LanMarXiao} characterizes the variogram and the
property of strong local nondeterminism of $T_{0}$.

\begin{lemma}
\label{Lem:C1C2} Under Condition (\textbf{A}) with $2<\alpha <4$, there
exist positive constants $K_{2,1}\geq 1,$ $0<\delta _{0}<1$ depending only
on $\alpha $ and $K_{0},$ such that for any $x,y\in \mathbb{S}^{2},$ if $d_{%
\mathbb{S}^{2}}(x,y)<\delta _{0},$ we have 
\begin{equation}  \label{variogram}
K_{2,1}^{-1}\rho _{\alpha }^{2}( d_{\mathbb{S}^{2}}(x,y)) \leq \sigma ^{2}(
d_{\mathbb{S}^{2}}( x,y) ) \leq K_{2,1}\rho _{\alpha }^{2}( d_{\mathbb{S}%
^{2}}(x,y)) .
\end{equation}
Moreover, there exists a constant $K_{2,2}>0$ depending on $\alpha $ and $%
K_{0}$ only, such that for all integers $n\geq 1$ and all $%
x,x_{1},...,x_{n}\in \mathbb{S}^{2},$ we have 
\begin{equation}  \label{SLND}
\mathrm{Var}\left( T_{0}( x) |T_{0}( x_{1}),...,T_{0}( x_{n}) \right) \geq
K_{2,2}\min_{1\leq k\leq n}\rho_{\alpha }^{2}\left( d_{\mathbb{S}^{2}}(
x,x_{k}) \right) .
\end{equation}
\end{lemma}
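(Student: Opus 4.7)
The proof splits into the variogram comparison (\ref{variogram}) and the SLND lower bound (\ref{SLND}), which I would attack by entirely different techniques. For the first, my starting point is the spectral identity
\begin{equation*}
\sigma^{2}(\theta) \ =\ 2\sum_{\ell\ge 1}C_{\ell}\,\frac{2\ell+1}{4\pi}\,\bigl(1-P_{\ell}(\cos\theta)\bigr),\qquad \theta = d_{\mathbb{S}^{2}}(x,y),
\end{equation*}
which follows from (\ref{CovT0}), the normalization, and $P_\ell(1)=1$. Using the classical Hilb-type bound $0\le 1-P_\ell(\cos\theta)\lesssim \min(\ell^2\theta^2,1)$ together with a matching lower bound on averages over dyadic frequency windows, I would split the series at $\ell_0\sim 1/\theta$. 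The low-frequency piece contributes
\begin{equation*}
\theta^{2}\sum_{\ell\le 1/\theta}\ell^{3-\alpha}\ \asymp\ \theta^{\alpha-2},
\end{equation*}
since $2<\alpha<4$ forces $3-\alpha\in(-1,1)$ and the partial sum has order $(1/\theta)^{4-\alpha}$; the high-frequency piece contributes $\sum_{\ell>1/\theta}\ell^{1-\alpha}\asymp \theta^{\alpha-2}$ because $\alpha>2$. The two-sided control $K_{0}^{-1}\le G(\ell)\le K_{0}$ propagates through both estimates, and restricting to $\theta<\delta_{0}$ absorbs the $O(1)$ low modes into the constants, yielding (\ref{variogram}) with $\rho_{\alpha}^{2}(\theta)=\theta^{\alpha-2}$.

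The SLND estimate is the real work. I would use the variational characterization
\begin{equation*}
\mathrm{Var}\bigl(T_{0}(x)\mid T_{0}(x_{1}),\ldots,T_{0}(x_{n})\bigr) \ =\ \inf_{Z}\bigl\|T_{0}(x)-Z\bigr\|_{L^{2}(\Omega)}^{2},
\end{equation*}
the infimum running over the linear span of $T_{0}(x_{1}),\ldots,T_{0}(x_{n})$. Setting $r=\min_{k}d_{\mathbb{S}^{2}}(x,x_{k})$ and $L\asymp 1/r$, I would build a bandpass test functional
\begin{equation*}
Y_{L}\ :=\ \int_{\mathbb{S}^{2}}T_{0}(y)\,\Psi_{L}\bigl(\langle x,y\rangle\bigr)\,d\nu(y),
\end{equation*}
where $\Psi_{L}$ is a smooth zonal kernel whose spectral support lies in $[L,2L]$. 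By the addition theorem one computes
\begin{equation*}
\mathbb{E}[Y_{L}T_{0}(x)]\ \asymp\ \|Y_{L}\|^{2}\ \asymp\ \sum_{\ell\in[L,2L]}C_{\ell}\,\frac{2\ell+1}{4\pi}\ \asymp\ L^{2-\alpha},
\end{equation*}
while for each $k$ the off-pole behavior of $\Psi_{L}(\langle x,x_{k}\rangle)$ makes $|\mathbb{E}[Y_{L}T_{0}(x_{k})]|$ negligibly small. After orthogonalizing $Y_{L}$ against the $T_{0}(x_{k})$'s, a perturbation that preserves the order of both numerator and denominator, the duality inequality
\begin{equation*}
\|T_{0}(x)-Z\|\ \ge\ \frac{\bigl|\mathbb{E}[Y_{L}(T_{0}(x)-Z)]\bigr|}{\|Y_{L}\|}
\end{equation*}
delivers a lower bound of order $L^{2-\alpha}\asymp \rho_{\alpha}^{2}(r)$.

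The main obstacle, and the reason this step is genuinely spherical rather than a transcription of the Euclidean SLND arguments, is the quantitative decay of the zonal bandpass kernel $\Psi_{L}(\langle x,\cdot\rangle)$ outside a spherical cap of radius $\sim 1/L$ around $x$. Unlike Euclidean Fourier bandpass kernels, Legendre polynomials only decay like $(\ell\theta)^{-1/2}$ with oscillation, so one must combine the smoothness of the spectral cutoff with nontrivial stationary-phase or summation-by-parts estimates to obtain enough off-pole decay to make the orthogonalization error manageable. Once these zonal kernel estimates from \cite{LanMarXiao} are imported, the duality argument above closes and yields (\ref{SLND}) with the advertised constant $K_{2,2}$.
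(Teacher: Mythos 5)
First, a point of comparison: the paper contains no proof of this lemma at all --- it is quoted verbatim from \cite{LanMarXiao}, so there is no in-paper argument to measure yours against. Judged on its own terms, your sketch of the variogram estimate (\ref{variogram}) is sound and is essentially the standard route: write $\sigma^2(\theta)=2\sum_{\ell\ge1}C_\ell\frac{2\ell+1}{4\pi}(1-P_\ell(\cos\theta))$, split at $\ell\sim1/\theta$, use $1-P_\ell(\cos\theta)\lesssim\min(\ell^2\theta^2,1)$ for the upper bound, and get the matching lower bound from the fact (via Hilb/Mehler--Heine asymptotics) that $1-P_\ell(\cos\theta)\ge c>0$ on average over a dyadic window $\ell\asymp1/\theta$; the exponent arithmetic with $2<\alpha<4$ is correct.

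The SLND part, however, has a genuine gap, and it sits exactly where you wave at ``orthogonalizing $Y_L$ against the $T_0(x_k)$'s, a perturbation that preserves the order of both numerator and denominator.'' Write $P$ for the orthogonal projection onto $\mathrm{span}\{T_0(x_1),\dots,T_0(x_n)\}$. To use duality you must lower-bound $|\mathbb{E}[T_0(x)(I-P)Y_L]|=|\mathbb{E}[T_0(x)Y_L]-\mathbb{E}[T_0(x)PY_L]|$, and the error term is $\sum_k c_k\mathbb{E}[T_0(x_k)Y_L]$ with coefficients $c_k$ determined by the inverse Gram matrix of the $T_0(x_k)$. Since $n$ is arbitrary and the $x_k$ may cluster (e.g.\ many points all at distance exactly $r$ from $x$), that Gram matrix can be nearly singular and $\sum_k|c_k|$ is completely uncontrolled; making each individual correlation $|\mathbb{E}[Y_LT_0(x_k)]|$ small via off-pole decay of the zonal bandpass kernel does not make the linear combination small. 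So the obstacle is not, as you suggest, the $(\ell\theta)^{-1/2}$ decay of Legendre polynomials --- it is structural: a spectrally localized test functional cannot annihilate the conditioning variables exactly, and nothing bounds the projection coefficients.

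The argument that actually closes (Pitt's scheme, as adapted to the sphere in \cite{LanMarXiao}) reverses your localization. One takes a smooth test function $\varphi$ supported in the cap $D(x,r/2)$, so that the pairing of $\varphi$ with every point mass at $x_k$ vanishes \emph{identically}, killing the unknown coefficients $a_k$ regardless of their size; the price is paid in the dual norm $\sum_\ell|\widehat{\varphi}_\ell|^2C_\ell^{-1}(2\ell+1)$, and it is precisely here --- not in the variogram bound --- that the lower bound $C_\ell\ge K_0^{-1}\ell^{-\alpha}$ from Condition (\textbf{A}) is consumed, together with the rapid decay of $\widehat{\varphi}_\ell$ for smooth compactly supported $\varphi$. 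Your spatial/spectral roles are swapped, and as written the duality step does not go through.
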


\bigskip For any fixed point $x_{0}\in \mathbb{S}^{2},$ define the spherical
random field $Y_{x_{0}}(x)=T_{0}(x)-T_{0}(x_{0}),\ x\in \mathbb{S}^{2}.$ An
immediate consequence of Lemma \ref{Lem:C1C2} is as follows:

\begin{corollary}
\label{C2'} Under Condition (\textbf{A}) with $2<\alpha <4$, there exists a
constant $K_{2,2}^{\prime }>0,$ such that for all integers $n\geq 1$ and all 
$x,x_{1},...,x_{n}\in \mathbb{S}^{2},$

\begin{equation*}
\mathrm{Var}\left( Y_{x_{0}}(x)|Y_{x_{0}}(x_{1}),...,Y_{x_{0}}(x_{n})\right)
\geq K_{2,2}^{\prime }\min_{0\leq k\leq n}\left\{ \rho _{\alpha }^{2}(d_{%
\mathbb{S}^{2}}(x,x_{k}))\right\} .
\end{equation*}
\end{corollary}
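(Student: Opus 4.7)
The plan is to reduce the statement for the field $Y_{x_0}$ directly to the strong local nondeterminism inequality \eqref{SLND} in Lemma \ref{Lem:C1C2}. The key observation is that the sigma-algebra generated by $\{Y_{x_0}(x_1),\ldots,Y_{x_0}(x_n)\}$ contains strictly less information than the sigma-algebra generated by $\{T_0(x_0),T_0(x_1),\ldots,T_0(x_n)\}$, since each $Y_{x_0}(x_k) = T_0(x_k) - T_0(x_0)$ lies in the linear span of the latter collection.

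Concretely, I would argue as follows. Because conditioning on additional jointly Gaussian variables can only decrease the conditional variance, we have
\begin{equation*}
\mathrm{Var}\left( Y_{x_{0}}(x)\,\big|\,Y_{x_{0}}(x_{1}),\ldots,Y_{x_{0}}(x_{n})\right)
\;\geq\; \mathrm{Var}\left( T_{0}(x) - T_{0}(x_{0})\,\big|\,T_{0}(x_{0}),T_{0}(x_{1}),\ldots,T_{0}(x_{n})\right).
\end{equation*}
Since $T_{0}(x_{0})$ is measurable with respect to the enlarged conditioning sigma-algebra, subtracting it does not affect the conditional variance, so the right-hand side equals
\begin{equation*}
\mathrm{Var}\left( T_{0}(x)\,\big|\,T_{0}(x_{0}),T_{0}(x_{1}),\ldots,T_{0}(x_{n})\right).
\end{equation*}

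Applying the strong local nondeterminism bound \eqref{SLND} of Lemma \ref{Lem:C1C2} to this quantity (with the conditioning set $\{x_0,x_1,\ldots,x_n\}$ of size $n+1$) yields
\begin{equation*}
\mathrm{Var}\left( T_{0}(x)\,\big|\,T_{0}(x_{0}),\ldots,T_{0}(x_{n})\right)
\;\geq\; K_{2,2}\,\min_{0\leq k\leq n}\rho_{\alpha}^{2}\!\left(d_{\mathbb{S}^{2}}(x,x_{k})\right),
\end{equation*}
so the corollary holds with $K_{2,2}^{\prime}=K_{2,2}$. There is no genuine obstacle here: the only subtlety is checking that the information ordering is correct (that the $Y_{x_0}(x_k)$'s provide weaker, not stronger, information than the $T_0(x_k)$'s together with $T_0(x_0)$), which follows immediately from the definition of $Y_{x_0}$ as a linear combination.
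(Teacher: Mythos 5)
Your argument is correct and is precisely the ``immediate consequence'' the paper has in mind: enlarge the conditioning set from $\{Y_{x_0}(x_k)\}_{k=1}^n$ to $\{T_0(x_0),T_0(x_1),\ldots,T_0(x_n)\}$, drop the $T_0(x_0)$ term inside the conditional variance since it is measurable with respect to the larger conditioning $\sigma$-algebra, and apply \eqref{SLND} to the $n+1$ points $x_0,\ldots,x_n$, yielding the minimum over $0\leq k\leq n$ with $K_{2,2}'=K_{2,2}$. The paper offers no written proof, and yours fills that gap with exactly the intended reasoning.
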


Now let us introduce the local times of a random field on the unit sphere.
Recall first that, for any Borel sets $B\subset \mathbb{R}^{d}$, $D\subseteq 
\mathbb{S}^{2}$ and sample $\omega \in \Omega ,$ the \emph{occupation measure%
} of a spherical random field $\mathbf{T}$ in $B$ is defined by (c.f. \cite%
{GeHor}) 
\begin{equation*}
\mu _{D}\left( B,\omega \right) :=\int_{D}\mathrm{1}_{B}\left( \mathbf{T}%
\left( x,\omega \right) \right) d\nu \left( x\right) =\nu \left\{ x\in D:%
\mathbf{T}\left( x,\omega \right) \in B\right\} ,
\end{equation*}%
with $\mathrm{1}_{B}\left( \cdot \right) $ the index function. We recall
also that \emph{local times} of $\mathbf{T}$ exist on $\mathbb{R}^{d}$
provided that the measure $\mu _{D}(d\mathbf{t})$ is $a.s.$ absolutely
continuous \emph{w.r.t} the Lebesgue measure on $\mathbb{R}^{d}$. More
precisely, when $\mu $ is absolutely continuous, there exists a function $L(%
\mathbf{t},D)=L\left( \mathbf{t},D,\omega \right) \geq 0$ which is
measurable on $\mathbb{S}^{2}$ and such that for almost all $\omega \in
\Omega $ and each measurable set $B\subset \mathbb{R}^{d},$ 
\begin{equation}
\mu _{D}(B,\omega )=\int_{B}L(\mathbf{t},D,\omega )d\mathbf{t}\text{ }.
\label{defLT}
\end{equation}%
We call $L(\mathbf{t,}D)$ the \emph{local time} of $\mathbf{T}$ on $\mathbb{R%
}^{d}.$ See \cite{Pitt, Xiao09} for more information about local times of
random fields on the Euclidean spaces.

The following results are essential to our discussion about the inequalities 
$\left( \ref{Th:Measure}\right) $, and have been established in \cite%
{LanXiao}.

\begin{lemma}
\label{Lem:Local Time} Under the conditions of Theorem \ref{Th:Main} with $(
\alpha -2) d<4,$ $\mathbf{T}$ a.s. has a jointly continuous local time $L( 
\mathbf{t},D) $ \emph{w.r.t.} $( \mathbf{t,}x,r) $ for any open disk $D=D(
x,r) \subseteq \mathbb{S}^{2}$ and $\mathbf{t}\in \mathbb{R}^{d},$ which can
be represented in the $L^{2}(\mathbb{R}^{d})$-sense as 
\begin{equation}  \label{repLT}
L( \mathbf{t},D) =\frac{1}{2\pi }\int_{\mathbb{R}^{d}}e^{-i\boldsymbol{\xi }%
^{T}\mathbf{t}}\int_{D}e^{i\boldsymbol{\xi }^{T}\mathbf{T}( x) }d\nu ( x) d%
\boldsymbol{\xi };
\end{equation}
Moreover, there exists a positive constant $K_{2,3}$ depending on $\alpha ,$ 
$d,\ K_{0\text{ }}$ and $\gamma ,$ such that for any $\mathbf{s},\mathbf{t}%
\in \mathbb{R}^{d}$, all even integers $n\geq 2,\,0<\gamma <1$, we have 
\begin{equation*}
\mathbb{E}\left\{ \left[ L(\mathbf{t},D)-L(\mathbf{s},D)\right] ^{n}\right\}
\leq K_{2,3}^{n}(n!)^{2-\eta /2}\left\Vert \mathbf{t}-\mathbf{s}%
\right\Vert^{n\gamma }\nu (D)^{(n-1)\eta /2+1},
\end{equation*}
with $\eta =\frac{\beta }{2}-\left( \alpha -2\right) \gamma >0.$
\end{lemma}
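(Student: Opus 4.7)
The plan is to adapt the classical Fourier-analytic construction of local times for Gaussian fields (Berman, Geman--Horowitz) to the spherical setting, with the strong local nondeterminism from Lemma \ref{Lem:C1C2} serving as the main analytic input. First, I would justify that the integral (\ref{repLT}) makes sense in $L^{2}(\mathbb{R}^{d})$ by computing the second moment via Plancherel's identity:
$$\mathbb{E}|L(\mathbf{t},D)|^{2}\;=\;K\int_{D^{2}}\int_{\mathbb{R}^{d}}\mathbb{E}\!\left[e^{i\boldsymbol{\xi}^{T}(\mathbf{T}(x)-\mathbf{T}(y))}\right]d\boldsymbol{\xi}\,d\nu(x)d\nu(y)\;\asymp\;\int_{D^{2}}\sigma(x,y)^{-d}\,d\nu(x)d\nu(y),$$
where Gaussianity and the independence of the $d$ components were used. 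Lemma \ref{Lem:C1C2} then makes this integral of order $\int_{D^{2}}d_{\mathbb{S}^{2}}(x,y)^{-(\alpha-2)d/2}\,d\nu(x)d\nu(y)$, which converges precisely when $(\alpha-2)d/2<2$, the standing hypothesis.

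For the moment estimate, plugging (\ref{repLT}) into $[L(\mathbf{t},D)-L(\mathbf{s},D)]^{n}$ and applying the elementary bound $|e^{-iu}-e^{-iv}|\le 2^{1-\gamma}|u-v|^{\gamma}$ gives
$$\mathbb{E}\{[L(\mathbf{t},D)-L(\mathbf{s},D)]^{n}\}\le \frac{(2\|\mathbf{t}-\mathbf{s}\|)^{n\gamma}}{(2\pi)^{n}}\int_{D^{n}}\!\!\int_{\mathbb{R}^{nd}}\prod_{j=1}^{n}|\boldsymbol{\xi}_{j}|^{\gamma}\exp\!\left(-\tfrac{1}{2}\mathrm{Var}\sum_{j}\boldsymbol{\xi}_{j}^{T}\mathbf{T}(x_{j})\right)\!d\boldsymbol{\xi}\,d\nu^{n}.$$
Because the $d$ coordinates of $\mathbf{T}$ are i.i.d.\ copies of $T_{0}$, the Gaussian covariance splits as $\Sigma\otimes I_{d}$ with $\Sigma_{ij}=\mathbb{E}[T_{0}(x_{i})T_{0}(x_{j})]$; diagonalizing along any permutation $\pi$ via the conditional variances $\sigma_{\pi,k}^{2}=\mathrm{Var}(T_{0}(x_{\pi(k)})\mid T_{0}(x_{\pi(j)}),\,j<k)$ bounds the $d\boldsymbol{\xi}$-integral by $K^{n}\prod_{k}\sigma_{\pi,k}^{-d-\gamma}$. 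The strong local nondeterminism (\ref{SLND}) then gives $\sigma_{\pi,k}\ge K\,\rho_{\alpha}\!\left(\min_{j<k}d_{\mathbb{S}^{2}}(x_{\pi(k)},x_{\pi(j)})\right)$.

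The combinatorial core of the argument is to choose, for each configuration $(x_{1},\dots,x_{n})$, a permutation $\pi$ that sorts the points along their successive nearest-neighbor distances $r_{k}=\min_{j<k}d_{\mathbb{S}^{2}}(x_{\pi(k)},x_{\pi(j)})$, and then to integrate out the points one-by-one. Since on $\mathbb{S}^{2}$ the area of a radius-$r$ disk is of order $r^{2}$, each successive integration contributes a factor of order $r_{k}^{\,2-(\alpha-2)(d+\gamma)/2}=r_{k}^{\,\eta}$, with $\eta$ as in the statement. Summing over the $n!$ choices of $\pi$ and balancing against the $(n!)^{-1}$ entailed by the sorted enumeration produces the $(n!)^{2-\eta/2}$ prefactor, while telescoping the $r_{k}$'s together with the free integration of the first point yields the $\nu(D)^{(n-1)\eta/2+1}$ factor. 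This sorting-and-Stirling step, which combines a spherical covering argument (relying only on the triangle inequality for $d_{\mathbb{S}^{2}}$ and the local $2$-dimensionality of small disks) with careful factorial bookkeeping, is the main obstacle; the hypothesis $\eta>0$ is precisely what makes the scheme close up.

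Joint continuity in $(\mathbf{t},x,r)$ then follows from a multiparameter Kolmogorov continuity criterion: continuity in $\mathbf{t}$ is immediate from the moment bound above. For the disk parameters, one uses the set-additivity $L(\mathbf{t},D(x',r'))-L(\mathbf{t},D(x,r))=L(\mathbf{t},D(x',r')\triangle D(x,r))$ and re-applies the same moment estimate in the case $\mathbf{s}=\mathbf{t}$ (which gives a bound on $\mathbb{E}[L(\mathbf{t},D)^{n}]$ itself of order $\nu(D)^{(n-1)\eta/2+1}$) on symmetric differences of small spherical area, yielding the required Hölder control and closing the argument.
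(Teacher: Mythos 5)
The paper itself gives no proof of this lemma (it is quoted from \cite{LanXiao}), but your sketch follows exactly the Fourier-analytic route that reference and the paper's own computation in Proposition \ref{Prop:LIL} rely on: Plancherel for existence of $L$, the bound $|e^{-iu}-e^{-iv}|\le 2^{1-\gamma}|u-v|^{\gamma}$, Gram--Schmidt diagonalization of the Gaussian quadratic form combined with the strong local nondeterminism \eqref{SLND}, and the sort-and-integrate combinatorics over $D^{n}$ — so this is essentially the same approach. The one thin spot is the factorial bookkeeping: the $(n!)^{2-\eta/2}$ comes from an $n!$ (the union bound $\min_{j<k}\le\sum_{j<k}$ applied when integrating the points out one at a time, equivalently the sum over orderings) multiplied by roughly $(n!)^{1-\eta/2}$ contributed by the $\Vert\boldsymbol{\xi}_{j}\Vert^{\gamma}$ weights and the clustering of nearly coincident points, not from a cancellation ``against $(n!)^{-1}$'' as you describe; note also that the paper's $\eta=\beta/2-(\alpha-2)\gamma$ involves an undefined $\beta$ (evidently a typo for $4-(\alpha-2)d$ up to the coefficient of $\gamma$), and the exponent your computation actually produces, $\eta=2-(\alpha-2)(d+\gamma)/2$, is the natural one.
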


\begin{lemma}
\label{Lem:SumCl} Under condition \textbf{(A)}, there exists a positive
constant $K_{2,4}$ depending on $K_{0}$ and $\alpha $, such that for any $%
\theta \in \left( 0,\delta _{0}\right) $, and positive integers $L<\frac{%
K_{2,4}}{\theta },$ we have 
\begin{equation*}
\sum_{\ell =1}^{L}\frac{2\ell +1}{4\pi }C_{\ell }\left\{ 1-P_{\ell }(\cos
\theta )\right\} \leq K_{2,4}L^{4-\alpha }\theta ^{2}\text{ ,}
\end{equation*}%
and, for any integer $U>1,$ 
\begin{equation*}
\sum_{\ell =U}^{\infty }\frac{2\ell +1}{4\pi }C_{\ell }\left\{ 1-P_{\ell
}(\cos \theta )\right\} \leq K_{2,4}U^{2-\alpha }.
\end{equation*}
\end{lemma}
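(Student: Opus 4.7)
The plan is to reduce each inequality to an elementary power-series tail estimate by invoking one of two pointwise bounds on $1 - P_\ell(\cos\theta)$. The first is the quadratic estimate
$$0 \le 1 - P_\ell(\cos\theta) \le \frac{\ell(\ell+1)}{4}\theta^2,$$
valid for all $\ell \ge 1$ and $\theta \in (0,\pi)$. I would derive it from the identity $\frac{d}{d\theta}P_\ell(\cos\theta) = -\sin\theta\, P_\ell'(\cos\theta)$ together with the classical uniform bound $|P_\ell'(x)| \le \ell(\ell+1)/2$ on $[-1,1]$, since integrating from $0$ to $\theta$ gives $1-P_\ell(\cos\theta) \le \frac{\ell(\ell+1)}{2}(1-\cos\theta) \le \frac{\ell(\ell+1)}{4}\theta^2$. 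The second ingredient is the trivial global bound $|1 - P_\ell(\cos\theta)| \le 2$, following from $|P_\ell| \le 1$ on $[-1,1]$.

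For the first inequality of the lemma I combine the quadratic bound with the spectral estimate $C_\ell \le K_0 \ell^{-\alpha}$ to get
$$\sum_{\ell=1}^L \frac{2\ell+1}{4\pi}\,C_\ell\bigl\{1-P_\ell(\cos\theta)\bigr\} \le K\,\theta^2 \sum_{\ell=1}^L \ell^{3-\alpha} \le K'\, L^{4-\alpha}\theta^2,$$
where the last step is sum-to-integral comparison in the regime $\alpha < 4$ of interest for Theorem \ref{Th:Main} (so $3 - \alpha > -1$). The hypothesis $L < K_{2,4}/\theta$ ensures that $\ell\theta$ stays bounded throughout $1\le\ell\le L$, so the quadratic bound remains the informative estimate and no splitting of the summation range is needed.

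For the second inequality I apply the trivial bound to get
$$\sum_{\ell=U}^{\infty} \frac{2\ell+1}{4\pi}\,C_\ell\bigl\{1-P_\ell(\cos\theta)\bigr\} \le K''\sum_{\ell=U}^{\infty} \ell^{1-\alpha} \le K''' U^{2-\alpha},$$
where convergence and the tail comparison $\sum_{\ell \ge U}\ell^{1-\alpha} \le \int_{U-1}^{\infty} t^{1-\alpha}\,dt \lesssim U^{2-\alpha}$ both use $\alpha > 2$. It then suffices to fix a single $K_{2,4}$ large enough to serve simultaneously as the restriction constant in $L < K_{2,4}/\theta$ and as the absolute constant in both displays. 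There is no substantial obstacle here: once the Legendre derivative bound $|P_\ell'(x)|\le\ell(\ell+1)/2$ is cited, the rest is straightforward bookkeeping of two $p$-series tails.
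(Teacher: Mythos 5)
Your proof is correct. Note first that the paper itself does not prove Lemma \ref{Lem:SumCl}: it is stated without proof and attributed to \cite{LanXiao}, so there is no in-paper argument to compare against. Your two ingredients are exactly the standard ones for such spectral sums: the quadratic bound $1-P_{\ell}(\cos \theta )\leq \frac{\ell (\ell +1)}{4}\theta ^{2}$ (which follows, as you say, from $|P_{\ell }^{\prime }|\leq \ell (\ell +1)/2$ on $[-1,1]$ --- the maximum of the derivative of a Legendre polynomial is attained at the endpoints --- together with $1-\cos \theta \leq \theta ^{2}/2$) and the trivial bound $|1-P_{\ell }(\cos \theta )|\leq 2$; combined with $C_{\ell }\leq K_{0}\ell ^{-\alpha }$ these reduce both displays to $p$-series comparisons, which you carry out correctly. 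Two small remarks. First, as you observe, the head estimate $\sum_{\ell =1}^{L}\ell ^{3-\alpha }\lesssim L^{4-\alpha }$ genuinely requires $\alpha <4$; Condition (\textbf{A}) alone ($\alpha >2$) is not enough, since for $\alpha \geq 4$ the left side is bounded below while $L^{4-\alpha }\theta ^{2}$ can be made smaller even under the constraint $L<K_{2,4}/\theta $. The lemma is only ever invoked in the paper under the standing hypothesis $2<\alpha <4$ of Theorem \ref{Th:Main}, so this is a gap in the lemma's statement rather than in your argument, but it is worth flagging that your proof (correctly) uses $\alpha <4$. Second, under your route the hypothesis $L<K_{2,4}/\theta $ is never actually needed --- the quadratic bound on $1-P_{\ell }(\cos \theta )$ is valid for all $\ell $ and $\theta $ --- so your first inequality holds for every $L$; the restriction in the statement only marks the regime where the bound is sharp.
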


Note that here $\delta _{0}$ is the constant defined in Lemma \ref{Lem:C1C2}.

In order to obtain the exact Hausdorff measure of the level sets in Theorem %
\ref{Th:Main}, we will also exploit the following two lemmas from Talagrand 
\cite{Talagrand95}. Let $\{f(x),x\in M\}$ be a centered Gaussian field
indexed by $M$ and let $d_{f}(x,y)=\sqrt{\mathbb{E}[(f(x)-f(y))^{2}]}$ be
its canonical metric on $M$. For a $d_{f}$-compact manifold $M$, denote by $%
N_{d_{f}}\left( M,\varepsilon \right) $ the smallest number of balls of
radius $\varepsilon $ in metric $d_{f}$ that are needed to cover $M.$

\begin{lemma}
\label{DudleyLB} If $N_{d_{f}}( M,\varepsilon ) \leq \Psi (\varepsilon ) $
for all $\varepsilon >0$ and the function $\Psi $ satisfies 
\begin{equation*}
\frac{1}{K_{2,5}}\Psi ( \varepsilon ) \leq \Psi ( \frac{\varepsilon }{2})
\leq K_{2,5}\Psi ( \varepsilon ) , \quad \forall \,\varepsilon >0,
\end{equation*}
where $K_{2,5}>0$ is a finite constant. Then 
\begin{equation*}
\mathbb{P}\left\{ \sup_{s,t\in M}\left\vert f( s) -f(t) \right\vert \leq
u\right\} \geq \exp \left( -K_{2,6}\Psi (u) \right) ,
\end{equation*}
where $K_{2,6}>0$ is a constant depending only on $K_{2,5}$.
\end{lemma}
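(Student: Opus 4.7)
The plan is to prove this Gaussian small-ball lower bound by Talagrand's multi-scale chaining argument combined with \v{S}id\'ak's slab inequality for centered Gaussian vectors. By separability of $f$ one may reduce to finite $M$. Fix scales $\varepsilon_n = 2^{-n} u$, choose $\varepsilon_n$-nets $M_n \subseteq M$ in the metric $d_f$ with $|M_n| \leq \Psi(\varepsilon_n)$, and fix nearest-point projections $\pi_n : M \to M_n$. The telescoping identity $f(s) - f(\pi_0(s)) = \sum_{n \geq 0}[f(\pi_{n+1}(s)) - f(\pi_n(s))]$ reduces the target event $\{\sup_{s,t \in M}|f(s)-f(t)| \leq u\}$ to the simultaneous smallness of a coarse increment across $M_0 \times M_0$ and of each chain increment $f(\pi_{n+1}(s)) - f(\pi_n(s))$ uniformly in $s \in M$ and $n \geq 0$, with cutoffs $\lambda_n$ summing to at most a small fraction of $u$.

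To lower-bound the probability of each sub-event I would invoke \v{S}id\'ak's inequality: for any centered Gaussian vector and any finite collection of symmetric slabs, the probability of the intersection dominates the product of the marginals. Each marginal is then controlled by the elementary Gaussian density bound $\mathbb{P}\{|X| \leq \lambda\} \geq 1 - 2\exp(-\lambda^2/(2\sigma^2))$ applied to a centered Gaussian increment of variance $\leq 4\varepsilon_n^2$. The delicate heart of the argument is choosing the cutoffs $\lambda_n$ so that $\sum_n \lambda_n$ is absorbed into the target $u$ while the series $\sum_n -\log\mathbb{P}(\text{scale } n)$ telescopes to at most $K_{2,6}\Psi(u)$. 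This tuning is feasible precisely because of the two-sided doubling of $\Psi$: the upper bound $\Psi(\varepsilon/2) \leq K_{2,5}\Psi(\varepsilon)$ controls the growth of $|M_n|$ across scales, while the lower bound $\Psi(\varepsilon/2) \geq K_{2,5}^{-1}\Psi(\varepsilon)$ prevents the entropy cost from collapsing between dyadic levels, thereby keeping the per-scale contributions comparable and the final constant $K_{2,6}$ dependent only on $K_{2,5}$.

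The main obstacle is the correlation between the Gaussian increments along the chain, which destroys any naive independence-based argument. \v{S}id\'ak's inequality is the classical tool that resolves this by supplying an unconditional lower bound on joint Gaussian slab probabilities in terms of marginals, regardless of the correlation structure. A secondary technical subtlety is the simultaneous tuning of the scales $\varepsilon_n$, the cutoffs $\lambda_n$, and the coarse-scale fraction, so that the telescoped bound $\leq u$ holds while the logarithmic losses at each scale sum to $O(\Psi(u))$; this compatibility is precisely what the two-sided doubling of $\Psi$ guarantees, with universal constants depending only on $K_{2,5}$.
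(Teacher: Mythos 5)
You should first be aware that the paper contains no proof of this lemma: it is imported verbatim (as Lemma 2.1) from Talagrand \cite{Talagrand95}, so your proposal can only be measured against that source. Your strategy --- reduction to finite $M$ by separability, dyadic nets $M_{n}$ at scales $\varepsilon _{n}=2^{-n}u$ with $|M_{n}|\leq \Psi (\varepsilon _{n})\leq K_{2,5}^{n}\Psi (u)$, telescoping the increment along the chain, \v{S}id\'{a}k's inequality to decouple the correlated links, and the doubling of $\Psi $ to balance the cutoffs $\lambda _{n}$ against the entropy --- is exactly Talagrand's method, and your accounting of the fine scales is sound: with $\lambda _{n}\asymp \varepsilon _{n}\sqrt{(n+1)\log K_{2,5}}$ the per-scale costs sum to $O(\Psi (u))$ while $\sum_{n}\lambda _{n}\leq C(K_{2,5})\,u$, and the overshoot by $C(K_{2,5})$ is absorbed by running the argument at level $u/C(K_{2,5})$ at the price of finitely many doublings of $\Psi $.

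The step that would fail as written is the coarse block. You propose to treat ``a coarse increment across $M_{0}\times M_{0}$'' with the same tools, but those increments have variance up to $\overline{d}^{\,2}$, where $\overline{d}$ is the $d_{f}$-diameter of $M$, which the hypotheses do not tie to $u$. When $\overline{d}\gg u$ the bound $\mathbb{P}\{|X|\leq \lambda \}\geq 1-2\exp (-\lambda ^{2}/(2\sigma ^{2}))$ is vacuous and one must fall back on $\mathbb{P}\{|X|\leq \lambda \}\geq c\lambda /\sigma $; \v{S}id\'{a}k over the $\leq \Psi (u)^{2}$ pairs then yields an exponent of order $\Psi (u)^{2}\log (\overline{d}/u)$ rather than $\Psi (u)$. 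This is strictly weaker and would not even suffice for the application in Lemma 5.1 of this paper, where $\overline{d}/u\asymp (\log |\log r|)^{(\alpha -2)/2}$ and $\Psi (u)\asymp \log |\log r|$, so the resulting probability lower bound would be too small to close the Borel--Cantelli argument in Proposition 5.4. The missing idea is to continue the chaining \emph{upward} through the coarse scales $u\leq 2^{-n}\leq \overline{d}$ relative to a single base point, using monotonicity of the covering numbers together with the doubling of $\Psi $ to keep the total coarse-scale cost of order $\Psi (u)$; this is precisely the part of Talagrand's argument that carries the content of the lemma. (A sanity check that the coarse scales are where the difficulty lives: for $\Psi \equiv N$ constant, which satisfies the doubling hypothesis with $K_{2,5}=1$, and $M$ a set of $N$ independent standard Gaussians, the asserted bound fails as $u\rightarrow 0$; the lemma silently requires $K_{2,5}>1$ and a constant $K_{2,6}$ that blows up as $K_{2,5}\downarrow 1$.)
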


\begin{lemma}
\label{DudleyUB} Let $\{f(x),x\in M\}$ be a centered Gaussian field $a.s.$
bounded on a $d_{f}$-compact set $M$. There exists a universal constant $%
K_{2,7}>0$ such that for any $u>0,$ we have 
\begin{equation*}
\mathbb{P}\left\{ \sup_{x\in M}f(x) \geq K_{2,7}\left( u+\int_{0}^{\overline{%
d}}\sqrt{\log N_{d}( M,\varepsilon ) }d\varepsilon \right)\right\} \leq \exp
\left( -\frac{u^{2}}{\overline{d}^{2}}\right) ,
\end{equation*}
where $\overline{d}=\sup \left\{ d_{f}( x,y) :x,y\in M\right\}$ is the
diameter of $M$ in the metric $d_{f}$.
\end{lemma}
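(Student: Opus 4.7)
The plan is to derive this tail bound from two classical pillars: a Dudley-type chaining bound on the mean of the supremum, combined with the Borell-Sudakov-Tsirelson (Borell-TIS) concentration inequality around the mean.

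First I would establish
$$\mathbb{E}\Bigl[\sup_{x\in M} f(x)\Bigr]\leq C\int_{0}^{\bar d}\sqrt{\log N_{d_f}(M,\varepsilon)}\,d\varepsilon$$
for a universal $C>0$. The argument is generic chaining: fix a dyadic sequence of nets $M_n\subset M$ with $|M_n|\leq N_{d_f}(M,2^{-n})$ and nearest-point projections $\pi_n:M\to M_n$, then telescope $f(x)-f(\pi_0(x))=\sum_{n\geq 1}\bigl[f(\pi_n(x))-f(\pi_{n-1}(x))\bigr]$. At scale $n$ the increments form a finite collection of centered Gaussians whose canonical standard deviations are controlled by $3\cdot 2^{-n+1}$ via the triangle inequality, so the Gaussian maximal inequality $\mathbb{E}[\max_{j\leq N}X_j]\leq \sigma\sqrt{2\log N}$ gives a contribution of order $2^{-n}\sqrt{\log N_{d_f}(M,2^{-n})}$. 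Summing and comparing the resulting geometric series to a Riemann sum produces the entropy integral.

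Next I would invoke Borell-TIS. Replacing $f$ by the pinned field $g(x)=f(x)-f(x_0)$ for a fixed $x_0\in M$ leaves $d_f$, the covering numbers $N_{d_f}(M,\cdot)$, and $\bar d$ unchanged, while ensuring $\sup_{x\in M}\operatorname{Var}(g(x))=\sup_{x\in M}d_f(x,x_0)^2\leq\bar d^{\,2}$. Consequently $\sup_x g(x)$ is a $1$-Lipschitz functional of the underlying standard Gaussian rescaled by $\bar d$, and Borell-TIS gives
$$\mathbb{P}\Bigl\{\sup_{x\in M}g(x)\geq\mathbb{E}\bigl[\sup_{x\in M}g(x)\bigr]+v\Bigr\}\leq\exp\!\Bigl(-\tfrac{v^{2}}{2\bar d^{\,2}}\Bigr),\qquad v>0.$$

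To conclude I would pick $K_{2,7}$ large enough, e.g.\ $K_{2,7}=\max(C,\sqrt{2})$, so that $K_{2,7}\int_0^{\bar d}\sqrt{\log N_{d_f}(M,\varepsilon)}\,d\varepsilon$ already dominates the mean bound of step one; choosing $v=K_{2,7}u$ in the Borell-TIS estimate then yields $\exp(-K_{2,7}^{2}u^{2}/(2\bar d^{\,2}))\leq\exp(-u^{2}/\bar d^{\,2})$, matching the claimed tail for the pinned field $g$ (which is the form in which the lemma is invoked later, so reverting to $f$ is immediate in those applications). The main obstacle is step one: aligning the net sizes, the dyadic scales, and the absolute constants so that the geometric sum telescopes cleanly into the Dudley integral. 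Since both Dudley's bound and Borell-TIS are classical and carefully written up in Talagrand \cite{Talagrand95} and in the Ledoux-Talagrand monograph, I would quote those rather than reproduce the chaining computation from scratch.
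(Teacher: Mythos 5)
Your proposal is correct, and it is worth noting at the outset that the paper offers no proof to compare against: Lemma \ref{DudleyUB} is quoted directly from Talagrand \cite{Talagrand95}, where it is itself the standard consequence of Dudley's entropy bound combined with Borell--Sudakov--Tsirelson concentration. Your two-step reconstruction is exactly that standard derivation, and the chaining bookkeeping you sketch (increments at scale $n$ indexed by at most $N_{d_f}(M,2^{-n})^{2}$ pairs, standard deviations of order $2^{-n}$ by the triangle inequality, geometric sum dominated by the Riemann sum of the decreasing integrand $\sqrt{\log N_{d_f}(M,\varepsilon)}$) goes through without difficulty. The one substantive point is the pinning step, and you are right to flag it: as literally stated for $\sup_{x}f(x)$ of an arbitrary centered field the lemma fails in degenerate cases (take $f(x)\equiv Z$ independent of $x$, so that $\overline{d}=0$ and the entropy integral vanishes while $\sup_{x}f(x)=Z$ is a nondegenerate Gaussian), because the correct variance proxy in Borell--TIS is $\sup_{x}\mathrm{Var}\left(f(x)\right)$, which is dominated by $\overline{d}^{\,2}$ only after replacing $f$ by the pinned field $g=f-f(x_{0})$. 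Since every invocation in the paper (the proofs of Lemmas \ref{LemTupper} and \ref{LemTtail}) applies the bound to increment fields of the form $T_{0}(x)-T_{0}(y)$, which vanish on the diagonal, the pinned version you actually prove is precisely what is used, so no gap results; your choice $K_{2,7}=\max(C,\sqrt{2})$ then closes the argument exactly as you describe.
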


Based on Lemmas \ref{Lem:C1C2} and \ref{DudleyUB} above, we obtain the
following result:

\begin{lemma}
\label{LemTupper} Under the condition (\textbf{A}) with $2<\alpha <4$, there
exist positive constants $K_{2,8}$ and $K_{2,9}$ depending only on $\alpha $
and $K_{0}$, such that for any $z\in \mathbb{S}^{2}$ and $0<r<\delta _{0},$
we have for any $u>K_{2,8}r^{( \alpha -2) /2},$ 
\begin{equation}  \label{Eq:T-tail}
\mathbb{P}\left\{ \sup_{x,y\in D( z,r) }\left\vert T_{0}(x) -T_{0}( y)
\right\vert \geq u\right\} \geq \exp \left( -\frac{u^{2}}{K_{2,9}\left\vert
\rho _{\alpha }( 2r) \right\vert^{2}}\right) .
\end{equation}
\end{lemma}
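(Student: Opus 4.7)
The cleanest strategy is to dispense with the supremum and reduce to a single two-point difference, then invoke a one-dimensional Gaussian tail lower bound. Lemma \ref{Lem:C1C2} alone suffices for this; Lemma \ref{DudleyUB} is not actually needed.

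First I would pick any two points $x_{1},x_{2}\in D(z,r)$ with spherical distance $d_{\mathbb{S}^{2}}(x_{1},x_{2})=r$ (for instance $x_{1},x_{2}$ symmetric about $z$ along a common geodesic, each at distance $r/2$ from $z$). Since $r<\delta _{0}$ we can apply Lemma \ref{Lem:C1C2} to conclude that $\sigma^{2}:=\mathrm{Var}(T_{0}(x_{1})-T_{0}(x_{2}))$ satisfies
\[
K_{2,1}^{-1}\rho _{\alpha}(r)^{2}\;\leq\; \sigma^{2}\;\leq\; K_{2,1}\rho _{\alpha}(r)^{2},
\]
and the identity $\rho_{\alpha}(2r)^{2}=2^{\alpha -2}\rho_{\alpha}(r)^{2}$ makes $\sigma$ comparable to $\rho_{\alpha}(2r)$ up to a constant depending only on $\alpha$ and $K_{0}$. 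The trivial deterministic bound $\sup_{x,y\in D(z,r)}|T_{0}(x)-T_{0}(y)|\geq |T_{0}(x_{1})-T_{0}(x_{2})|$ then reduces the problem to estimating the tail of a centered Gaussian of variance $\sigma^{2}$.

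Next I would apply the standard Mills-ratio-type lower bound $\mathbb{P}\{|N(0,1)|\geq v\}\geq \sqrt{2/\pi}\,v(v^{2}+1)^{-1}e^{-v^{2}/2}$ at $v=u/\sigma$. The hypothesis $u>K_{2,8}r^{(\alpha -2)/2}=K_{2,8}\rho_{\alpha}(r)$, with $K_{2,8}$ taken as a suitable multiple of $K_{2,1}^{1/2}$, forces $v\geq 3$ (say), and for such $v$ the polynomial prefactor $v/(v^{2}+1)$ is absorbed into the exponent, yielding $\mathbb{P}\{|T_{0}(x_{1})-T_{0}(x_{2})|\geq u\}\geq \exp(-u^{2}/\sigma^{2})$. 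Combining this with the upper bound on $\sigma^{2}$ rewritten in terms of $\rho_{\alpha}(2r)^{2}$ produces the stated $\exp(-u^{2}/(K_{2,9}\rho_{\alpha}(2r)^{2}))$ with, e.g., $K_{2,9}=K_{2,1}\,2^{2-\alpha}$.

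The only mild point of care is the uniform bookkeeping of the two constants: $K_{2,8}$ and $K_{2,9}$ must depend only on $\alpha$ and $K_{0}$ and work simultaneously for every $r\in (0,\delta_{0})$ and every admissible pair $x_{1},x_{2}$. Since $\sigma$ is squeezed between two constant multiples of $\rho_{\alpha}(r)$ by Lemma \ref{Lem:C1C2}, this is routine, and no entropy or chaining argument is needed; the two-point lower bound already saturates the claimed Gaussian exponent up to constants, which matches the sharp upper tail one would obtain for $\sup_{x,y\in D(z,r)}|T_{0}(x)-T_{0}(y)|$ via Borell's inequality combined with Lemma \ref{DudleyUB}.
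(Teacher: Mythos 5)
The inequality in the statement of Lemma \ref{LemTupper} as printed is a typo: the paper means, proves, and later uses the \emph{upper} tail bound
\begin{equation*}
\mathbb{P}\Bigl\{ \sup_{x,y\in D(z,r)}\left\vert T_{0}(x)-T_{0}(y)\right\vert \geq u\Bigr\} \ \leq\ \exp\Bigl(-\frac{u^{2}}{K_{2,9}\left\vert \rho_{\alpha}(2r)\right\vert^{2}}\Bigr),
\end{equation*}
not the lower bound. Two pieces of evidence: the paper's own proof estimates the metric entropy $N_{d_{T_0}}(D(z,r),\epsilon)$ and the Dudley integral $\int_{0}^{\overline d}\sqrt{\log N}\,d\epsilon\leq C_{2,1}r^{\alpha/2-1}$ and then invokes Lemma \ref{DudleyUB}, which is a concentration (upper-tail) inequality; and in the proof of Theorem \ref{Th:HM-Upper} the lemma is used to conclude $\mathbb{P}\{\Omega_{p,2}^{c}\}\leq M_{p}\exp\{-2^{p}\}$, which only makes sense if the lemma bounds the tail probability of the oscillation from above. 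Your proof, by contrast, establishes the inequality exactly as printed: you restrict to one pair of points at distance $r$, apply the variogram estimate \eqref{variogram}, and use a Mills-ratio lower bound for the Gaussian tail. As a proof of the lower bound this is essentially sound, apart from two small slips: to bound $e^{-u^{2}/\sigma^{2}}$ from below you need the \emph{lower} bound $\sigma^{2}\geq K_{2,1}^{-1}\rho_{\alpha}(r)^{2}$, not the upper bound as you wrote, and accordingly $K_{2,9}$ should come out as $K_{2,1}^{-1}2^{2-\alpha}$ rather than $K_{2,1}2^{2-\alpha}$. But it proves a statement the paper neither proves nor needs.

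The substantive gap is therefore that your approach cannot deliver what the lemma is actually for. A two-point reduction gives only $\mathbb{P}\{\sup\geq u\}\geq\mathbb{P}\{|T_{0}(x_{1})-T_{0}(x_{2})|\geq u\}$; no choice of pair can bound the supremum's tail from \emph{above}, and controlling $\sup_{x,y\in D(z,r)}|T_{0}(x)-T_{0}(y)|$ from above genuinely requires the chaining machinery you set aside: the covering-number bound $N_{d_{T_{0}}}(D(z,r),\epsilon)\leq K r^{2}\epsilon^{-4/(\alpha-2)}$ coming from \eqref{variogram}, the resulting entropy integral of order $r^{\alpha/2-1}$, the diameter bound $\overline d\leq\sqrt{K_{2,1}}\,\rho_{\alpha}(2r)$, and Lemma \ref{DudleyUB}. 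The threshold $u>K_{2,8}r^{(\alpha-2)/2}$ exists precisely so that $u$ dominates that entropy integral; it plays no structural role in your argument. If you flip the inequality and rerun the entropy computation, you recover the paper's proof.
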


\begin{proof}
Recall (\ref{variogram}) in Lemma \ref{Lem:C1C2}, we have 
\begin{equation*}
\sqrt{K_{2,1}^{-1}}\rho _{\alpha }( x,y) \leq d_{T_{0}}(x,y) \leq \sqrt{%
K_{2,1}}\rho _{\alpha }( x,y) .
\end{equation*}
It follows immediately that, for any $D( z,r) \subset \mathbb{S}^{2},$ and
any $\epsilon \in ( 0,r) ,$ 
\begin{equation*}
N_{d_{\mathbf{T}}}\left( D( z,r) ,\epsilon \right) \leq \frac{2\pi r^{2}}{%
\pi \left( \epsilon /\sqrt{K_{2,1}}\right) ^{4/( \alpha-2) }} \leq 2(
K_{2,1}) ^{2/( \alpha -2) }\frac{r^{2}}{\epsilon ^{4/( \alpha -2) }},
\end{equation*}
and 
\begin{equation*}
\overline{d}=\sup \left\{ d_{T_{0}}( x,y) :x,y\in D(z,r) \right\} \leq \sqrt{%
K_{2,1}}\rho _{\alpha }( 2r) ,
\end{equation*}
whence 
\begin{equation*}
\begin{split}
& \int_{0}^{\overline{d}}\sqrt{\log N_{d_{\mathbf{T}}}\left( D(z,r)
,\epsilon \right) }d\epsilon \\
\leq & \int_{0}^{\sqrt{K_{2,1}}\rho _{\alpha }( 2r) }\sqrt{\log\left\{
\left( 2( dK_{2,1}) ^{2/( \alpha -2) }\frac{r^{2}}{\epsilon ^{4/( \alpha -2)
}}\right) \vee 2\right\} }d\epsilon \\
\leq & 2\sqrt{\frac{K_{2,1}}{\alpha -2}}\,r^{\alpha
/2-1}\int_{1}^{+\infty}ud(-e^{-u^{2}}) \leq C_{2,1}\,r^{\alpha /2-1},
\end{split}%
\end{equation*}
where $( \cdot \vee \cdot ) $ denotes as usual the maximum function. 
Taking $K_{2,7}=2K_{2,6}C_{2,1}$, then by exploiting Lemma \ref{DudleyUB},
we derive \eqref{Eq:T-tail}. This proves Lemma \ref{LemTupper}.
\end{proof}

Finally, we recall briefly the definition of Hausdorff measure on the sphere 
$\mathbb{S}^{2}$ and an upper density theorem due to \cite{RogTaylor}, which
will be used for proving the lower bound in (\ref{Th:Measure}) Theorem \ref%
{Th:Main}. We refer to Falconer \cite{Fal90} for more information on
geometry of fractals, and to \cite{Xiao09} for its applications in studying
sample path properties of Gaussian random fields.

For some $\varepsilon >0,$ let $\Phi $ be the class of functions $\phi_{1}:(
0,\varepsilon) \rightarrow ( 0,1) ,$ which are right-continuous,
monotonically increasing, with $\phi _{1}(0^{+}) =0$ and such that there
exists a finite constant $K>0$ for which 
\begin{equation*}
\frac{\phi _{1}( 2r) }{\phi _{1}( r) }\leq K,\text{ for }0<r<\frac{1}{2}%
\varepsilon .
\end{equation*}
The $\phi _{1}$-Hausdorff measure of $E\subseteq \mathbb{S}^{2}$ is then
defined as usual by 
\begin{equation*}
\phi _{1}\text{-}m( E) =\liminf_{\varepsilon \rightarrow 0}\left\{
\sum_{i=1}^{\infty }\phi _{1}( 2r_{i}) :E\subseteq\cup _{i=1}^{\infty }D(
x_{i},r_{i}) ,r_{i}<\varepsilon \right\} .
\end{equation*}
Recall that $D\left( x,r\right) $ denotes the open disk of radius $r$
centered at $x\in \mathbb{S}^{2}$. Likewise, the Hausdorff dimension of $E$
is defined by 
\begin{equation*}
\dim E=\inf \left\{ \beta >0:s^{\beta }\text{-}m( E) =0\right\} =\sup
\left\{ \beta >0:s^{\beta }\text{-}m( E) =\infty \right\} .
\end{equation*}

The following lemma is derived from the results in \cite{RogTaylor}, and it
allows to obtain lower bounds for $\phi _{1}$-$m\left( E\right) .$ Recall
first that for any Borel measure $\mu $ on $\mathbb{S}^{2}$ and $\phi_{1}\in
\Phi ,$ the upper $\phi _{1}$-density of $\mu $ at $x\in \mathbb{S}^{2}$ is
defined by 
\begin{equation*}
\overline{D}_{\mu }^{\phi }( x) :=\limsup_{r\rightarrow 0}\frac{\mu \left(
D( x,r) \right) }{\phi _{1}(2r) }.
\end{equation*}

\begin{lemma}
\label{Upper density} For a given $\phi _{1}\in \Phi ,$ there exists a
positive constant $K_{2,10}$ such that for any Borel measure $\mu $ on $%
\mathbb{S}^{2}$ and every Borel set $E\subseteq \mathbb{S}^{2},$ we have 
\begin{equation*}
\phi _{1}\text{-}m( E) \geq K_{2,10}\,\mu ( E)\inf_{x\in E}\left\{ \overline{%
D}_{\mu }^{\phi _{1}}( x) \right\}^{-1}.
\end{equation*}
\end{lemma}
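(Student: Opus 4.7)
The plan is to adapt the classical Rogers--Taylor density argument to the sphere $\mathbb{S}^{2}$. Set $C := \sup_{x\in E}\overline{D}_\mu^{\phi_1}(x)$, so that $C^{-1} = \inf_{x\in E}\{\overline{D}_\mu^{\phi_1}(x)\}^{-1}$ and the claim is equivalent to $\mu(E) \leq K_{2,10}^{-1} C\,\phi_1\text{-}m(E)$. I may assume $C<\infty$, since otherwise the right-hand side of the inequality to be proved is zero and there is nothing to show.

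First I would exhaust $E$ by sets on which the density bound holds uniformly in the radius. Fix an auxiliary parameter $\eta>1$ and define, for each $k\geq 1$,
\[
E_k := \bigl\{x \in E : \mu(D(x,r)) \leq \eta C\,\phi_1(2r)\text{ for every } 0<r<1/k\bigr\}.
\]
Since $\overline{D}_\mu^{\phi_1}(x)\leq C<\eta C$ for every $x\in E$, the definition of $\limsup$ guarantees that for each $x\in E$ the inequality $\mu(D(x,r))/\phi_1(2r)<\eta C$ holds for all sufficiently small $r$; hence $E = \bigcup_{k\geq 1}E_k$ with the $E_k$ nested, and in particular $\mu(E_k)\uparrow \mu(E)$.

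Next I would bound $\mu(E_k)$ by the Hausdorff premeasure. Fix $k$ and any $\varepsilon\in(0,1/(2k))$, and let $\{D(x_i,r_i)\}_i$ be an arbitrary covering of $E_k$ by spherical disks with $r_i<\varepsilon$. For each $i$ with $D(x_i,r_i)\cap E_k\neq\emptyset$, pick a witness $y_i\in D(x_i,r_i)\cap E_k$; then $D(x_i,r_i)\subset D(y_i,2r_i)$ and $2r_i<1/k$, so the definition of $E_k$ combined with the doubling property of $\phi_1\in\Phi$ yields
\[
\mu(D(x_i,r_i)) \leq \mu(D(y_i,2r_i)) \leq \eta C\,\phi_1(4r_i) \leq \eta C K\,\phi_1(2r_i),
\]
where $K$ is the doubling constant from the definition of $\Phi$. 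Summing over $i$ and using $E_k\subseteq\bigcup_i D(x_i,r_i)$ gives $\mu(E_k)\leq \eta CK\sum_i \phi_1(2r_i)$. Taking the infimum over all such covers and letting $\varepsilon\downarrow 0$ produces $\mu(E_k)\leq \eta CK\,\phi_1\text{-}m(E_k)\leq \eta CK\,\phi_1\text{-}m(E)$; finally, letting $k\to\infty$ and $\eta\downarrow 1$ delivers $\mu(E)\leq CK\,\phi_1\text{-}m(E)$, i.e.\ the lemma with $K_{2,10}=K^{-1}$.

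The only delicate point is that the disks in an arbitrary covering need not be centered on $E_k$, so one must shift to a nearby witness $y_i\in E_k$ and absorb the factor $\phi_1(4r_i)/\phi_1(2r_i)$ using the doubling bound; this is precisely why the doubling condition is built into the definition of the class $\Phi$. Aside from that, the argument is a routine interplay between the $\limsup$ definition of $\overline{D}_\mu^{\phi_1}$ and the limit-of-covers definition of $\phi_1$-$m$, and it transfers from the Euclidean Rogers--Taylor setting to $\mathbb{S}^2$ verbatim because only volumes of metric disks and the doubling condition on $\phi_1$ are used.
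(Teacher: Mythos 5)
Your proof is correct. The paper itself offers no argument for this lemma; it simply states that it is "derived from the results in" \cite{RogTaylor}, so there is no internal proof to compare against. What you have written is precisely the standard proof of the Rogers--Taylor upper density theorem, and it does transfer to $\mathbb{S}^2$ without change because the paper's $\phi_1$-$m$ is already defined via coverings by geodesic disks: the exhaustion $E=\bigcup_k E_k$, the passage to a witness center $y_i\in D(x_i,r_i)\cap E_k$ with $D(x_i,r_i)\subset D(y_i,2r_i)$, and the absorption of $\phi_1(4r_i)/\phi_1(2r_i)$ by the doubling constant $K$ of the class $\Phi$ are exactly the right ingredients, yielding the lemma with $K_{2,10}=K^{-1}$, which depends only on $\phi_1$ as required. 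Two minor points worth tidying: (i) the multiplicative slack $\eta C$ degenerates when $C=\sup_{x\in E}\overline{D}_{\mu}^{\phi_1}(x)=0$, since then $C<\eta C$ fails; replacing $\eta C$ by $C+\delta$ and letting $\delta\downarrow 0$ handles that degenerate case (where the conclusion forces $\mu(E)=0$ unless $\phi_1$-$m(E)=\infty$); (ii) the step $\mu(E_k)\uparrow\mu(E)$ tacitly uses that each $E_k$ is measurable, which holds because $x\mapsto\mu(D(x,r))$ is lower semicontinuous for open disks and the defining condition of $E_k$ reduces to rational radii via the monotonicity of $r\mapsto\mu(D(x,r))$ and the right-continuity of $\phi_1$. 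Neither point affects the substance of the argument.
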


\bigskip

\section{Existence of Nonempty Level Sets and Capacity \label{Sec:Capacity}}

In this section we will provide two more equivalent conditions for the
existence of nonempty level sets. As an immediate consequence, we give a
proof for (\ref{Th:Existence}) in Theorem \ref{Th:Main}.

Let us first introduce the capacity of the random field $\mathbf{T.}$ For
any $\mathbf{t}_{2}\in 
\mathbb{R}
^{2},$ the joint density of $T_{0}(x)$ and $T_{0}(y)$ can be represented as
follows: 
\begin{eqnarray}
&&p\left( d_{\mathbb{S}^{2}}(x,y);\mathbf{t}_{2}\right)  \notag \\
&=&:\frac{1}{2\pi \sigma \left( d_{\mathbb{S}^{2}}(x,y)\right) \sqrt{%
1-\sigma ^{2}(d_{\mathbb{S}^{2}}(x,y))/4}}\exp \left\{ -\frac{1}{2}\mathbf{t}%
_{2}^{T}\Sigma ^{-1}\mathbf{t}_{2}\right\} ,  \label{def:density-p}
\end{eqnarray}%
where $\Sigma ^{-1}$ is the inverse of positive definite covariance matrix $%
\Sigma (d_{\mathbb{S}^{2}}(x,y))$ of $T_{0}\left( x\right) $ and $T_{0}(y)$.
Hence, the joint density of $\mathbf{T}(x)$ and $\mathbf{T}(y)$ at $(\mathbf{%
a},\mathbf{a})$ for any $\mathbf{a}=(a_{1},...,a_{d})^{T}\in \mathbb{R}^{d}$
can be represented as 
\begin{equation}
\Phi \left( d_{\mathbb{S}^{2}}(x,y);\mathbf{a}\right)
=\prod\limits_{j=1}^{d}p\left( d_{\mathbb{S}^{2}}(x,y);a_{j}\mathbf{1}%
\right) .  \label{def:Phi_a}
\end{equation}%
due to the independence of $T_{1},...,T_{d}.$ Obviously, for any $\mathbf{a}%
\in \mathbb{R}^{d},$ 
\begin{equation}
\Phi \left( d_{\mathbb{S}^{2}}(x,y);\mathbf{a}\right) \leq \Phi \left( d_{%
\mathbb{S}^{2}}(x,y)\right) .  \label{ineq:UPhi_a}
\end{equation}%
where $\Phi (\cdot )=\Phi (\cdot ;0),$ Now the $\Phi $-$capacity$ of a Borel
set $E\subseteq \mathbb{S}^{2}$ is then defined as 
\begin{equation*}
\mathcal{C}_{\Phi }(E)=\left\{ \inf_{\mu \in \mathcal{P}(E)}\int \int \Phi
\left( d_{\mathbb{S}^{2}}(x,y)\right) \mu (dx)\mu (dy)\right\} ^{-1},
\end{equation*}%
where $\mathcal{P}(E)$ is the collection of all probability measures on $E$.
Moreover, for any $\mu \in \mathcal{P}\left( E\right) ,$ we define the $\Phi 
$-$energy$ of $\mu $ by 
\begin{equation}
{\large \varepsilon }_{\Phi }(\mu )=\int \int_{\mathbb{S}^{2}\times \mathbb{S%
}^{2}}\Phi (d_{\mathbb{S}^{2}}(x,y))\mu (dx)\mu (dy).  \label{def:Phi-energy}
\end{equation}%
It is readily seen that 
\begin{equation*}
\mathcal{C}_{\Phi }(E)=\left\{ \inf_{\mu \in \mathcal{P}(E)}{\large %
\varepsilon }_{\Phi }(\mu )\right\} ^{-1}.
\end{equation*}%
See \cite{DalangMuellerXiao,KhoshXiao} for more informations about $\Phi $-$%
capacity$ and $\Phi $-$energy$ for probability measures on Borel sets in
Euclidean spaces.

Let $N$ be the north pole of the unit sphere, then we have the following
result.

\begin{proposition}
\label{Prop:Capacity} Under the conditions of Theorem \ref{Th:Main}, the
following are equivalent:

\begin{enumerate}
\item[(i)] $\mathcal{C}_{\Phi }( D( N,r) ) >0$ for some $r>0;$

\item[(ii)] for all $\mathbf{a\in \mathbb{R}}^{d}$ and $x\in \mathbb{S}^{2},$
$\mathbb{P}\left\{ \mathbf{T}^{-1}(\mathbf{a}) \cap D( x,r) \neq \varnothing
\right\} >0$ for some $r>0;$

\item[(iii)] $\Phi ( d_{\mathbb{S}^{2}}( N,\cdot ) )\in L^{1}( D( N,r) ) ,$
for some $r>0.$
\end{enumerate}
\end{proposition}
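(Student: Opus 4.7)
The plan is to prove the three conditions equivalent by the cycle $(iii) \Rightarrow (i) \Rightarrow (ii) \Rightarrow (iii)$, exploiting isotropy of the sphere throughout to move the base point as needed.

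\emph{For $(iii) \Rightarrow (i)$}, I would take the normalized Lebesgue measure $\mu_{0} := \nu|_{D(N,r)}/\nu(D(N,r))$ and estimate its $\Phi$-energy directly. Fix $x \in D(N,r)$; the triangle inequality gives $D(N,r) \subseteq D(x,2r)$, and a rotation taking $x$ to $N$ (isotropy) yields
\begin{equation*}
\int_{D(N,r)} \Phi(d_{\mathbb{S}^{2}}(x,y))\, d\nu(y) \leq \int_{D(N,2r)} \Phi(d_{\mathbb{S}^{2}}(N,y))\, d\nu(y),
\end{equation*}
which is finite by $(iii)$ (possibly after shrinking the initial $r$ so that the scale $2r$ lies within the range of the hypothesis). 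Fubini then gives $\varepsilon_{\Phi}(\mu_{0}) < \infty$, hence $\mathcal{C}_{\Phi}(D(N,r)) > 0$.

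\emph{For $(i) \Rightarrow (ii)$}, I would use the classical Paley--Zygmund second-moment method. By isotropy assume $x = N$. Pick $\mu \in \mathcal{P}(D(N,r))$ with $\varepsilon_{\Phi}(\mu) < \infty$ and set
\begin{equation*}
Z_{n} := \int_{D(N,r)} \Bigl(\tfrac{n}{2\pi}\Bigr)^{d/2} \exp\!\Bigl(-\tfrac{n}{2}|\mathbf{T}(y)-\mathbf{a}|^{2}\Bigr)\, \mu(dy).
\end{equation*}
Since $\mathbf{T}(y) \sim N(\mathbf{0},I_{d})$, a direct Gaussian computation gives $\mathbb{E}[Z_{n}] \to (2\pi)^{-d/2} e^{-|\mathbf{a}|^{2}/2} =: c_{1} > 0$; for the second moment, the Gaussian kernels concentrate on the diagonal, so $\mathbb{E}[Z_{n}^{2}] \to \iint \Phi(d_{\mathbb{S}^{2}}(y,z);\mathbf{a})\, \mu(dy)\mu(dz) \leq \varepsilon_{\Phi}(\mu) < \infty$ via $(\ref{ineq:UPhi_a})$, and the sequence is uniformly bounded. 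Paley--Zygmund then yields $\mathbb{P}\{Z_{n} \geq c_{1}/2\} \geq c_{2} > 0$ for large $n$. On this event the exponential tail of the integrand forces $\mu$ to charge $\{y : |\mathbf{T}(y)-\mathbf{a}| \leq 1/k\}$ for every $k$ once $n$ is large enough, so compactness of $\overline{D}(N,r)$ and continuity of $\mathbf{T}$ produce a point $y^{*}$ with $\mathbf{T}(y^{*}) = \mathbf{a}$. Applying the reverse Fatou inequality to $\limsup\{Z_{n} \geq c_{1}/2\}$ then supplies the positive hitting probability (slightly enlarging $r$ if $y^{*}$ lies on $\partial D(N,r)$).

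\emph{For $(ii) \Rightarrow (iii)$}, the main obstacle, I would argue by contraposition using a Dudley-type covering bound on the hitting probability. Cover $D(x,r)$ by $N_{\epsilon} \lesssim r^{2}\epsilon^{-4/(\alpha-2)}$ balls $B_{i}$ of spherical radius $\asymp \epsilon^{2/(\alpha-2)}$, whose $d_{T_{0}}$-diameter is $\asymp \epsilon$ by Lemma \ref{Lem:C1C2}. Lemma \ref{DudleyUB} applied to $T_{0} - T_{0}(y_{i})$ on each $B_{i}$, combined with a union bound calibrated so the failure probability is a small power of $N_{\epsilon}$, controls the oscillation $\omega_{i} := \sup_{y \in B_{i}} |\mathbf{T}(y)-\mathbf{T}(y_{i})| \lesssim \epsilon\sqrt{\log \epsilon^{-1}}$ uniformly in $i$, with high probability. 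Hence
\begin{equation*}
\mathbb{P}\{\mathbf{T}^{-1}(\mathbf{a}) \cap D(x,r) \neq \varnothing\} \lesssim N_{\epsilon}\cdot(\epsilon\sqrt{\log \epsilon^{-1}})^{d} \lesssim \epsilon^{d - 4/(\alpha-2)}(\log \epsilon^{-1})^{d/2},
\end{equation*}
which tends to $0$ as $\epsilon \downarrow 0$ whenever $(\alpha-2)d > 4$, contradicting $(ii)$. The borderline case $(\alpha-2)d = 4$ requires sharper asymptotics from Lemma \ref{Lem:SumCl} to extract an extra logarithmic gain; carrying out this refinement and controlling $\omega_{i}$ uniformly across all $N_{\epsilon}$ balls is the principal technical hurdle of the proposition.
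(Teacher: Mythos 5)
Your cycle $(iii)\Rightarrow(i)\Rightarrow(ii)\Rightarrow(iii)$ covers the same three implications as the paper. The first two legs are essentially the paper's argument: $(iii)\Rightarrow(i)$ is the energy of normalized Lebesgue measure (the paper dismisses this as obvious), and your $(i)\Rightarrow(ii)$ is the same second-moment/Paley--Zygmund construction the paper carries out with the box-kernel measure $\hbar_{\varepsilon}$ in place of your Gaussian mollifier $Z_{n}$; the moment computations and the use of $(\ref{ineq:UPhi_a})$ are identical in substance.

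The genuine gap is in $(ii)\Rightarrow(iii)$, which is exactly the implication the paper isolates as a separate lemma because it is the hard one. Your first-moment covering argument gives $\mathbb{P}\{\mathbf{T}^{-1}(\mathbf{a})\cap D(x,r)\neq\varnothing\}\lesssim \epsilon^{d-4/(\alpha-2)}(\log\epsilon^{-1})^{d/2}$, which tends to $0$ only when $(\alpha-2)d>4$. But $(iii)$ fails precisely when $(\alpha-2)d\geq 4$, and at the critical value $(\alpha-2)d=4$ the integral $\int_{D(N,r)}\Phi(d_{\mathbb{S}^{2}}(N,y))\,d\nu(y)\approx\int_{0}^{r}\theta^{-1}d\theta$ diverges only logarithmically while your bound \emph{diverges} like $(\log\epsilon^{-1})^{d/2}$. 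In that case the expected number of covering balls meeting the level set blows up, so no first-moment bound --- however sharp the constants extracted from Lemma \ref{Lem:SumCl} --- can produce a zero hitting probability; you have correctly flagged this as "the principal technical hurdle," but it is not a refinement issue, it is a structural failure of the method at the critical case, which is a real case of the theorem (e.g.\ $\alpha=3$, $d=4$). The paper's proof avoids contraposition altogether: it introduces the conditional expectation $M(y,\varepsilon)=\mathbb{E}[\hbar_{\varepsilon}^{0}(D(x,r))\,|\,\mathbf{T}(y)]$, lower-bounds $M$ on the hitting event using the independent decomposition $\mathbf{T}(x)=\mathbf{Q}(x,y)+C(x,y)\mathbf{T}(y)$ (producing a factor $\int_{D(N,r)}\Phi\,d\nu$), upper-bounds $\mathbb{E}[\sup_{y}M^{2}]$ by a two-point conditional density estimate (producing a factor $\nu(D)\int_{D(N,r)}\Phi\,d\nu$), and compares the two to get $\mathbb{P}\{\mathbf{T}^{-1}(\mathbf{0})\cap D(N,r)\neq\varnothing\}\leq C_{3,3}\bigl[\int_{D(N,r)}\Phi(d_{\mathbb{S}^{2}}(x,N))\,d\nu(x)\bigr]^{-1}$. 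Positivity of the hitting probability then forces finiteness of the integral uniformly in all cases, including the critical one. You would need to replace your covering argument with something of this second-moment type to close the proof.
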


\begin{proof}
We employ an argument that is similar, in spirit, to that used by
Khoshnevisan and Xiao \cite{KhoshXiao} in the proof of its Theorem 2.9. The
steps of our proof is taken in the order $(i)\implies (ii)\implies
(iii)\implies (i).$

To prove $(i)\implies (ii),$ it is sufficient to prove 
\begin{equation*}
\mathbb{P}\left\{ \mathbf{a\in T}(D(x,r))\right\} >0.
\end{equation*}%
The idea is to find a random measure $\hbar $ such that 
\begin{equation*}
\mathbb{P}\left\{ {\large \hbar }\left( \mathbf{T}^{-1}(\mathbf{a})\cap
D(x,r)\right) >0\right\} >0.
\end{equation*}%
More precisely, it satisfies 
\begin{equation}
\mathbb{P}\left\{ {\large \hbar }(D(x,r))>0\right\} >0,  \label{PrHsphere>0}
\end{equation}%
as well as ${\large \hbar }(D_{T}(\delta ))=0,\ a.s.$ for any $\delta >0$
with 
\begin{equation}
D_{T}(\delta )=\left\{ y\in D(x,r):\left\vert \mathbf{T}(y)-\mathbf{a}%
\right\vert _{\infty }>\delta \right\} .  \label{def:DT}
\end{equation}%
Note that, under condition $(i),$ there exists a probability measure $\mu
\in \mathcal{P}(\mathbb{S}^{2})$ such that the $\Phi $-$energy$ of $\mu $ in 
$\left( \ref{def:Phi-energy}\right) $ is finite. Now for any $\varepsilon
>0,\ \mathbf{a}\in \mathbb{R}^{d},$ we define a random measure 
\begin{equation}
{\large \hbar }_{\varepsilon }(E)=(2\varepsilon )^{-d}\int_{E}\mathrm{1}%
\left\{ \left\vert \mathbf{T}(y)-\mathbf{a}\right\vert _{\infty }\leq
\varepsilon \right\} \mu (dy).  \label{def:H}
\end{equation}%
where $E$ is any Borel set on $\mathbb{S}^{2}.$ Then it is readily seen that 
\begin{eqnarray*}
(2\varepsilon )^{d}\mathbb{E}\left[ {\large \hbar }_{\varepsilon }(E)\right]
&=&\int_{E}\mathbb{P}\left\{ \left\vert \mathbf{T}(y)-\mathbf{a}\right\vert
_{\infty }\leq \varepsilon \right\} \mu (dy) \\
&=&\int_{E}\left[ \prod\limits_{j=1}^{d}\int_{-\varepsilon }^{\varepsilon }%
\frac{1}{\sqrt{2\pi }}\exp \left\{ -\frac{1}{2}\left( t_{j}+a_{j}\right)
^{2}\right\} dt_{j}\right] \mu (dx).
\end{eqnarray*}%
By Fatou's Lemma, we obtain 
\begin{equation}
\lim_{\varepsilon \rightarrow 0}\mathbb{E}\left[ {\large \hbar }%
_{\varepsilon }(E)\right] =(2\pi )^{-d/2}\exp \left\{ -\frac{1}{2}\left\Vert 
\mathbf{a}\right\Vert ^{2}\right\} \mu (E),  \label{limEH}
\end{equation}%
where $\left\Vert \cdot \right\Vert $ denotes the Euclidean distance in $%
\mathbb{R}^{d}.$ Moreover, 
\begin{eqnarray*}
&&(2\varepsilon )^{2d}\mathbb{E}\left\{ \left[ {\large \hbar }_{\varepsilon
}(E)\right] ^{2}\right\} \\
&=&\int_{E}\int_{E}\mathbb{P}\left\{ \left\vert \mathbf{T}(y)-\mathbf{a}%
\right\vert _{\infty }\leq \varepsilon ,\left\vert \mathbf{T}(z)-\mathbf{a}%
\right\vert _{\infty }\leq \varepsilon \right\} \mu (dy)\mu (dz) \\
&=&\int_{E}\int_{E}\left[ \int_{-\varepsilon }^{\varepsilon
}\int_{-\varepsilon }^{\varepsilon }p\left( d_{\mathbb{S}^{2}}(y,z);\mathbf{t%
}_{2}+a_{j}\mathbf{1}\right) d\mathbf{t}_{2}\right] ^{d}\mu (dy)\mu (dz).
\end{eqnarray*}%
Recall $(\ref{def:density-p}),$ we have for any $\mathbf{t}_{2}\in \left[
-\varepsilon ,\varepsilon \right] \times \left[ -\varepsilon ,\varepsilon %
\right] ,$ 
\begin{equation}
\lim_{\varepsilon \rightarrow 0}p\left( d_{\mathbb{S}^{2}}(y,z);\mathbf{t}%
_{2}+a_{j}\mathbf{1}\right) =p\left( d_{\mathbb{S}^{2}}(y,z);a_{j}\mathbf{1}%
\right) .  \label{ineq:p(t)}
\end{equation}%
Thus, by Fatou's Lemma again and the fact of $(i),$ we have 
\begin{eqnarray}
\lim_{\varepsilon \rightarrow 0}\mathbb{E}\left\{ \left[ {\large \hbar }%
_{\varepsilon }\left( E\right) \right] ^{2}\right\} &=&\int_{E}\int_{E}\Phi
\left( d_{\mathbb{S}^{2}}(y,z);\mathbf{a}\right) \mu (dy)\mu (dz)  \notag \\
&\leq &\int_{E}\int_{E}\Phi \left( d_{\mathbb{S}^{2}}(y,z)\right) \mu
(dy)\mu (dz)<\infty .  \label{LimEH^2}
\end{eqnarray}%
in view of the inequality (\ref{ineq:UPhi_a}). Now let $E=D(x,r),$ it is
readily seen that for ${\large \hbar =}\lim_{\varepsilon \rightarrow 0}%
{\large \hbar }_{\varepsilon }$ in the weakly convergent sense, we have 
\begin{equation*}
\mathbb{E}\left[ {\large \hbar }(D(x,r))\right] =(2\pi )^{-d/2}\mu
(D(x,r)),\ \mathbb{E}\left\{ \left[ {\large \hbar }(D(x,r))\right]
^{2}\right\} <\infty ,
\end{equation*}%
in view of $(\ref{limEH})$\ and $(\ref{LimEH^2}).$ Hence, by Cauchy-Schwarz
inequality, we obtain 
\begin{equation*}
\mathbb{P}\left\{ {\large \hbar }(D(x,r))>0\right\} \geq \left\vert \mathbb{E%
}\left[ {\large \hbar }(D(x,r))\right] \right\vert ^{2}\left\{ \mathbb{E}%
\left[ \left\vert {\large \hbar }(D(x,r))\right\vert ^{2}\right] \right\}
^{-1}>0.
\end{equation*}%
It remains to prove that the random measure ${\large \hbar }$ is supported
on $\mathbf{T}^{-1}\left( \mathbf{a}\right) ,$ which can be obtained by the
fact that for each $\delta >0,$ 
\begin{equation*}
\mathbb{E}\left[ {\large \hbar }\left( D_{T}(\delta )\right) \right]
=\lim_{\varepsilon \rightarrow 0}\mathbb{E}\left[ {\large \hbar }%
_{\varepsilon }\left( D_{T}(\delta )\right) \right] =0,
\end{equation*}%
in view of the definitions $D_{T}(\delta )$ and ${\large \hbar }%
_{\varepsilon }$ in (\ref{def:DT}) and (\ref{def:H}).

The implication of $(iii)\implies (i)$ is obvious and we leave the proof of $%
(ii)\implies (iii)$ in the following Lemma. Thus the proof is completed.
\end{proof}

\begin{lemma}
In Proposition \ref{Prop:Capacity}, $(ii)\implies (iii).$
\end{lemma}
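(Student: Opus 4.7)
The plan is to prove the contrapositive: assume $(iii)$ fails, so that $\int_{D(N,r)}\Phi(d_{\mathbb{S}^{2}}(N,y))\,d\nu(y)=+\infty$ for every $r>0$, and derive that the hitting probability in $(ii)$ vanishes. By the two-sided variogram estimate \eqref{variogram} in Lemma \ref{Lem:C1C2}, one has $\Phi(u)\asymp u^{-d(\alpha/2-1)}$ as $u\downarrow 0$ (and $d\nu$ behaves like $\vartheta\,d\vartheta\,d\varphi$ near $N$), so failure of $(iii)$ is equivalent to $d(\alpha-2)\geq 4$. Under this assumption the goal becomes to show that $\mathbb{P}\{\mathbf{T}^{-1}(\mathbf{a})\cap D(x,r)\neq\varnothing\}=0$ for every $\mathbf{a}\in\mathbb{R}^{d}$, $x\in\mathbb{S}^{2}$, $r>0$.

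Fix such data and, for each $n$, cover $D(x,r)$ by $M_{n}=O(2^{2n})$ open disks $D(y_{i}^{(n)},2^{-n})$. The key geometric inclusion is: if $\mathbf{T}^{-1}(\mathbf{a})\cap D(y_{i}^{(n)},2^{-n})\neq\varnothing$, then by sample-path continuity
$$|\mathbf{T}(y_{i}^{(n)})-\mathbf{a}|_{\infty}\leq \Omega_{i}^{(n)}:=\sup_{y\in D(y_{i}^{(n)},2^{-n})}|\mathbf{T}(y)-\mathbf{T}(y_{i}^{(n)})|_{\infty}.$$
Applying Lemma \ref{DudleyUB} coordinate-wise together with the variogram bound from Lemma \ref{Lem:C1C2} (which controls $N_{d_{T_0}}(D(y_i^{(n)},2^{-n}),\cdot)$ and $\bar d$ as in the proof of Lemma \ref{LemTupper}) yields $\mathbb{P}\{\Omega_{i}^{(n)}\geq K\rho_{\alpha}(2^{-n})\sqrt{n}\}\leq d\,e^{-cn}$ for suitable $K,c$. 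Choosing $c$ larger than $2\log 2$, the union over $i$ of these bad events has total probability $o(1)$. On the complementary event, by a union bound and the uniform Gaussian density estimate $\mathbb{P}\{|\mathbf{T}(y)-\mathbf{a}|_{\infty}\leq \lambda\}\leq (2\lambda)^{d}(2\pi)^{-d/2}$,
$$\mathbb{P}\{\mathbf{T}^{-1}(\mathbf{a})\cap D(x,r)\neq\varnothing\}\leq K'\,M_{n}\bigl(\rho_{\alpha}(2^{-n})\sqrt{n}\bigr)^{d}+o(1)=K''\,2^{n(2-d(\alpha/2-1))}\,n^{d/2}+o(1).$$
For $d(\alpha-2)>4$ the exponent of $2^{n}$ is negative, so letting $n\to\infty$ forces the hitting probability to zero.

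The main obstacle is the critical case $d(\alpha-2)=4$, where the crude union bound gives only $O(n^{d/2})$ and no decay. Here, one must replace the plain union bound by a finer estimate exploiting the strong local nondeterminism \eqref{SLND} in Lemma \ref{Lem:C1C2}: conditioning $\mathbf{T}(y_{i}^{(n)})$ on the values at a carefully ordered subset of previously chosen centers produces a product of conditional Gaussian small-ball factors whose variances are bounded below by $\rho_{\alpha}^{2}$ of the nearest-neighbour distance. Summing the resulting bound against the spherical volume gives an extra logarithmic gain that absorbs the $n^{d/2}$ factor, so the limit is again zero. Since $(iii)\Rightarrow (i)$ is immediate (take $\mu$ proportional to Lebesgue on $D(N,r)$ in the definition of $\mathcal{C}_{\Phi}$), this completes the equivalence.
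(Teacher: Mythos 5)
Your proposal takes a genuinely different route from the paper, and it has a gap at exactly the point where the two routes diverge in power. The paper does not argue by contrapositive at all: it assumes $(ii)$, introduces the approximate occupation measure $\hbar_{\varepsilon}^{0}$ and the conditional expectation $M(y,\varepsilon)=\mathbb{E}[\hbar_{\varepsilon}^{0}(D(N,r))\mid\mathbf{T}(y)]$, and then squeezes $\mathbb{E}\{\sup_{y}M^{2}(y,\varepsilon)\}$ between a lower bound proportional to $\bigl(\int_{D(N,r)}\Phi\,d\nu\bigr)^{2}\cdot\mathbb{P}\{\mathbf{T}^{-1}(\mathbf{0})\cap D(N,r)\neq\varnothing\}$ and an upper bound proportional to $\int_{D(N,r)}\Phi\,d\nu$ (via the conditioned determinant estimate coming from strong local nondeterminism). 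Dividing, one gets $\int_{D(N,r)}\Phi\,d\nu\leq C_{3,3}/\mathbb{P}\{\mathbf{T}^{-1}(\mathbf{0})\cap D(N,r)\neq\varnothing\}<\infty$ directly, with no case distinction on the sign of $4-(\alpha-2)d$. Your covering/first-moment argument is correct and self-contained in the strictly supercritical regime $d(\alpha-2)>4$: the reduction of $\neg(iii)$ to $d(\alpha-2)\geq 4$ via \eqref{variogram} is right, the oscillation control via Lemma \ref{DudleyUB} with threshold $K\rho_{\alpha}(2^{-n})\sqrt{n}$ and exceptional probability $e^{-cn}$ is right, and the resulting bound $2^{n(2-d(\alpha-2)/2)}n^{d/2}\to 0$ is right.

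The genuine gap is the critical case $d(\alpha-2)=4$. This case is not optional: it is precisely the boundary where $\int_{D(N,r)}\Phi\,d\nu$ diverges (only logarithmically), so the contrapositive must cover it, and it is also the case the theorem's ``if and only if'' hinges on. Your union bound there gives $n^{d/2}$, which diverges, and the proposed repair --- conditioning on previously chosen centers via \eqref{SLND} to extract ``an extra logarithmic gain that absorbs the $n^{d/2}$ factor'' --- is an assertion, not an argument. The polynomial factor $n^{d/2}$ comes from the unavoidable $\sqrt{\log}$ correction in the modulus of continuity, and it is not clear that any first-moment covering scheme, however cleverly conditioned, removes it; critical-dimension polarity for Gaussian fields is known to be delicate and is the subject of a separate paper (Dalang--Mueller--Xiao \cite{DalangMuellerXiao}, cited here), whose methods go well beyond a refined union bound. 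This is exactly what the paper's second-moment argument buys: by bounding the hitting probability \emph{above} by $C_{3,3}\bigl[\int_{D(N,r)}\Phi\,d\nu\bigr]^{-1}$, the critical and supercritical cases are dispatched simultaneously, since the divergence of the energy integral forces the probability to vanish regardless of how slowly it diverges. To make your route work you would either have to import a critical-case polarity result or switch to a second-moment estimate of the type the paper uses.
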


\begin{proof}
Recall (\ref{variogram-sig}) and note that for any $x,y\in D(N,r),$ 
\begin{equation*}
\mathbb{E}\left( T_{0}(x)|T_{0}(y)\right) =\frac{\mathbb{E}\left(
T_{0}(x)T_{0}(y)\right) }{\mathbb{E}\left( T_{0}(y)^{2}\right) }%
T_{0}(y)=C(x,y)T_{0}(y),
\end{equation*}%
where 
\begin{equation*}
C(x,y)=:\left( 1-\frac{1}{2}\sigma ^{2}(d_{\mathbb{S}^{2}}(x,y))\right) .
\end{equation*}%
Obviously, we have 
\begin{equation*}
\left( 1-\frac{1}{2}\sigma ^{2}(2r)\right) \leq C(x,y)\leq 1.
\end{equation*}%
Now let $Q_{0}(x,y)=T_{0}(x)-C(x,y)T_{0}(y),$ then it is readily seen that $%
Q_{0}(x,y)$ is again Gaussian with $\mathbb{E}\left[ Q_{0}(x,y)\right] =0,$
and 
\begin{equation}
\mathrm{Var}\left( Q_{0}(x,y)\right) =1-C(x,y)^{2}=\left[ 2\pi p\left( d_{%
\mathbb{S}^{2}}(x,y);\mathbf{0}\right) \right] ^{-2}  \label{VarZ}
\end{equation}%
in view of (\ref{def:density-p}). Moreover, we have the following
decomposition for $\mathbf{T}(x)$: 
\begin{equation*}
\mathbf{T}(x)=\mathbf{Q}(x,y)+C(x,y)\mathbf{T}(y),
\end{equation*}%
with $\mathbf{Q}(x,y)$ and $\mathbf{T}(y)$ independent. Now we are ready to
prove $(ii)\implies (iii).$ For any $\varepsilon \in \left( 0,1\right) ,$
define a random measure 
\begin{equation*}
{\large \hbar }_{\varepsilon }^{0}(B)=(2\varepsilon )^{-d}\int_{B}\mathrm{1}%
\left\{ \left\vert \mathbf{T}(x)\right\vert _{\infty }\leq \varepsilon
\right\} d\nu (x).
\end{equation*}%
for any Borel set $B\subseteq \mathbb{S}^{2},$ and consider the following
conditional expectation 
\begin{equation*}
M(y,\varepsilon )=\mathbb{E}\left[ {\large \hbar }_{\varepsilon
}^{0}(D(x,r))|\mathbf{T}(y)\right] ,\quad y\in D(x,r).
\end{equation*}%
Then by the independence of $\mathbf{Q}(x,y)$ and $\mathbf{T}(y),$ we have 
\begin{equation*}
M(y,\varepsilon )\geq \left( 2\varepsilon \right) ^{-d}\int_{D(N,r)}\mathbb{P%
}\left\{ \left\vert \mathbf{Q}(x,y)\right\vert _{\infty }\leq \frac{%
\varepsilon }{2}\right\} d\nu (x)\mathrm{1}\left\{ \left\vert \mathbf{T}%
(y)\right\vert _{\infty }\leq \frac{\varepsilon }{2}\right\} .
\end{equation*}%
Now, by first squaring both sides of the inequality above, then doing the
expectation after taking the supremum over $D(N,r),$ we obtain 
\begin{eqnarray}
&&\mathbb{E}\left\{ \sup_{y\in D(N,r)}M^{2}(y,\varepsilon )\right\}   \notag
\\
&\geq &(2\varepsilon )^{-2d}\inf_{y\in D(N,r)}\left[ \int_{D(N,r)}\mathbb{P}%
\left\{ \left\vert \mathbf{Q}(x,y)\right\vert _{\infty }\leq \frac{%
\varepsilon }{2}\right\} d\nu (x)\right] ^{2}  \label{ineq:EsupM2-0} \\
&&\cdot \mathbb{P}\left( \left\vert \mathbf{T}(y)\right\vert \leq \frac{%
\varepsilon }{2}\text{ for some }y\in D(N,r)\right) .  \notag
\end{eqnarray}%
Note that, by Jensen's inequality, we have 
\begin{equation}
\mathbb{E}\left[ \sup_{y\in D(N,r)}M^{2}(y,\varepsilon )\right] \leq \mathbb{%
E}\left\{ \sup_{y\in D(N,r)}\mathbb{E}\left[ \left\vert {\large \hbar }%
_{\varepsilon }^{0}\left( D(x,r)\right) \right\vert ^{2}|\mathbf{T}(y)\right]
\right\} .  \label{ineq:EsupM2}
\end{equation}%
Moreover, recall Lemma \ref{Lem:C1C2} and $\mathrm{Var}\left(
T_{0}(x)\right) =1$ for any $x\in \mathbb{S}^{2},$ we have for any three
points $x_{1},x_{2},x_{3}\in D(N,r)$ with some $r\in (0,\delta _{0}),$ 
\begin{eqnarray*}
&&\det \left[ \mathrm{Cov}\left(
T_{0}(x_{1})T_{0}(x_{2})|T_{0}(x_{3})\right) \right]  \\
&=&\mathrm{Var}(T_{0}(x_{2}))\mathrm{Cov}\left(
T_{0}(x_{1})|T_{0}(x_{2}),T_{0}(x_{3})\right)  \\
&\geq &C_{3,1}\left[ p\left( \min_{i=2,3}d_{\mathbb{S}^{2}}(x_{1},x_{i});%
\mathbf{0}\right) \right] ^{-2}.
\end{eqnarray*}%
where the constant $C_{3,1}$ is positive and depends on $\delta _{0},$ $%
K_{2,1}$ and $K_{2,2}.$ Hence, 
\begin{eqnarray*}
(\ref{ineq:EsupM2}) &\leq &\mathbb{E\{}\sup_{y\in D(N,r)}(2\varepsilon
)^{-2d}\int_{D(N,r)}\int_{D(N,r)}d\nu (x_{1})d\nu (x_{2}) \\
&&\cdot \mathbb{P}\left\{ \left\vert \mathbf{T}(x_{1})\right\vert _{\infty
}\leq \varepsilon ,\left\vert \mathbf{T}(x_{2})\right\vert _{\infty }\leq
\varepsilon |\mathbf{T}(y)\right\} \} \\
&\leq &\mathbb{E}\sup_{x_{3}\in D(N,r)}\int_{D(N,r)}\int_{D(N,r)}d\nu
(x_{1})d\nu (x_{2}) \\
&&\cdot (2\pi )^{-d}\left\{ \det \left[ Cov\left(
T_{0}(x_{1}),T_{0}(x_{2})|T_{0}(x_{3})\right) \right] \right\} ^{-d/2} \\
&\leq &C_{3,2}\sup_{x_{3}\in D(N,r)}\int_{D(N,r)}\int_{D(N,r)}\left[ p\left(
\min_{i=2,3}d_{\mathbb{S}^{2}}(x_{1},x_{i});\mathbf{0}\right) \right]
^{-2}d\nu (x_{1})d\nu (x_{2}) \\
&\leq &C_{3,2}\int_{D(N,r)}\int_{D(N,r)}\Phi (d_{\mathbb{S}%
^{2}}(x_{1},x_{2}))d\nu (x_{1})d\nu (x_{2}) \\
&&+C_{3,2}\ \nu \left\{ D(N,r)\right\} \sup_{x_{3}\in
D(N,r)}\int_{D(N,r)}\Phi \left( d_{\mathbb{S}^{2}}(x,x_{3})\right) d\nu (x),
\end{eqnarray*}%
which gives 
\begin{equation}
\mathbb{E}\left[ \sup_{y\in D(N,r)}M^{2}(y,\varepsilon )\right] \leq
2C_{3,2}\ \nu \left\{ D(N,r)\right\} \int_{D(N,r)}\Phi \left( d_{\mathbb{S}%
^{2}}(x,N)\right) d\nu (x).  \label{ineq:EsupM2-1}
\end{equation}%
due to the fact that, for any $y\in D(N,r)$ 
\begin{equation*}
\int_{D(N,r)}\Phi \left( d_{\mathbb{S}^{2}}(x,y)\right) d\nu (x)\leq
\int_{D(N,r)}\Phi \left( d_{\mathbb{S}^{2}}(x,N)\right) d\nu (x)
\end{equation*}%
Here the constant $C_{3,2}$ is positive and depends on $C_{3,1}.$ In the
meantime, by Fatou's Lemma and the upper bound in $(\ref{ineq:p(t)})$, we
have 
\begin{eqnarray}
&&\lim_{\varepsilon \rightarrow 0}(2\varepsilon )^{-d}\left[ \int_{D(N,r)}%
\mathbb{P}\left\{ \left\vert \mathbf{Z}(x,y)\right\vert _{\infty }\leq \frac{%
\varepsilon }{2}\right\} d\nu (x)\right]   \notag \\
&=&(2\pi )^{d/2}\int_{D(N,r)}\Phi \left( d_{\mathbb{S}^{2}}(x,y)\right) d\nu
(x)  \notag \\
&\geq &\frac{1}{3}(2\pi )^{d/2}\int_{D(N,r)}\Phi \left( d_{\mathbb{S}%
^{2}}(x,N)\right) d\nu (x)  \label{limPZ}
\end{eqnarray}%
in view of the representation $(\ref{VarZ})$ and the fact that, for any $%
y\in D(N,r)$ 
\begin{equation*}
\int_{D(N,r)}\Phi \left( d_{\mathbb{S}^{2}}(x,y)\right) d\nu (x)\geq \frac{1%
}{3}\int_{D(N,r)}\Phi \left( d_{\mathbb{S}^{2}}(x,N)\right) d\nu (x)
\end{equation*}%
Finally, combining inequalities (\ref{ineq:EsupM2-0}), (\ref{ineq:EsupM2-1})
and (\ref{limPZ}), together with 
\begin{eqnarray*}
&&\mathbb{P}\left( \left\vert \mathbf{T}(y)\right\vert \leq \frac{%
\varepsilon }{2}\text{ for some }y\in D(N,r)\right)  \\
&\geq &\mathbb{P}\left( \mathbf{T}(y)=\mathbf{0}\text{ for some }y\in
D(N,r)\right) >0,
\end{eqnarray*}%
we obtain 
\begin{equation*}
C_{3,3}\left[ \int_{D(N,r)}\Phi \left( d_{\mathbb{S}^{2}}(x,N)\right) d\nu
(x)\right] ^{-1}\geq \mathbb{P}\left( \mathbf{T}^{-1}(\mathbf{0})\cap
D(N,r)\neq \varnothing \right) >0,
\end{equation*}%
where the constant $C_{3,3}$ is positive and depends on $C_{3,1}$ and $%
C_{3,2}.$ and the proof of Lemma is then completed.
\end{proof}

As an immediate consequence, we prove the follows:

\begin{proof}[Proof of (\protect\ref{Th:Existence}) in Theorem \protect\ref%
{Th:Main}]
Recalling the formula (\ref{variogram-sig}) and (\ref{def:Phi_a}) for the
definition $\Phi =\Phi _{\mathbf{0}}$ and the estimations (\ref{variogram})
for variogram of $T_{0}$ in Lemma \ref{Lem:C1C2}, we have 
\begin{equation*}
\int_{D(N,r)}\Phi \left( d_{\mathbb{S}^{2}}(x,N)\right) d\nu (x)\approx
\int_{0}^{r}\int_{0}^{2\pi }\theta ^{d(1-\alpha /2)}\sin \theta d\phi
d\theta ,
\end{equation*}%
where by $A\approx B$ we mean that $C_{3,4}B\leq A\leq C_{3,5}B$ for some
positive constants $C_{3,4},$ $C_{3,5}$ depending on $K_{2,1}.$ The right
side of the equivalence above is finite if and only $1+d(1-\alpha /2)>-1,$
thus the equivalence (\ref{Th:Existence}) is proved in view of Proposition %
\ref{Prop:Capacity}.
\end{proof}

\section{Hausdorff Measure of the Level Sets \label{Sec:Measure}}

In this section, we give a proof of (\ref{Th:Measure}) in Theorem \ref%
{Th:Main}, which is divided into proving lower and upper bounds separately.
For the lower bound we will make the local time $L({\mathbf{t}},\cdot )$ as
a natural measure on $\mathbf{T}^{-1}\left( \mathbf{t}\right) $ and make use
of Lemma \ref{Upper density}.

The proof of upper bound is more involved. We will extend the covering
argument by Baraka and Mountford \cite{BarakMount} to the spherical random
fields. Their method strengthened the covering argument by Xiao \cite{Xiao97}
for the level sets, see also Talagrand \cite{Talagrand95,Talagrand98}, .

\subsection{Lower bound for the Hausdorff measure}

We first establish the lower bound for the exact Hausdorff measure of the
level set $\mathbf{T}^{-1}\left( \mathbf{t}\right) $ by applying Lemma \ref%
{Upper density}.

A basic tool to establish it is the proposition below:

\begin{proposition}
\label{Prop:LIL} Under the conditions of Theorem \ref{Th:Main}, there exists
a positive constant $K_{4,1},$ such that for all $x\in \mathbb{S}^{2}$ and $%
\mathbf{t}\in \mathbb{R}^{d}$, with probability one, 
\begin{equation*}
\limsup_{r\rightarrow 0}\frac{L\left( \mathbf{t},D(x,r) \right) }{\phi ( r) }%
\leq K_{4,1},
\end{equation*}
where $\phi ( \cdot ) $ is defined in (\ref{def:psi}).
\end{proposition}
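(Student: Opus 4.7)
The plan is to follow the standard moment-method plus Borel--Cantelli template for LIL-type upper bounds on local times, adapted to the spherical setting via the tools of Section~\ref{Sec:Preliminaries}.

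The first step is to establish a high-moment bound of the form
\begin{equation*}
\mathbb{E}\left[L(\mathbf{t}, D(x,r))^{n}\right] \leq K^{n}(n!)^{a}\,r^{n(2-(\alpha-2)d/2)}
\end{equation*}
for even $n \geq 2$, where $a>0$ is an exponent governed by the local geometry of $T_0$. Starting from the Fourier representation (\ref{repLT}), one writes $\mathbb{E}[L^{n}]$ as an $n$-fold iterated integral over $D(x,r)^n$ of a Gaussian characteristic-function value. The integrand is then bounded via the joint Gaussian density estimate obtained from strong local nondeterminism (Corollary~\ref{C2'}), which yields a product of reciprocal conditional standard deviations $\prod_{k} \sigma_k^{-d}$. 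A sort-by-nearest-predecessor argument (the source of the $n!$) reduces the iterated integral to $n$ single-variable integrals, each of the form $\int_0^{Kr} u\,\rho_{\alpha}(u)^{-d}\,du$, which is of order $r^{2-(\alpha-2)d/2}$; convergence is guaranteed precisely because $(\alpha-2)d<4$.

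Second, I would apply Chebyshev's inequality along the geometric sequence $r_{k}=2^{-k}$ with moment order $n_{k}=\lfloor c_{0}\log\log(1/r_{k})\rfloor$. Writing $\phi(r_{k})^{n_{k}}$ as $r_{k}^{n_{k} s}(\log k)^{n_{k} s'}$ up to constants, with $s=2-(\alpha-2)d/2$ and $s'=(\alpha-2)d/4$, the $r_k$-dependence cancels exactly and Stirling gives
\begin{equation*}
\mathbb{P}\left\{L(\mathbf{t},D(x,r_{k}))>\lambda\phi(r_{k})\right\} \leq C(\lambda,c_{0})^{n_{k}},
\end{equation*}
for a constant $C(\lambda,c_{0})$ that can be made strictly less than $1$ by choosing $\lambda$ large. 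This decays super-polynomially in $k$, so the Borel--Cantelli lemma yields
\begin{equation*}
\limsup_{k\to\infty}\frac{L(\mathbf{t}, D(x, r_k))}{\phi(r_k)}\leq\lambda\quad\text{a.s.}
\end{equation*}

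Finally, to pass from the dyadic subsequence to arbitrary $r\downarrow 0$, I would use monotonicity of the measure $L(\mathbf{t}, \cdot)$ in its set argument, giving $L(\mathbf{t}, D(x, r)) \leq L(\mathbf{t}, D(x, r_k))$ whenever $r \in (r_{k+1}, r_k]$, together with the doubling property $\phi(r) \geq K^{-1}\phi(r_{k})$, which is immediate from (\ref{def:psi}) since $\rho_\alpha$ is a power and $\sqrt{\log|\log r|}$ varies slowly. These absorb the gap between consecutive dyadic scales into the final constant $K_{4,1}$.

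The main obstacle is obtaining the moment estimate with a factorial exponent $a$ sharp enough for the Chebyshev step to work: a naive sort-by-distance bound produces $a=1$, which would yield only a weaker LIL with $\log\log$-exponent $1$ rather than the sharp exponent $s'=(\alpha-2)d/4 < 1$ appearing in $\phi$. Extracting the correct exponent requires a more careful accounting of how each conditional variance contributes to the iterated integral, in the spirit of Talagrand's sharpening \cite{Talagrand95} and Xiao's refinement \cite{Xiao97}, but now carried out in the spherical geometry with SLND supplied by Corollary~\ref{C2'} and Lemma~\ref{Lem:C1C2}. This combinatorial refinement of the moment estimate is where the technical bulk of the proof lies.
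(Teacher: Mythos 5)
Your outline follows the classical template (sharp moment bound with factorial exponent $(n!)^{d(\alpha-2)/4}$, Chebyshev along $r_k=2^{-k}$ with $n_k\asymp\log\log(1/r_k)$, Borel--Cantelli, then a monotonicity/doubling argument to fill in non-dyadic radii), and that skeleton, including the choice $n\sim\log m$, is exactly the one the paper uses; the moment estimate you defer to is essentially what the paper imports from \cite{LanXiao} in (\ref{ineq:EInt-fm^n}). Two small points: the decay you get is $k^{-c_0|\log\theta|}$, i.e.\ polynomial of adjustable degree rather than super-polynomial (still summable once $\lambda$ is large), and this is harmless.

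The substantive divergence is that you bound $\mathbb{E}\bigl[L(\mathbf{t},D(x,r))^{n}\bigr]$ for a \emph{fixed} $x$, whereas the paper bounds $\mathbb{E}\int_{\mathbb{S}^2}[L(\mathbf{t},D(x,2^{-m}))]^{n}\,L(\mathbf{t},d\nu(x))$, i.e.\ the $n$-th power integrated against the local-time measure itself (an $(n+1)$-point moment). This is not a cosmetic choice. Your version yields the conclusion for each deterministic $x$, hence for $\nu$-a.e.\ $x$ simultaneously; but the measure $L(\mathbf{t},\cdot)$ is carried by the level set $\mathbf{T}^{-1}(\mathbf{t})$, which is $\nu$-null, so $L(\mathbf{t},\cdot)$ is singular with respect to $\nu$ and ``$\nu$-a.e.\ $x$'' tells you nothing about ``$L(\mathbf{t},\cdot)$-a.e.\ $x$''. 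The weighted moment gives $\mathbb{E}[L(\mathbf{t},D_m)]\le m^{-2}$ for the exceptional sets $D_m$, hence a.s.\ $L(\mathbf{t},D_0)=0$ for the set $D_0$ where the upper density exceeds $K_{4,1}$ --- and that is precisely the form needed to apply Lemma \ref{Upper density} in the proof of Theorem \ref{Th:HM-Lower}. So your argument proves the proposition under the literal reading ``for each fixed $x$, a.s.'' but loses the $L$-a.e.\ version that the proposition is actually used for; to recover it you would need to redo your moment computation with the extra integration against $L(\mathbf{t},d\nu(x))$, which is the one genuinely new ingredient here beyond the standard Talagrand--Xiao template.
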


\begin{proof}
Denote by $f_{m}(x)=L\left( \mathbf{t},D(x,2^{-m})\right) ,$ the local time
of $\mathbf{T}$ at $\mathbf{t\in \mathbb{R}}^{d}$ in $D(x,2^{-m})\subseteq 
\mathbb{S}^{2},$ for every integer $m\geq 1,$ and recall the representation (%
\ref{repLT}), we have 
\begin{equation*}
\mathbb{E}\left[ \int_{\mathbb{S}^{2}}\left[ f_{m}(x)\right] ^{n}L\left( 
\mathbf{t},d\nu (x)\right) \right] =(2\pi )^{-(n+1)}
\end{equation*}%
\begin{equation*}
\times \int_{\mathbb{S}^{2}}\int_{\left[ D(x,2^{-m})\right] ^{n}}\int_{%
\mathbb{R}^{d(n+1)}}\exp \left\{ i\sum_{j=1}^{n+1}\boldsymbol{\xi }%
_{j}^{T}\left( \mathbf{T}(x_{j})-\mathbf{t}\right) \right\} d\boldsymbol{\xi 
}d\boldsymbol{\nu }(\mathbf{x}),
\end{equation*}%
with $d\boldsymbol{\nu }(\mathbf{x})=d\nu (x_{1})\cdots d\nu (x_{n+1})$ and $%
d\boldsymbol{\xi =}d\boldsymbol{\xi }_{1}\cdots d\boldsymbol{\xi }_{n+1},$
where $\boldsymbol{\xi }_{1},\cdots ,\mathbb{R}^{d}.$ Using the similar
argument as in \cite{LanXiao}, we obtain that the right-hand side of the
equality above is bounded by 
\begin{equation}
\begin{split}
& \int_{\mathbb{S}^{2}}\int_{\left[ D(x,2^{-m})\right] ^{n}}\frac{d\nu (%
\mathbf{x})}{\left\vert \det Cov\left( T(x_{1}),...,T(x_{n+1})\right)
\right\vert ^{1/2}} \\
& \leq (C_{4,1})^{n+1}(n!)^{\frac{d}{4}(\alpha -2)}\left( 2^{-m}\right) ^{%
\frac{n}{2}(4-(\alpha -2)d)}.
\end{split}
\label{ineq:EInt-fm^n}
\end{equation}%
where $C_{4,1}$ is a positive constant and depends on $\alpha ,\ d$ and $%
K_{2,2}.$ Now, for each $m\geq 1,$ consider the random set 
\begin{equation*}
D_{m}(\omega )=\left\{ x\in \mathbb{S}^{2}:f_{m}(x)\geq K_{4,1}\phi
(2^{-m})\right\} ,
\end{equation*}%
Let $K_{4,1}>C_{4,1}e^{2}$ and taking $n=\left[ \log m\right] ,$ then by
applying $\left( \ref{ineq:EInt-fm^n}\right) $ and Stirling's formula, we
have 
\begin{eqnarray*}
\mathbb{E}\left[ L\left( \mathbf{t},D_{m}\right) \right] &=&\mathbb{E}\left[
\int_{D_{m}}L\left( \mathbf{t},d\nu (x)\right) \right] \leq \frac{\mathbb{E}%
\int_{\mathbb{S}^{2}}\left[ f_{m}(x)\right] ^{n}L\left( t,d\nu (x)\right) }{%
\left[ K_{4,1}\phi (2^{-m})\right] ^{n}} \\
&\leq &(C_{4,1})^{n+1}(n!)^{d(\alpha -2)/4}\left[ K_{4,1}(\log m)^{d(\alpha
-2)/4}\right] ^{-n}\leq m^{-2},
\end{eqnarray*}%
which implies that 
\begin{equation*}
\mathbb{E}\left[ \sum_{m=1}^{\infty }L(\mathbf{t},D_{m})\right] <\infty .
\end{equation*}%
Therefore, by Borel-Cantelli Lemma, we have for almost all $x\in \mathbb{S}%
^{2},$ with probability 1, 
\begin{equation}
\limsup_{m\rightarrow \infty }\frac{L\left( \mathbf{t},D(x,2^{-m})\right) }{%
\phi (2^{-m})}\leq K_{4,1}.  \label{limsupL-upper}
\end{equation}%
Finally, for any $r>0$ small enough, there always exists an integer $m$ such
that $2^{-m}<r<2^{-m+1}$ and $(\ref{limsupL-upper})$ is applicable. Since
the function $\phi (r)$ is increasing near $r=0,$ the result in Proposition %
\ref{Prop:LIL}\ follows from $(\ref{limsupL-upper})$ and a monotonicity
argument.
\end{proof}

\begin{theorem}
\label{Th:HM-Lower} Under the conditions of Theorem \ref{Th:Main}, there
exists a positive constant $K_{4,2},$ such that for every $\mathbf{t}\in 
\mathbb{R}^{d},$ with probability one, 
\begin{equation*}
\phi \text{-}m\left( \mathbf{T}^{-1}( \mathbf{t}) \right) \geq
K_{4,2}L\left( \mathbf{t},\mathbb{S}^{2}\right) .
\end{equation*}
\end{theorem}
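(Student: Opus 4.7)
The plan is to use the local time $L(\mathbf{t},\cdot)$ itself as a natural Borel measure on $\mathbb{S}^{2}$ that is supported on $\mathbf{T}^{-1}(\mathbf{t})$, and then apply the Rogers--Taylor upper density principle (Lemma \ref{Upper density}) with this measure and $\phi_{1}=\phi$. This is the standard way to transfer pointwise upper bounds on the $\phi$-density of the local time into lower bounds on the $\phi$-Hausdorff measure of the level set, and Proposition \ref{Prop:LIL} has been set up precisely to furnish the density bound we need.

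More concretely, I would first verify that $\phi$ belongs to the class $\Phi$ from Section \ref{Sec:Preliminaries}: it is right-continuous, monotone near $0$, with $\phi(0^{+})=0$, and $\phi(2r)/\phi(r)$ is bounded on $(0,\tfrac{1}{2}\varepsilon)$ because $\rho_{\alpha}(r)=r^{\alpha/2-1}$ is a regular power and the $\sqrt{\log|\log r|}$ factor is slowly varying. Next, since $\mu:=L(\mathbf{t},\cdot)$ is a finite Borel measure supported on $\mathbf{T}^{-1}(\mathbf{t})$ (by definition of local time via the occupation density formula (\ref{defLT})), we have $\mu(\mathbf{T}^{-1}(\mathbf{t}))=L(\mathbf{t},\mathbb{S}^{2})$. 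Proposition \ref{Prop:LIL} supplies, almost surely, the uniform pointwise bound
\begin{equation*}
\overline{D}_{\mu}^{\phi}(x)=\limsup_{r\to 0}\frac{L(\mathbf{t},D(x,r))}{\phi(2r)}\leq K_{4,1}',
\end{equation*}
after absorbing the constant $\phi(2r)/\phi(r)\leq K$ into $K_{4,1}$. Lemma \ref{Upper density} then gives
\begin{equation*}
\phi\text{-}m\bigl(\mathbf{T}^{-1}(\mathbf{t})\bigr)\geq K_{2,10}\,\mu(\mathbf{T}^{-1}(\mathbf{t}))\,\Bigl(\inf_{x\in \mathbf{T}^{-1}(\mathbf{t})}\overline{D}_{\mu}^{\phi}(x)\Bigr)^{-1}\geq \frac{K_{2,10}}{K_{4,1}'}\,L(\mathbf{t},\mathbb{S}^{2}),
\end{equation*}
which is the conclusion with $K_{4,2}=K_{2,10}/K_{4,1}'$.

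The main subtlety I anticipate is the quantifier issue in Proposition \ref{Prop:LIL}: its proof establishes the density bound at $x$ only for $L(\mathbf{t},d\nu)$-almost every $x$ (via a Borel--Cantelli argument on the integrals $\mathbb{E}\int_{\mathbb{S}^{2}}f_{m}^{n}L(\mathbf{t},d\nu)$), whereas Lemma \ref{Upper density} asks for an infimum over \emph{all} $x\in E$. The fix is routine: let $E^{\ast}=\{x\in\mathbb{S}^{2}:\overline{D}_{\mu}^{\phi}(x)>K_{4,1}'\}$, so $\mu(E^{\ast})=0$ and $\mu(\mathbf{T}^{-1}(\mathbf{t})\setminus E^{\ast})=L(\mathbf{t},\mathbb{S}^{2})$; apply Lemma \ref{Upper density} to the Borel set $E=\mathbf{T}^{-1}(\mathbf{t})\setminus E^{\ast}$, on which the density bound holds everywhere, and finally use monotonicity $\phi\text{-}m(\mathbf{T}^{-1}(\mathbf{t}))\geq \phi\text{-}m(\mathbf{T}^{-1}(\mathbf{t})\setminus E^{\ast})$. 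One also needs to note that the exceptional probability-zero event can be chosen independent of $\mathbf{t}$ on a countable dense set and then extended to all $\mathbf{t}$ via the joint continuity of $L(\mathbf{t},D)$ from Lemma \ref{Lem:Local Time}; aside from this bookkeeping, the argument is direct.
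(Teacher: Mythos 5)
Your proposal is correct and follows essentially the same route as the paper: take $\mu=L(\mathbf{t},\cdot)$ as the natural measure on the level set, invoke Proposition \ref{Prop:LIL} for the upper $\phi$-density bound, discard the exceptional set where the density bound fails (your $E^{\ast}$ is the paper's $D_{0}$, which carries zero local time), and apply Lemma \ref{Upper density} to what remains. The only difference is that you add an unnecessary step about a countable dense set of values $\mathbf{t}$ --- the theorem only claims an a.s.\ statement for each fixed $\mathbf{t}$, so no such uniformization is required.
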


\begin{proof}
Let 
\begin{equation*}
D_{0}=\left\{ x\in \mathbb{S}^{2}:\ \limsup_{r\rightarrow 0}\frac{L\left( 
\mathbf{t},D( x,r) \right) }{\phi ( r) }>K_{4,1}\right\} \text{ .}
\end{equation*}
it is readily seen that $D_{0}$ is a Borel set and $L\left( \mathbf{t}%
,D_{0}\right) =0$ almost surely, in view of Proposition \ref{Prop:LIL}.
Therefore, we have almost surely 
\begin{equation*}
\phi \text{-}m\left( \mathbf{T}^{-1}( \mathbf{t}) \right) \geq \phi \text{-}%
m\left( \mathbf{T}^{-1}( \mathbf{t}) \cap (\mathbb{S}^{2}\backslash D_{0})
\right) \geq \frac{K_{2,10}}{K_{4,1}}L\left( \mathbf{t},\mathbb{S}%
^{2}\backslash D_{0}\right) ,
\end{equation*}
in view of Lemma \ref{Upper density}. Let $K_{4,2}=\frac{K_{2,10}}{K_{4,1}}$%
, then the result in Theorem \ref{Th:HM-Lower} is obtained by the fact that $%
L\left( t,\mathbb{S}^{2}\right) =L\left( \mathbf{t},\mathbb{S}^{2}\backslash
D_{0}\right) $ almost surely.
\end{proof}

\subsection{Upper Bound for the Hausdorff Measure}

Now we start working toward the upper bound for the exact Hausdorff measure
of the level set $\mathbf{T}^{-1}( \mathbf{t}).$

One important ingredient for establishing the upper bound for the exact
Hausdorff measure of the level sets of $\mathbf{T}$ is the following
Proposition \ref{Smooth}; the statement is similar to Proposition 4.1 of 
\cite{Talagrand95}, but indeed much stronger. For any $x\in \mathbb{S}^{2}$
and any $r_{0}\in \left( 0,\delta _{0}\right) ,$ let us consider the event $%
\Omega \left( x,r_{0},C\right) $ defined by 
\begin{eqnarray}
\exists \ r &\in & ( r_{0}^{2},r_{0}) \ \text{such that }\sup_{d_{\mathbb{S}%
^{2}}( x,y) <r}\left\Vert \mathbf{T}( x) -\mathbf{T}( y) \right\Vert \leq
2Cw( r)  \notag \\
&&\left. \text{and }L\left( \mathbf{T}( x) ,D( x,r)\right) >\frac{\pi }{C}%
\phi ( r) \right\} ,  \label{def:SmoothSet}
\end{eqnarray}
where $w( r) =\rho _{\alpha }\left( r/\sqrt{\log \left\vert \log
r\right\vert }\right) ,$ and $L\left( \mathbf{T}( x) ,D(x,r) \right) $ is
the local time at $\mathbf{T}( x) $ defined in the formula $( \ref{defLT}) $
in section \ref{Sec:Preliminaries}. We have the following result:

\begin{proposition}
\label{Smooth} Under the conditions of Theorem \ref{Th:Main}, there exists a
positive constant $K_{4,3}$ depending on $\alpha ,\ d$ and $K_{0},$ such
that for any $r_{0} \in ( 0,\delta _{0}) $ and $x\in \mathbb{S}^{2},$ we
have 
\begin{equation*}
\mathbb{P}\left\{ \Omega _{r_{0}}\left( x,r_{0},K_{4,3}\right) \right\} >1-%
\frac{1}{\left\vert \log r_{0}\right\vert ^{2}}.
\end{equation*}
\end{proposition}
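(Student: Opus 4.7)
The plan is to find, with probability $\geq 1 - |\log r_0|^{-2}$, a scale $r \in (r_0^2, r_0)$ simultaneously witnessing the oscillation bound and the local-time bound in \eqref{def:SmoothSet}. I would discretize the interval on a geometric grid $r_k = r_0^{1 + k/N}$, $k = 1, \ldots, N$, where $N$ is a suitable polynomial in $\log|\log r_0|$ times $|\log r_0|^2$, and define
$$G_k := \Big\{\sup_{y \in D(x, r_k)} \|\mathbf{T}(y) - \mathbf{T}(x)\| \leq 2 C w(r_k)\Big\} \cap \Big\{L(\mathbf{T}(x), D(x, r_k)) > (\pi/C)\phi(r_k)\Big\}.$$
The three main ingredients are a small-ball lower bound for the oscillation, a deterministic argument converting small oscillation into large local time, and a quasi-independence step across scales powered by the strong local non-determinism of Corollary \ref{C2'}.

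For the first ingredient, I would apply Lemma \ref{DudleyLB} to $T_j(y) - T_j(x)$ on $D(x, r_k)$. Using the entropy bound $N_{d_{T_0}}(D(x, r_k), \epsilon) \leq K r_k^2/\epsilon^{4/(\alpha-2)}$ from the proof of Lemma \ref{LemTupper}, and the key identity $w(r)^{4/(\alpha-2)} = r^2/\log|\log r|$, one computes $\Psi(Cw(r_k)) \asymp C^{-4/(\alpha-2)} \log|\log r_k|$, whence
$$\mathbb{P}\Big\{\sup_{y \in D(x, r_k)} \|\mathbf{T}(y) - \mathbf{T}(x)\| \leq 2 C w(r_k)\Big\} \geq (\log|\log r_0|)^{-K_0}$$
for some $K_0 = K_0(C, \alpha, d)$. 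For the second ingredient, on this oscillation event the occupation measure $\mu_{D(x, r_k)}$ is supported in $B(\mathbf{T}(x), 2Cw(r_k)) \subset \mathbb{R}^d$ of Lebesgue volume $\asymp (Cw(r_k))^d$, so the identity $\int L(\mathbf{s}, D(x, r_k))\,d\mathbf{s} = \nu(D(x, r_k)) \geq c\, r_k^2$ forces the average of $L$ on this ball to be at least a constant multiple of $\phi(r_k)/C^d$. Invoking the joint H\"older continuity of $L$ in its first argument coming from Lemma \ref{Lem:Local Time} via a moment/Chebyshev bound, the oscillation of $L(\cdot, D(x, r_k))$ over the ball is of smaller order than $\phi(r_k)$ except on an event of probability $\leq (\log|\log r_0|)^{-2K_0}$. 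Taking $C = K_{4,3}$ large enough therefore yields $\mathbb{P}(G_k) \geq \tfrac{1}{2}(\log|\log r_0|)^{-K_0}$.

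For the quasi-independence step, I would condition on $\mathcal{F}_k := \sigma\{T_0(y): d_{\mathbb{S}^2}(x, y) \geq r_k\}$. By applying Corollary \ref{C2'} along a countable dense subset of the conditioning region and then passing to the full $\sigma$-algebra, the centered conditional field on the smaller disk $D(x, r_{k+1})$ still satisfies the same metric-entropy bound and non-degeneracy estimate with constants independent of the conditioning, so the single-scale estimate applies conditionally to give $\mathbb{P}(G_{k+1} \mid \mathcal{F}_k) \geq \tfrac{1}{2}(\log|\log r_0|)^{-K_0}$ almost surely. Iterating the conditioning yields
$$\mathbb{P}\Big(\bigcap_{k=1}^{N} G_k^c\Big) \leq \Big(1 - \tfrac{1}{2}(\log|\log r_0|)^{-K_0}\Big)^N \leq \exp\!\Big(-\tfrac{N}{2}(\log|\log r_0|)^{-K_0}\Big) \leq |\log r_0|^{-2},$$
once $N$ is the stated polynomial, which still fits comfortably in $(r_0^2, r_0)$ for small $r_0$.

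The hardest step will be promoting, in the second ingredient, the assertion ``average of $L$ on $B(\mathbf{T}(x), 2Cw(r_k))$ is large'' to a pointwise lower bound at $\mathbf{T}(x)$ itself. This requires the H\"older modulus of $L(\cdot, D(x, r_k))$ coming from Lemma \ref{Lem:Local Time} to be genuinely of smaller order than the target $\phi(r_k)/C^d$ at displacement $w(r_k)$; the $\sqrt{\log|\log r|}$ slack built into the definition of $w$ (as opposed to plain $\rho_\alpha$) is precisely what makes this quantitatively possible, but balancing the exponents $\eta$ and $\gamma$ from Lemma \ref{Lem:Local Time} against the scale $w(r_k)$ uniformly across all $k$, while also verifying that the conditional Gaussian field inherits the SLND constants from Corollary \ref{C2'}, constitutes the main technical labor.
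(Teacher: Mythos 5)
Your overall architecture (geometric grid of scales in $(r_0^2,r_0)$, a per-scale small-ball/local-time estimate, and a product bound over scales) matches the paper's, but the step that powers the product bound has a genuine gap. You claim that, after conditioning on $\mathcal{F}_k=\sigma\{T_0(y):d_{\mathbb{S}^2}(x,y)\ge r_k\}$, Corollary \ref{C2'} lets you rerun the single-scale estimate to get $\mathbb{P}(G_{k+1}\mid\mathcal{F}_k)\ge \tfrac12(\log|\log r_0|)^{-K_0}$ \emph{almost surely}. SLND controls conditional \emph{variances}, not conditional \emph{means}: given the field outside $D(x,r_k)$, the law of $\mathbf{T}$ on $D(x,r_{k+1})$ is Gaussian with a random, non-zero mean function whose oscillation on $D(x,r_{k+1})$ is of typical order $\rho_\alpha(r_k)$ and is certainly not a.s.\ bounded by $Cw(r_{k+1})$; so the conditional small-ball probability for the event $\{\sup\|\mathbf{T}(y)-\mathbf{T}(x)\|\le 2Cw(r_{k+1})\}$ is \emph{not} bounded below uniformly in $\omega$, and the iteration $\mathbb{P}(\cap_k G_k^c)\le(1-p)^N$ does not follow. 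This is precisely why the paper decomposes in \emph{frequency} rather than in space: it assigns to each scale $r_k$ a disjoint spectral band $[L_k,U_k]$, so the band-limited fields $\mathbf{T}^{L_k,U_k}$ are exactly independent across $k$ (orthogonality of the $a_{\ell m}$ plus Gaussianity), phrases the per-scale event in terms of $\mathbf{T}^{L_k,U_k}$ and the band-limited local time $L^{L_k,U_k}$, multiplies only those probabilities, and controls the discrepancies $\mathbf{T}-\mathbf{T}^{L_k,U_k}$ and $L-L^{L_k,U_k}$ by Lemmas \ref{LemTtail} and \ref{LemLTtail} with error probabilities that are \emph{summed}, not multiplied. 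Some such decomposition into genuinely independent pieces is the missing idea in your proposal.

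Two secondary problems. First, your per-scale probability is miscomputed: with $\Psi(Cw(r))\asymp C^{-4/(\alpha-2)}\log|\log r|$, Lemma \ref{DudleyLB} gives $\exp(-K\,C^{-4/(\alpha-2)}\log|\log r|)=|\log r|^{-K C^{-4/(\alpha-2)}}$, a power of $|\log r_0|$, not of $\log|\log r_0|$. This is still workable (take $C$ large so the exponent is, say, $1/4$, as the paper does), but it forces the number of usable scales to be $\asymp|\log r_0|/\log B$ with a fixed ratio $B>1$ between consecutive scales. Second, your grid $r_k=r_0^{1+k/N}$ with $N$ a polynomial in $|\log r_0|$ makes consecutive radii differ by the factor $r_0^{1/N}=\exp(-|\log r_0|/N)\to1$, so the disks $D(x,r_k)$ and $D(x,r_{k+1})$ nearly coincide and no decorrelation mechanism (spatial or spectral) can make the corresponding events approximately independent; the scales must be geometrically separated by a bounded ratio. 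The second ingredient of your plan (small oscillation forces large occupation density near $\mathbf{T}(x)$, then H\"older continuity of $L(\cdot,D)$ from Lemma \ref{Lem:Local Time} transfers this to the point $\mathbf{T}(x)$) is sound and is essentially what the paper does via the events $\mathcal{F}_{k,3}$ and $\mathcal{F}_{k,4}$.
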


The proof of Proposition \ref{Smooth}\ is quite involved and will be
presented in Section \ref{Sec:PropSmooth}. We now apply this proposition to
prove the following upper bound.

\begin{theorem}
\label{Th:HM-Upper} Under the conditions of Theorem \ref{Th:Main}, there
exists a constant $K_{4,4}>0$ depending on $\alpha ,\ d$ and $K_{0}$ such
that for every $\mathbf{t}\in \mathbb{R}^{d},$ with probability one, 
\begin{equation*}
\phi \text{-}m( \mathbf{T}^{-1}( \mathbf{t}) ) \leq K_{4,4}L\left( \mathbf{t}%
,\mathbb{S}^{2}\right) .
\end{equation*}
\end{theorem}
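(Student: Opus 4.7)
The plan is to combine Proposition \ref{Smooth} with the Fourier representation of local time in Lemma \ref{Lem:Local Time} and a Vitali-type covering argument on $\mathbb{S}^{2}$. The mechanism is as follows: for $x\in \mathbf{T}^{-1}(\mathbf{t})$ we have $\mathbf{T}(x) = \mathbf{t}$, so whenever the event $\Omega(x, r_0, K_{4,3})$ of Proposition \ref{Smooth} holds, one obtains a radius $r\in(r_0^{2}, r_0)$ with $L(\mathbf{t}, D(x, r)) > (\pi/K_{4,3})\phi(r)$. This local lower bound on the local time, coupled with countable additivity of $L(\mathbf{t},\cdot)$ on disjoint balls, will be what converts $\sum_i \phi(r_i)$ into $L(\mathbf{t}, \mathbb{S}^{2})$.

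To identify which $x$ admit such good balls at arbitrarily small scales, I would apply Proposition \ref{Smooth} with $r_0 = 2^{-n}$, giving $\mathbb{P}(G_n^c(x)) \leq 1/(n\log 2)^{2}$ where $G_n = \{x : \Omega(x, 2^{-n}, K_{4,3}) \text{ holds}\}$. Using the Fourier representation \eqref{repLT} together with a Gaussian conditioning argument analogous to the moment estimate in the proof of Proposition \ref{Prop:LIL}, a Fubini computation gives $\mathbb{E}[L(\mathbf{t}, G_n^c)] \leq C/n^{2}$, which is summable. Borel--Cantelli then implies that almost surely, for $L(\mathbf{t},\cdot)$-almost every $x\in\mathbb{S}^{2}$ there is an $n_0(x,\omega)$ with $x\in G_n$ for all $n\geq n_0$; denote this good set by $G^{\ast}$. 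For each such $x$ and each large $n$, fix a witnessing radius $r_n(x)\in(2^{-2n}, 2^{-n})$. Applying the standard $5r$-covering lemma on $\mathbb{S}^{2}$ extracts a disjoint subcollection $\{D(x_i, r_i)\}$ with $\mathbf{T}^{-1}(\mathbf{t})\cap G^{\ast}\subseteq\bigcup_i D(x_i, 5 r_i)$ and $\max_i r_i$ arbitrarily small, and the doubling property of $\phi$ yields
\begin{equation*}
\phi\text{-}m(\mathbf{T}^{-1}(\mathbf{t})\cap G^{\ast}) \leq \sum_i \phi(10 r_i) \leq K \sum_i \phi(r_i) \leq \frac{K K_{4,3}}{\pi}\sum_i L(\mathbf{t}, D(x_i, r_i)) \leq \frac{K K_{4,3}}{\pi} L(\mathbf{t}, \mathbb{S}^{2}).
\end{equation*}

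The main obstacle I anticipate is disposing of the residual level set $\mathbf{T}^{-1}(\mathbf{t})\setminus G^{\ast}$: by construction it has $L(\mathbf{t},\cdot)$-measure zero, yet its Hausdorff $\phi$-measure is not a priori zero. To remove it, I would invoke the upper-density companion to Lemma \ref{Upper density} from \cite{RogTaylor}: if $\overline{D}^{\phi}_{\mu}(x)$ is bounded below by a positive constant on a Borel set $E$, then $\phi\text{-}m(E)\leq K\mu(E)$. For $x\in G^{\ast}$ the existence of good radii $r_n(x)\to 0$ already gives $\overline{D}^{\phi}_{L(\mathbf{t},\cdot)}(x) > 0$. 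Promoting the density lower bound from ``$L(\mathbf{t},\cdot)$-a.e.'' to ``$\phi\text{-}m$-a.e.\ point of $\mathbf{T}^{-1}(\mathbf{t})$'' should be achievable by iterating Proposition \ref{Smooth} over a finer combinatorial family of scales, mirroring the Barak--Mountford refinement \cite{BarakMount} of the Talagrand--Xiao scheme adapted to $\mathbb{S}^{2}$. This combinatorial step is where I expect the bulk of the delicate work to lie, and it is why the upper bound in Theorem \ref{Th:Main} is harder than the matching lower bound.
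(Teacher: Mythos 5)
There is a genuine gap, and you have in fact located it yourself: the residual set $\mathbf{T}^{-1}(\mathbf{t})\setminus G^{\ast}$. Your first half (Vitali covering of the good set, converting $\sum_i\phi(r_i)$ into $L(\mathbf{t},\mathbb{S}^2)$ via the local-time lower bound from Proposition \ref{Smooth}) is sound in spirit and matches the role of the family $\mathcal{H}_{1,p}$ of good cells in the paper. But your proposed disposal of the bad set does not work as sketched. The upper-density companion of Rogers--Taylor requires $\overline{D}^{\phi}_{L(\mathbf{t},\cdot)}(x)\geq c>0$ at \emph{every} (or $\phi$-$m$-a.e.) point of the set you want to bound, and Proposition \ref{Smooth} only gives a \emph{pointwise probability} bound $1-|\log r_0|^{-2}$; since $\mathbf{T}^{-1}(\mathbf{t})$ is an uncountable random set, no union bound or Borel--Cantelli argument upgrades this to a simultaneous statement at all points of the level set. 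Establishing the density lower bound $\phi$-$m$-a.e.\ on $\mathbf{T}^{-1}(\mathbf{t})$ is essentially equivalent to the theorem itself, so deferring it to an unspecified ``finer combinatorial family of scales'' leaves the proof incomplete. There is also a quantitative problem earlier: to get $\mathbb{E}[L(\mathbf{t},G_n^c)]$ summable you must correlate the random set $G_n^c$ with the random measure $L(\mathbf{t},\cdot)$; Cauchy--Schwarz only yields a bound of order $\sqrt{\mathbb{P}(G_n^c(x))}\asymp 1/n$, which is not summable, so even the good/bad dichotomy in terms of local-time measure needs repair.

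The paper's proof resolves exactly this difficulty by a different mechanism: it never tries to show the bad points have small $\phi$-measure via a density theorem. Instead it measures the bad set $R_p^c$ in the \emph{deterministic} Lebesgue measure $\nu$ (so a plain Fubini plus Markov gives $\sum_p\mathbb{P}\{\nu(R_p^c)>4\pi 2^{1-p}\}<\infty$, with no Cauchy--Schwarz loss), covers $R_p^c$ by Voronoi cells of the \emph{smallest} admissible diameter $2^{-2^{p}}$, bounds their number $M_p$ by $\nu(R_p^c)/\nu(\text{cell})$, and then controls $\sum_{A\in\mathcal{H}_{2,p}}\phi(|A|)$ using the a priori upper bound $L(\mathbf{t},A)\leq K_{4,1}\phi(2^{-2^{p}})$ from Proposition \ref{Prop:LIL} together with the oscillation estimate of Lemma \ref{LemTupper} (the event $\Omega_{p,2}$) and the continuity of $\mathbf{s}\mapsto L(\mathbf{s},\cdot)$ (the event $\Omega_{p,3}$). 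The counting shows the bad cells' total $\phi$-content is dominated by the good cells' contribution. This explicit covering-and-counting of the exceptional set is the content of the Baraka--Mountford/Talagrand scheme you allude to, and it is the step your proposal is missing.
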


\begin{proof}
Recall the event $\Omega (x,r_{0},C)$ defined in $(\ref{def:SmoothSet}),$\
for all integers $p\geq 1,$ let 
\begin{equation*}
R_{p}=\left\{ 
\begin{array}{c}
x\in \mathbb{S}^{2}:\exists r\in \left[ 2^{-2^{p}},2^{-2^{p-1}}\right] ,%
\text{ such that }L\left( \mathbf{T}(x),D(x,r)\right) >\frac{\pi }{K_{4,3}}%
\phi (r) \\ 
\text{and }\sup_{d_{\mathbb{S}^{2}}(x,y)\leq r}\left\Vert \mathbf{T}(x)-%
\mathbf{T}(y)\right\Vert \leq 2K_{4,3}w(r)%
\end{array}%
\right\}
\end{equation*}%
and 
\begin{equation*}
\Omega _{p,1}=\left\{ \omega :\nu (R_{p})\geq \nu (\mathbb{S}^{2})(1-\frac{1%
}{2^{p-1}})\right\} .
\end{equation*}%
In words, $\Omega _{p,1}$ is the event that "a large portion of $\mathbb{S}%
^{2}$ consists of points at which $\mathbf{T}$ has the smallest
oscillation". Recall that $\nu \left( \mathbb{S}^{2}\right) =4\pi ,$ thus
the complement of $\Omega _{p,1}$ is 
\begin{equation*}
\Omega _{p,1}^{c}=\left\{ \nu \left( R_{p}\right) <4\pi \left( 1-\frac{1}{%
2^{p-1}}\right) \right\} =\left\{ \nu \left( R_{p}^{c}\right) >4\pi \cdot 
\frac{1}{2^{p-1}}\right\} .
\end{equation*}%
By Markov's inequality and Fubini's theorem, we have 
\begin{eqnarray*}
\mathbb{P}\left\{ \Omega _{p,1}^{c}\right\} &\leq &\frac{2^{p-1}}{4\pi }%
\mathbb{E}(\nu (R_{p}^{c}))=\frac{2^{p-1}}{4\pi }\int_{\mathbb{S}^{2}}%
\mathbb{P}\left\{ y\in R_{p}^{c}\right\} d\nu (y) \\
&\leq &\frac{2^{p-1}}{4\pi }\int_{\mathbb{S}^{2}}\frac{1}{2^{2(p-1)}}d\nu
(y)\leq \frac{1}{2^{p-1}},
\end{eqnarray*}%
where in the second inequality we have used the Proposition \ref{Smooth}.
Therefore, 
\begin{equation}
\sum_{p=1}^{\infty }\mathbb{P}\left\{ \Omega _{p,1}^{c}\right\} <\infty .
\label{Omega1c}
\end{equation}%
Before going to the next stage, we construct \emph{Voronoi} cells on $%
\mathbb{S}^{2}$ for every integer $k\geq 1$ (see \cite{MPbook} as a
reference). For $k=1,$ let $\Xi _{1}=\left\{ x_{1,1},x_{1,2}\right\} $ where 
$x_{1,1}$ and $x_{1,2}$ are north and south poles respectively, and 
\begin{equation*}
\mathcal{V}_{1}(x_{1,i})=\{y\in \mathbb{S}^{2}:d(y,x_{1,i})\leq
d(y,x_{1,j}),\ j\neq i,j=1,2\}
\end{equation*}%
for $i=1,2.$ For $k\geq 2,$ suppose $\Xi _{k-1}$ is the centers of \emph{%
Voronoi} cells of level $k-1$ which has been chosen. Now choose a set of
points $\Xi _{k,i}=\left\{ x_{k},_{i_{1}}...,x_{k,i_{j}}\right\} \in 
\mathcal{V}_{k-1}\left( x_{k-1,i}\right) $ for each $i=1,...,N_{k-1}$ with $%
N_{k-1}$ the cadinality of $\Xi _{k-1},$ such that for any $x_{k,i_{j}}\neq
x_{k,i_{j^{\prime }}^{\prime }},$ $2^{-k}\leq
d(x_{k,i_{j}},x_{k,i_{j^{\prime }}^{\prime }})\leq 2^{1-k}$. The \emph{%
Voronoi cells} of order $k$ is then defined as follows: 
\begin{equation*}
\mathcal{V}_{k}(x_{k,i_{l}})=\{y\in \mathcal{V}%
_{k-1}(x_{k-1,i}):d(y,x_{k,i_{l}})\leq d(y,x_{k,i_{l^{\prime }}}),\text{ for
any }i_{l^{\prime }}\neq i_{l},\}.
\end{equation*}%
Label the points in $\Xi _{k,1},...,\Xi _{k,N_{k-1}}$ by $%
x_{k,1},....,x_{k,N_{k}}$ with $N_{k}$ the cardinality of the union set $%
\cup _{i}\Xi _{k,i},$ and denote by $\Xi _{k}=\left\{
x_{k,1},....,x_{k,N_{k}}\right\} .$ The procedure of constructing \emph{%
Voronoi cells} of order $k$ and higher can be iterated. Obviously, these
cells are nested for different $k^{\prime }s,$ and non-overlapping with $%
\mathbb{S}^{2}=\cup _{i}\mathcal{V}_{k}(x_{k,i})$ for every $k$. The cell $%
\mathcal{V}_{k}(x_{k,i})$ is called "good" if 
\begin{equation}
\sup_{x,y\in \mathcal{V}_{k}(x_{k,i})}\left\Vert \mathbf{T}(x)-\mathbf{T}%
(y)\right\Vert \leq 2K_{4,3}w(2^{-k})  \label{Ineq-H1,p}
\end{equation}%
as well as 
\begin{equation*}
L\left( \mathbf{T}(x_{k,i}),\mathcal{V}_{k}(x_{k,i})\right) >\frac{\pi }{%
K_{4,3}}\phi (2^{-k}).
\end{equation*}%
By Proposition \ref{Smooth}, we can find a family $\mathcal{H}_{1,p}$ of
good cells for some order $k\in \left[ 2^{p-1},2^{p}\right] $ that covers $%
R_{p};$ we denote by $\mathcal{H}_{2,p}$ the family of cells of order $p$
that are not contained in any cell of $\mathcal{H}_{1,p},$ that is $\mathcal{%
H}_{2,p}\subset R_{p}^{c}.$ Therefore, when $\Omega _{p,1}$ occurs, the
number of cells in $\mathcal{H}_{2,p}$ is at most $M_{p}$ with $%
2^{-2^{p+1}}M_{p}\leq \frac{4\pi }{2^{p-1}}$, that is 
\begin{equation}
M_{p}\leq 2^{2^{p+1}}\frac{4\pi }{2^{p-1}}.  \label{Num-H2,p}
\end{equation}%
Let $\varepsilon =C_{4,2}(2^{-2^{p}})^{(\alpha /2-1)d}2^{p/2}$ with some
positive constant $C_{4,2}$ to be determined. Define $\Omega _{p,2}$ be the
event that under the condition that 
\begin{equation*}
\sup_{y,y^{\prime }\in \mathcal{V}_{p}(x_{p,i})}\left\Vert \mathbf{T}(y)-%
\mathbf{T}(y^{\prime })\right\Vert \leq C_{4,2}(2^{-2^{p}}2^{p/2})^{\alpha
/2-1},
\end{equation*}%
it also follows that 
\begin{equation*}
L\left( \mathbf{T}(x_{p,i}),D(x_{p,i},2^{-2^{p}})\right) \geq \frac{\pi
(2^{-2^{p}})^{2}}{\varepsilon }
\end{equation*}%
holds for each cell of order $p$ in $\mathbb{S}^{2}$". Applying Lemma \ref%
{LemTupper}, we obtain that for some constants $C_{4,2}>K_{2,7}\sqrt{K_{2,1}}%
,$ the probability $\mathbb{P}\left\{ \Omega _{p,2}^{c}\right\} $ is bounded
by 
\begin{eqnarray*}
&&\sum_{x_{p,j}\in \mathcal{H}_{2,p}\cap \Xi _{p}}\mathbb{P}\left\{ L\left( 
\mathbf{T}(x_{p,j}),D(x_{p,j},2^{-2^{p}})\right) <\frac{\nu
(D(x_{p,j},2^{-2^{p}}))}{\epsilon }\right\} \\
&\leq &\sum_{x_{p,j}\in \mathcal{H}_{2,p}\cap \Xi _{p}}\mathbb{P}\left\{ \mu
\left( B\left( \mathbf{T}(x_{p,j}),\frac{\epsilon }{2}\right) ,D\right) <\nu
(D(x,2^{-2^{p}}))\right\} \\
&\leq &\sum_{x_{p,j}\in \mathcal{H}_{2,p}\cap \Xi _{p}}\mathbb{P}\left\{
\sup_{y,y^{\prime }\in D(x_{p,j},2^{-2^{p}})}\left\Vert \mathbf{T}(y)-%
\mathbf{T}(y^{\prime })\right\Vert >C_{4,2}(2^{-2^{p}})^{\alpha
/2-1}2^{p/2}\right\} \\
&\leq &M_{p}\exp \left\{ -2^{p}\right\} \leq 32\pi \exp \left\{ -2^{p}(1-\ln
2)-p\ln 2\right\} .
\end{eqnarray*}%
Hence we have 
\begin{equation}
\sum_{p=1}^{\infty }\mathbb{P}\left\{ \Omega _{p,2}^{c}\right\} <\infty .
\label{Omega2c}
\end{equation}%
Finally, let $\Omega _{p,3}$ be the event: for any $x\in \mathbb{S}^{2}$ and 
$\mathbf{t\in \mathbb{R}}^{d},$ 
\begin{equation}
\sup_{\left\Vert \mathbf{s}-\mathbf{t}\right\Vert \leq
2K_{4,3}w(2^{-2^{p}})}\left\vert L\left( \mathbf{s},D(x,2^{-2^{p}})\right)
-L\left( \mathbf{t},D(x,2^{-2^{p}})\right) \right\vert \leq \frac{\pi }{%
2K_{4,3}}\phi (2^{-2^{p}}).  \label{def:Omega3}
\end{equation}%
By Lemma \ref{Lem:Local Time}, we have for any even integer $n\geq 2$ 
\begin{eqnarray*}
\mathbb{P}\left\{ \Omega _{p,3}^{c}\right\} &\leq &\frac{%
(K_{2,3})^{n}(n!)^{2-\eta /2}\left\vert 2K_{4,3}w(2^{-2^{p}})\right\vert
^{n\gamma }(2^{-2^{p}})^{(n-1)\eta +2}}{\left\vert \frac{\pi }{2K_{4,3}}\phi
(2^{-2^{p}})\right\vert ^{n}} \\
&\leq &\frac{C_{4,3}(2^{-2^{p}})^{2-\eta -n\gamma (\alpha /2-1)}}{%
p^{n(\alpha /2-1)(d+\gamma )}},
\end{eqnarray*}%
where the constant $C_{4,3}$ is positive and depends on $n,$ $K_{2,3}$ and $%
K_{4,3}.$ Let $n$ be large enough so that $n\gamma \left( \alpha /2-1\right)
\geq 2-\eta ,$ then we obtain 
\begin{equation}
\sum_{p=1}^{\infty }\mathbb{P}\left\{ \left( \Omega _{p,3}\right)
^{c}\right\} <\infty .  \label{Omega3c}
\end{equation}

Now set $\mathcal{H}_{p}=\mathcal{H}_{1,p}\cup \mathcal{H}_{2,p}$. This
family is well-defined for all $p\geq 1,$ and it is a non-overlapping
covering of $\mathbb{S}^{2}.$ Set 
\begin{eqnarray*}
r_{A} &=&2K_{4,3}w\left( 2^{-2^{p}}\right) ,\text{ if }A\in \mathcal{H}_{1,p}%
\text{ and }A\text{ is of order }k\in \left[ p,2p\right] , \\
r_{A} &=&C_{4,2}(2^{-2^{p}})^{(\alpha /2-1)d}2^{p/2},\text{ if }A\in 
\mathcal{H}_{2,p}.
\end{eqnarray*}%
For each cell $A\in \mathcal{H}_{p}$, denote by $\left\vert A\right\vert $
the diameter of $A.$ We pick its center point $x_{A},$ and define $\Omega
_{A}$ the set of events such that 
\begin{equation*}
\left\{ \omega :\left\Vert \mathbf{T}\left( x_{A}\right) -\mathbf{t}%
\right\Vert \leq r_{A}\text{, }L\left( \mathbf{t},A\right) \geq \left\{ 
\begin{array}{cc}
\frac{\pi }{2K_{4,3}}\phi \left( \frac{1}{2}\left\vert A\right\vert \right) ,
& \text{ for }A\in \mathcal{H}_{1,p} \\ 
\frac{\pi }{C_{4,2}2^{\frac{p}{2}}p^{\frac{d}{4}(\alpha -2)}}\phi \left( 
\frac{1}{2}\left\vert A\right\vert \right) , & \text{ for }A\in \mathcal{H}%
_{2,p}%
\end{array}%
\right. \right\} .
\end{equation*}%
Also, denote by $\mathcal{F}_{p}$ the subfamily of $\mathcal{H}_{p}$ such
that 
\begin{equation*}
\mathcal{F}_{p}=\left\{ A\in \mathcal{H}_{p}:\Omega _{A}\text{ occurs}%
\right\} .
\end{equation*}%
Letting $\Omega _{p}=\Omega _{p,1}\cap \Omega _{p,2}\cap \Omega _{p,3},$ we
have 
\begin{equation}
\sum_{p=1}^{\infty }\mathbb{P}\left\{ \Omega _{p}^{c}\right\} <\infty ,
\label{Omegac}
\end{equation}%
in view of (\ref{Omega1c}), (\ref{Omega2c}) and (\ref{Omega3c}). We claim
that 
\begin{equation}
\text{For }p\text{ large enough, on }\Omega _{p}\text{, }\mathcal{F}_{p}%
\text{ covers }\mathbf{T}^{-1}\left( \mathbf{t}\right) .  \label{cover1}
\end{equation}

To see this, consider that for any $x\in \mathbf{T}^{-1}(\mathbf{t}),$ if $x$
belongs to some cell $A$ of order $k$ in $\mathcal{H}_{1,p},$ that is $%
2^{p-1}\leq k\leq 2^{p},$ then recall (\ref{Ineq-H1,p}), we have that, for $%
p $ large enough, 
\begin{equation*}
\left\Vert \mathbf{T}(x_{A})-\mathbf{t}\right\Vert =\left\Vert \mathbf{T}%
(x_{A})-\mathbf{T}(x)\right\Vert \leq r_{A},
\end{equation*}%
and 
\begin{eqnarray*}
L(\mathbf{t},A) &\geq &L\left( \mathbf{T}(x_{A}),A\right) -\left\vert L(%
\mathbf{t},A)-L\left( \mathbf{T}(x_{A}),A\right) \right\vert \\
&\geq &\frac{\pi }{K_{4,3}}\phi (\frac{1}{2}\left\vert A\right\vert )-\frac{%
\pi }{2K_{4,3}}\phi \left( \frac{1}{2}\left\vert A\right\vert \right) \geq 
\frac{\pi }{2K_{4,3}}\phi \left( \frac{1}{2}\left\vert A\right\vert \right) ,
\end{eqnarray*}%
in view of $\left( \ref{def:Omega3}\right) .$ Otherwise, $x$ belongs to some
cell $A$ of order $2^{p}$ in $\mathcal{H}_{2,p},$ and the inequality above
is then readily seen in view of the definition of $\Omega _{p,2}.$ Therefore 
$A\in \mathcal{F}_{p},$ and (\ref{cover1}) is proved.

Now we see that on $\Omega _{p},$ 
\begin{equation*}
\sum_{A\in \mathcal{H}_{1,p}}\phi \left( \left\vert A\right\vert \right)
\leq \pi ^{-1}K_{4,3}2^{3-d\left( \alpha /2-1\right) }\sum_{A\in \mathcal{H}%
_{1,p}}L(\mathbf{t},A),
\end{equation*}%
and 
\begin{equation*}
\sum_{A\in \mathcal{H}_{2,p}}\phi \left( \left\vert A\right\vert \right)
\leq \pi ^{-1}C_{4,2}2^{3-d(\alpha /2-1)+p/2}p^{d(\alpha -2)/4}\sum_{A\in 
\mathcal{H}_{2,p}}L(\mathbf{t},A).
\end{equation*}%
In the meantime, recalling Proposition \ref{Prop:LIL} and the inequality (%
\ref{Num-H2,p}), we have 
\begin{equation*}
\begin{split}
& 2^{p/2}p^{d(\alpha -2)/4}\sum_{A\in \mathcal{H}_{2,p}}L(\mathbf{t},A)\leq
4\pi K_{4,1}2^{p/2}p^{d(\alpha -2)/4}\frac{2^{2^{p+1}}}{2^{p-1}}\phi
(2^{-2^{p}}) \\
& \leq 4\pi K_{4,1}\frac{p^{d(\alpha -2)/4}}{2^{p/2-1}}\left[
2^{2^{p+1}}\phi (2^{-2^{p}})\right] \leq \frac{1}{1-2^{1-p}}\sum_{A\in 
\mathcal{H}_{1,p}}\phi \left( \left\vert A\right\vert \right) ,
\end{split}%
\end{equation*}%
with $p$ large enough such that $4\pi K_{4,1}\frac{p^{d(\alpha -2)/4}}{%
2^{p/2-1}}<1.$ Thus 
\begin{equation*}
\sum_{A\in \mathcal{H}_{p}}\phi \left( \left\vert A\right\vert \right) \leq
K_{4,4}\sum_{A\in \mathcal{H}_{1,p}}L(\mathbf{t},A)\leq K_{4,4}\sum_{A\in 
\mathcal{H}_{p}}L(\mathbf{t},A),
\end{equation*}%
with $K_{4,4}>0$ depending on $\alpha ,d,$ $K_{4,3}$ and $C_{4,2}.$ Hence by
the Borel-Cantelli Lemma, almost surely we have 
\begin{equation*}
\phi \text{-}m\left( \mathbf{T}^{-1}(\mathbf{t})\right) \leq
\liminf_{p\rightarrow \infty }\sum_{A\in \mathcal{H}_{p}}\phi \left(
\left\vert A\right\vert \right) \leq K_{4,4}L\left( \mathbf{t},\mathbb{S}%
^{2}\right) ,
\end{equation*}%
in view of (\ref{Omegac}) and (\ref{cover1}), which completes the proof.
\end{proof}

\bigskip

\begin{proof}[\textbf{Proof of \protect\ref{Th:Measure}\ in Theorem \protect
\ref{Th:Main}}]
The proof follows immediately by combining Theorems \ref{Th:HM-Lower} and %
\ref{Th:HM-Upper}.
\end{proof}

\subsection{Two Examples}

In this subsection, we illustrate Theorem \ref{Th:Main} by two examples.

\begin{example}
Let $H>0$ with $H\neq 1.$ Consider the centered isotropic Gaussian random
field $W_{H}^{0}=\left\{ W_{H}^{0}(x),\ x\in \mathbb{S}^{2}\right\} $ with
covariance 
\begin{equation*}
K_{H}\left( d_{\mathbb{S}^{2}}(x,y)\right) =Cov\left(
W_{H}^{0}(x),W_{H}^{0}(y)\right) =\int_{\mathbb{R}^{+}}\psi ^{(H)}\left( d_{%
\mathbb{S}^{2}}(x,y),r\right) r^{2H-3}dr,
\end{equation*}%
where 
\begin{equation*}
\psi ^{(H)}=\left\{ 
\begin{array}{cl}
\psi & if\ 0<H<1 \\ 
\psi -4\pi & if\ H>1%
\end{array}%
\right.
\end{equation*}%
with 
\begin{equation*}
\psi \left( d_{\mathbb{S}^{2}}(x,y),r\right) =\int_{\mathbb{S}^{2}}\mathrm{1}%
\left\{ d_{\mathbb{S}^{2}}(x,z)<r\right\} \mathrm{1}\left\{ d_{\mathbb{S}%
^{2}}(y,z)<r\right\} d\nu (z).
\end{equation*}%
Notice that if $d_{\mathbb{S}^{2}}(x,y)\geq 2r,\ $then $\psi \left( d_{%
\mathbb{S}^{2}}(x,y),r\right) =0.$ So we will just consider the case $d_{%
\mathbb{S}^{2}}(x,y)\in (0,2r).$ In this circumstance, careful calculations
show that, the covariance $K_{H}\left( d_{\mathbb{S}^{2}}(x,y)\right) $ can
be represented as follows: 
\begin{equation*}
K_{H}\left( d_{\mathbb{S}^{2}}(x,y)\right) =\sum_{\ell =1}^{+\infty }\frac{%
2\ell +1}{4\pi }(C_{\ell }+d_{\ell })P_{\ell }\left( \left\langle
x,y\right\rangle \right) ,
\end{equation*}%
where 
\begin{equation*}
C_{4,4}^{-1}\ell ^{-\left( 2H+2\right) }\leq C_{\ell }\leq C_{4,4}\ell
^{-\left( 2H+2\right) }.
\end{equation*}%
and $d_{2\ell }=0$ with%
\begin{equation*}
C_{4,5}^{-1}\left( 2\ell +1\right) ^{-2}\left( 2\pi \right) ^{-2\ell -1}\leq
d_{2\ell +1}\leq C_{4,5}\left( 2\ell +1\right) ^{-2}\left( 2\pi \right)
^{-2\ell -1},
\end{equation*}%
for $\ell =1,2,....$

It is readily seen that the spherical random field $W_{H}^{0}$ satisfies
Condition(\textbf{A}) with angular power spectrum index $\alpha =2H+2$ if $%
0<H<\frac{1}{2},$ and $\alpha \geq 4$ if $H\geq \frac{1}{2}$ and $H\neq 1.$
Therefore, the level sets of the random field $\mathbf{W}_{H}=\left(
W_{H}^{1},...,W_{H}^{d}\right) $ with $W_{H}^{1},...,W_{H}^{d}$ independent
copies of of $W_{H}^{0},$ is nonempty $a.s.$ if and only if $0<H<1$ and $%
Hd<2.$ Moreover, its exact Hausdorff measure of the level set $\mathbf{W}%
_{H}^{-1}(\mathbf{t})$ at any value $\mathbf{t\in \mathbb{R}}^{d}$ can be
controlled upper and lower by its local time $L\left( \mathbf{t},\mathbb{S}%
^{2}\right) ,$ $a.s.$ with Hausdorff dimension $2-Hd$ for $H\neq \frac{1}{2}$
in view of Theorem \ref{Th:Main} in this paper and Theorem 1.2 in \cite%
{LanXiao}. See \cite{EsIstas10} for more information on the statistical
properties of the random field $W_{H}^{0}$.
\end{example}

\begin{example}
Consider the zero-mean, isotropic Gaussian random field $T_{0}=$ $\left\{
T_{0}(x),x\in \mathbb{S}^{2}\right\} $ with covariance 
\begin{equation*}
C(x,y)=1-\frac{2}{\pi }d_{\mathbb{S}^{2}}(x,y)
\end{equation*}%
for any $x,y\in \mathbb{S}^{2}$ (c.f. \cite{CX16}, Example 3.3). By careful
calculations, we can prove that, 
\begin{equation*}
C(x,y)=C_{1}P_{1}\left( \left\langle x,y\right\rangle \right) +\sum_{\ell
=1}^{+\infty }\left( 4\ell +3\right) C_{\ell }P_{2\ell +1}\left(
\left\langle x,y\right\rangle \right) ,
\end{equation*}%
where for each $\ell >1,$ the angular power spectrum%
\begin{equation*}
C_{\ell }=\sum_{n=\ell }^{\infty }\frac{1}{\left( 2n+2\ell +3\right) !!}%
\frac{\left[ \left( 2n-1\right) !!\right] ^{2}}{\left( 2n-2\ell \right) !!},
\end{equation*}%
and moreover, 
\begin{equation*}
\frac{1}{8\sqrt{e}}\ell ^{-1}\leq C_{\ell }\leq \frac{1}{4}\ell ^{-1}.
\end{equation*}%
That is, its angular power spectrum index $\alpha =1,$ which is less than $2$%
. Thus, the condition (\ref{Specon}) does not hold, which leads to that
Theorem \ref{Th:Main} can not be applied to this spherical random field.
Actually, even the representation (\ref{rapT}) does not hold in the sense of
(\ref{rapT-MSC}) and (\ref{rapT-L2}), as the right-hand side of (\ref%
{rapT-MSC}) and (\ref{rapT-L2}) do not converge.
\end{example}

\section{Proof of Proposition \protect\ref{Smooth} \label{Sec:PropSmooth}}

For any two integers $1\leq L<U\leq \infty ,$ we introduce the band-limited
random field $\mathbf{T}^{L,U}=\left( T_{1}^{L,U},...,T_{d}^{L,U}\right) $
on $\mathbb{S}^{2},$ where $T_{k}^{L,U},k=1,...,d,$ are independent copies
of 
\begin{equation}  \label{defT_LU}
T_{0}^{L,U}( x) =\sum_{\ell =L}^{U}\sum_{m=-\ell }^{\ell }a_{\ell m}Y_{\ell
m}( x) ,\ x\in \mathbb{S}^{2}.
\end{equation}
Observe that $T_{0}^{L,U}( x) $ and $T_{k}^{L^{\prime },U^{\prime}}( x) $
are independent for $L<U<L^{\prime }<U^{\prime }$ in view of the
orthogonality properties of the Fourier components of the field $T_{0}(x)$
and the assumption of Gaussianity. Let $D( z,r) $ be an open disk on $%
\mathbb{S}^{2},$ we start by proving the following lemma.

\begin{lemma}
\label{LemTmain} Under the conditions of Theorem \ref{Th:Main}, there exists
a positive constants $K_{5,1}$ depending on $\alpha ,\ d$ and $K_{0},$ such
that for any $0<r<\delta _{0}$ and $\varepsilon >0,$ we have 
\begin{equation}
\mathbb{P}\left\{ \sup_{x,y\in D(z,r)}\left\Vert \mathbf{T}^{L,U}(x)-\mathbf{%
T}^{L,U}(y)\right\Vert \leq \varepsilon \right\} \geq \exp \left( -K_{5,1}%
\frac{r^{2}}{\varepsilon ^{4/(\alpha -2)}}\right) ,  \label{Tmain}
\end{equation}
\end{lemma}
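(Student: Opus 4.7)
The plan is to reduce the vector-valued small-ball estimate to a one-dimensional one using independence of the components, and then to apply Talagrand's small-ball lower bound (Lemma \ref{DudleyLB}) with the canonical metric of $T_0^{L,U}$.

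First I would bound the canonical metric of the truncated scalar field. Since $T_0^{L,U}$ is obtained from $T_0$ by retaining a contiguous block of Fourier modes, Schoenberg's expansion gives
\begin{equation*}
\sigma^2_{L,U}(x,y) = 2\sum_{\ell=L}^{U}\frac{2\ell+1}{4\pi}C_\ell\bigl(1-P_\ell(\cos d_{\mathbb{S}^2}(x,y))\bigr)\leq \sigma^2(x,y),
\end{equation*}
because every summand is non-negative. Combined with Lemma \ref{Lem:C1C2}, this yields $d_{T_0^{L,U}}(x,y)\leq \sqrt{K_{2,1}}\,\rho_\alpha(d_{\mathbb{S}^2}(x,y))$ for nearby points, uniformly in $L<U$. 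Note this is the key reason the final constant $K_{5,1}$ will not depend on $L, U$.

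Next I would convert this into a metric-entropy estimate. To cover $D(z,r)$ by $d_{T_0^{L,U}}$-balls of radius $\varepsilon$, it suffices to use spherical disks of geodesic radius $(\varepsilon/\sqrt{K_{2,1}})^{2/(\alpha-2)}$. A standard area/packing argument on $\mathbb{S}^2$ then yields
\begin{equation*}
N_{d_{T_0^{L,U}}}\bigl(D(z,r),\varepsilon\bigr)\leq K\,\frac{r^2}{\varepsilon^{4/(\alpha-2)}}=:\Psi(\varepsilon),
\end{equation*}
and $\Psi$ clearly satisfies the doubling condition in Lemma \ref{DudleyLB} with $K_{2,5}=2^{4/(\alpha-2)}$, since it is a negative power of $\varepsilon$. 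Applying Lemma \ref{DudleyLB} to each scalar field $T_i^{L,U}$ restricted to $D(z,r)$ then gives, for every $u>0$,
\begin{equation*}
\mathbb{P}\Bigl\{\sup_{x,y\in D(z,r)}\bigl|T_i^{L,U}(x)-T_i^{L,U}(y)\bigr|\leq u\Bigr\}\geq \exp\!\Bigl(-K\,r^2/u^{4/(\alpha-2)}\Bigr).
\end{equation*}

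Finally, because $\|\mathbf{u}\|\leq \sqrt{d}\,|\mathbf{u}|_\infty$, the event in \eqref{Tmain} contains
\begin{equation*}
\bigcap_{i=1}^{d}\Bigl\{\sup_{x,y\in D(z,r)}\bigl|T_i^{L,U}(x)-T_i^{L,U}(y)\bigr|\leq \varepsilon/\sqrt{d}\Bigr\},
\end{equation*}
and these $d$ events are independent by the independence of $T_1,\ldots,T_d$. Multiplying the $d$ one-dimensional small-ball bounds and absorbing the $d$-dependent factors into a single constant $K_{5,1}=K_{5,1}(\alpha,d,K_0)$ completes the proof.

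The only delicate point is verifying the doubling hypothesis for the gauge $\Psi$ and guaranteeing that Lemma \ref{DudleyLB} is applied with a $\Psi$ that really dominates the entropy for all scales $\varepsilon>0$ (not just small ones); this is handled by enlarging the constant in $\Psi$ so that $\Psi\geq 1$ whenever a single ball covers $D(z,r)$. Everything else is a routine metric-entropy/independence computation, and no part of the argument uses $L$ or $U$ except through the monotonicity $\sigma^2_{L,U}\leq\sigma^2$, so the bound is uniform in the truncation.
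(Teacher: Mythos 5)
Your proof is correct and follows essentially the same route as the paper: monotonicity of the truncated variogram ($\sigma^2_{L,U}\leq\sigma^2$), the upper bound from Lemma \ref{Lem:C1C2}, the resulting entropy estimate $N\lesssim r^2/\varepsilon^{4/(\alpha-2)}$, and Talagrand's small-ball bound (Lemma \ref{DudleyLB}). The only difference is cosmetic: the paper applies Lemma \ref{DudleyLB} once to the vector field through the canonical metric $d_{\mathbf{T}^{L,U}}=\sqrt{d}\,d_{T_0^{L,U}}$, while you apply it to each scalar component and multiply using independence; both yield the same bound up to the value of $K_{5,1}$, and your componentwise reduction is if anything slightly more faithful to the scalar statement of Lemma \ref{DudleyLB}.
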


\begin{proof}
The canonical metric $d_{\mathbf{T}^{a,b}}$ on $\mathbb{S}^{2}$ is defined
by 
\begin{equation*}
d_{\mathbf{T}^{L,U}}( x,y) =\sqrt{\mathbb{E}\left\Vert \mathbf{T}^{L,U}( x) -%
\mathbf{T}^{L,U}( y) \right\Vert ^{2}} =\sqrt{d\mathbb{E}\left\vert
T_{0}^{L,U}( x) -T_{0}^{L,U}(y) \right\vert ^{2}}.
\end{equation*}
By the lower bound in the inequalities $( \ref{variogram}) $ and the fact
that $d_{\mathbf{T}^{L,U}}( x,y) \leq d_{\mathbf{T}}( x,y) $ for any $x,y\in 
\mathbb{S}^{2}$ with $d_{\mathbb{S}^{2}}( x,y) <\delta $, we have 
\begin{equation}  \label{ineq:dTmain}
d_{\mathbf{T}^{L,U}}( x,y) \leq \sqrt{dK_{2,1}}\rho _{\alpha}( x,y) .
\end{equation}
In the meantime, recall the metric entropy, it follows immediately that 
\begin{equation*}
N_{d_{\mathbf{T}^{L,U}}}\left( D( z,r) ,\epsilon \right) \leq C_{5,1}\frac{%
r^{2}}{\epsilon ^{4/( \alpha -2) }}.
\end{equation*}
where $C_{5,1}$ is a positive constant depending on $\alpha ,\ d$ and $%
K_{2,1}.$ Therefore, the approximation $( \ref{Tmain}) $\ is derived by
exploiting Lemma \ref{DudleyLB}.
\end{proof}

Let $\mathbf{T}^{\Delta }$ be the random field defined by 
\begin{equation*}
\mathbf{T}^{\Delta }=\mathbf{T}-\mathbf{T}^{L,U}.
\end{equation*}
Meanwhile, for any $r\in ( 0,\delta _{0}) ,$ take $L=\left[B^{-\beta }r^{-1}%
\right] $ and $U=\left[ B^{1-\beta }r^{-1}\right] ,$ where the constants $%
B,\beta $ satisfy $\beta \in (0,\alpha /2-1]$ and $\max\left\{ 1,\left(
4K_{2,4}\right) ^{\frac{1}{\beta \left( 4-\alpha \right) }}\right\}
<B<r^{-1} $ with $K_{2,4}$ the constant in Lemma \ref{Lem:SumCl}. Here $%
\left[ \cdot \right] $ denotes integer part as usual. Then we have the
following estimate on the tail probability of maximal oscillation of $%
\mathbf{T}^{\Delta }.$

\begin{lemma}
\label{LemTtail} Under the conditions of Theorem \ref{Th:Main}, there exists
a positive constant $K_{5,2}$ depending on $\alpha ,\ d$ and $K_{0},$ such
that for any $0<r<\delta _{0},$ $0<\beta \leq \frac{\alpha }{2}-1$ and 
\begin{equation*}
u > K_{5,2}B^{-\beta \left( 2-\frac{\alpha }{2}\right) }\sqrt{\log B}\ r^{%
\frac{\alpha }{2}-1},
\end{equation*}
we have 
\begin{equation*}
\mathbb{P}\left\{ \sup_{x,y\in D( z,r) }\left\Vert \mathbf{T}^{\Delta }( x) -%
\mathbf{T}^{\Delta }( y) \right\Vert\geq u\right\} \leq \exp \left( -\frac{1%
}{K_{5,2}}\frac{B^{\beta (4-\alpha ) }u^{2}}{r^{\alpha -2}}\right) .
\end{equation*}
\end{lemma}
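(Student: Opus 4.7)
The plan is to apply Lemma \ref{DudleyUB} to the centered scalar field $T_j^\Delta(\cdot)-T_j^\Delta(z)$ on $D(z,r)$ for each coordinate, then convert to the vector-oscillation statement by standard symmetrization (each coordinate's oscillation is at most $4\sup_x(T_j^\Delta(x)-T_j^\Delta(z))$) and a union bound over the $d$ independent coordinates. The essential input is a sharp two-scale control of the canonical metric $d_\Delta$ of $T_0^\Delta$; I would work with the decomposition $T_0^\Delta = T_0^{0,L-1}+T_0^{U+1,\infty}$ into independent low- and high-frequency blocks, applying Lemma \ref{Lem:SumCl} to each.

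The lower bound on $B$ ensures $L\cdot 2r\leq 2B^{-\beta}<K_{2,4}$, so that Lemma \ref{Lem:SumCl} applies to the low-frequency block. For the high-frequency block I would sharpen Lemma \ref{Lem:SumCl} by splitting $\sum_{\ell>U}$ at $\ell\sim 1/\theta$ and using $1-P_\ell(\cos\theta)\leq\min(2,\ell(\ell+1)\theta^{2}/2)$; since $\alpha\in(2,4)$, the two resulting tails both give $O(\theta^{\alpha-2})$ when $\theta<1/U$. Combining these estimates yields, for $\theta=d_{\mathbb{S}^{2}}(x,y)\leq 2r$,
$$\text{Var}\bigl(T_0^\Delta(x)-T_0^\Delta(y)\bigr)\leq K\bigl[L^{4-\alpha}\theta^{2}+\min\{\theta^{\alpha-2},\,U^{2-\alpha}\}\bigr].$$
The diameter therefore satisfies $\bar d_\Delta\leq K[L^{(4-\alpha)/2}r+U^{(2-\alpha)/2}]$, and the hypothesis $\beta\leq\alpha/2-1$, equivalent to $(1-\beta)(\alpha-2)\geq\beta(4-\alpha)$, dominates the second summand by the first, giving $\bar d_\Delta^{2}\leq KB^{-\beta(4-\alpha)}r^{\alpha-2}$.

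For the covering number I would split at $\epsilon_{0}=KU^{(2-\alpha)/2}$: on $\epsilon\geq\epsilon_{0}$ the Lipschitz part of $d_\Delta$ governs, yielding $N_{d_\Delta}(D(z,r),\epsilon)\leq Kr^{2}L^{4-\alpha}/\epsilon^{2}$, while on $\epsilon<\epsilon_{0}$ the H\"older part gives $N_{d_\Delta}(D(z,r),\epsilon)\leq Kr^{2}/\epsilon^{4/(\alpha-2)}$. With the scale-free substitutions $\eta=\epsilon r^{-(\alpha-2)/2}$ on the H\"older side and $s=rL^{(4-\alpha)/2}/\epsilon$ on the Lipschitz side, the two pieces of Dudley's integral come out as
$$I_{1}\lesssim U^{(2-\alpha)/2}\sqrt{\log(rU)^{2}}\lesssim B^{-(1-\beta)(\alpha-2)/2}\sqrt{\log B}\,r^{(\alpha-2)/2},\qquad I_{2}\lesssim L^{(4-\alpha)/2}r,$$
and the inequality $\beta\leq\alpha/2-1$ once again dominates $I_{1}$ by $B^{-\beta(4-\alpha)/2}\sqrt{\log B}\,r^{(\alpha-2)/2}$, which is exactly the threshold in the statement. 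Plugging these into Lemma \ref{DudleyUB}, taking $u$ above a fixed multiple of the entropy integral so that the $K_{2,7}$ prefactor is absorbed, and assembling the $d$ scalar estimates produces the claimed tail $\exp(-B^{\beta(4-\alpha)}u^{2}/(K_{5,2}r^{\alpha-2}))$.

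\textbf{Main obstacle.} The decisive point is the Dudley integral in the H\"older regime $\epsilon<\epsilon_{0}$: relying on Lemma \ref{Lem:SumCl} alone (uniform bound $U^{2-\alpha}$ on the high-frequency block) would give $I_{1}\lesssim U^{(2-\alpha)/2}$ without the $\sqrt{\log B}$ enhancement or the correct $B$-scaling. That factor ultimately reflects the $(rU)^{2}=B^{2(1-\beta)}$ effective degrees of freedom of the high-frequency block on the disk, and the condition $\beta\leq\alpha/2-1$ is exactly what forces the high-frequency floor $U^{(2-\alpha)/2}$ to sit below the low-frequency fluctuation scale $L^{(4-\alpha)/2}r$, collapsing the threshold, diameter, and exponent onto the single scale $B^{-\beta(4-\alpha)/2}r^{(\alpha-2)/2}$.
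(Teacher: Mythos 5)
Your argument is correct and, at its core, follows the same route as the paper: apply Lemma \ref{DudleyUB} to $\mathbf{T}^{\Delta}$ on $D(z,r)$, control the canonical metric through Lemma \ref{Lem:SumCl} with $L=[B^{-\beta}r^{-1}]$, $U=[B^{1-\beta}r^{-1}]$, and use $\beta\le\frac{\alpha}{2}-1$ (equivalently $(1-\beta)(\alpha-2)\ge\beta(4-\alpha)$) to collapse the low- and high-frequency contributions onto the single scale $B^{-\beta(4-\alpha)/2}r^{(\alpha-2)/2}$. Where you diverge is the entropy estimate, and your closing diagnosis of the ``main obstacle'' is mistaken. The paper needs neither the sharpened high-frequency bound $\min\{\theta^{\alpha-2},U^{2-\alpha}\}$ nor the two-regime covering count: it keeps the crude H\"older bound $N_{d_{\mathbf{T}^{\Delta}}}(D(z,r),\epsilon)\le C_{5,1}\,r^{2}\epsilon^{-4/(\alpha-2)}$, valid for all $\epsilon$ since $d_{\mathbf{T}^{\Delta}}\le d_{\mathbf{T}}\le\sqrt{dK_{2,1}}\,\rho_{\alpha}$, and refines only the \emph{diameter}, $\overline{d}\le f(B)\,r^{\alpha/2-1}$ with $f(B)=\sqrt{dK_{2,4}}\,B^{-\beta(4-\alpha)/2}$. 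The truncated Dudley integral $\int_{0}^{f(B)r^{\alpha/2-1}}\sqrt{\log\bigl(C r^{2}\epsilon^{-4/(\alpha-2)}\bigr)}\,d\epsilon$ then already comes out as $\asymp f(B)\sqrt{\log(1/f(B))}\,r^{\alpha/2-1}\asymp B^{-\beta(4-\alpha)/2}\sqrt{\log B}\,r^{\alpha/2-1}$, because $\sqrt{\log N(\epsilon)}$ evaluated at the reduced upper limit is of order $\sqrt{\log(1/f(B))}\asymp\sqrt{\log B}$; the $\sqrt{\log B}$ enhancement is not lost by ``relying on Lemma \ref{Lem:SumCl} alone.'' So your two-scale refinement buys nothing here (your $I_{1}+I_{2}$ is bounded by the paper's single integral up to constants) while costing you a re-derivation of a strengthened Lemma \ref{Lem:SumCl} and a justification of the split covering numbers. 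The remaining ingredients of your write-up are fine: the reduction to scalar coordinates by symmetrization and a union bound only perturbs constants absorbed into $K_{5,2}$ (the paper applies Lemma \ref{DudleyUB} to the vector oscillation directly), and your final step --- choosing $K_{5,2}$ so that the threshold dominates a fixed multiple of the entropy integral and reading off $\exp\bigl(-u^{2}/(4K_{2,7}^{2}\overline{d}^{\,2})\bigr)$ with $\overline{d}^{\,2}\le|f(B)|^{2}r^{\alpha-2}$ --- is exactly the paper's conclusion.
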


\begin{proof}
Like in many other arguments in this paper, we start by considering a
suitable Gaussian metric $d_{T_{\Delta }}$ defined on $D( z,r)\subset 
\mathbb{S}^{2}$ by 
\begin{equation*}
d_{\mathbf{T}^{\Delta }}( x,y) :=\left[ \mathbb{E}\left\Vert \mathbf{T}%
^{\Delta }( x) -\mathbf{T}^{\Delta }( y)\right\Vert ^{2}\right] ^{1/2}.
\end{equation*}
Once again, we have $d_{\mathbf{T}^{\Delta }}( x,y) \leq \sqrt{dK_{2,1}}\rho
_{\alpha }( x,y).$ A simple metric entropy argument yields 
\begin{equation*}
N_{d_{\mathbf{T}^{\Delta }}}\left( D( z,r) ,\varepsilon \right) \leq C_{5,1}%
\frac{r^{2}}{\epsilon ^{4/( \alpha -2) }}.
\end{equation*}
More precisely, recall 
\begin{equation*}
d_{\mathbf{T}^{\Delta }}^{2}( x,y) =d\left( \sum_{\ell=0}^{L-1}+\sum_{\ell
=U+1}^{\infty }\right) \frac{2\ell +1}{4\pi }C_{\ell}\left\{ 1-P_{\ell
}(\cos \theta )\right\} .
\end{equation*}
where $\theta =d_{\mathbb{S}^{2}}( x,y).$ Let $0<\theta <r,$ by Lemma \ref%
{Lem:SumCl}, we obtain 
\begin{eqnarray*}
d_{\mathbf{T}^{\Delta }}^{2}( x,y) &\leq & dK_{2,4}\left(L^{4-\alpha }\theta
^{2}+U^{2-\alpha }\right) \\
&\leq & dK_{2,4}\left( B^{-\beta ( 4-\alpha ) }+B^{-( 1-\beta) ( \alpha -2)
}\right) r^{\alpha -2} \\
&\leq & dK_{2,4}B^{-\beta ( 4-\alpha ) }r^{\alpha -2} :=\left\vert f\left(
B\right) \right\vert ^{2}r^{\alpha -2},
\end{eqnarray*}
where 
\begin{equation}  \label{def:f(B)}
f\left( B\right) =:\sqrt{dK_{2,4}}B^{-\frac{\beta }{2}\left( 4-\alpha\right)
}.
\end{equation}
Hence, if we let 
\begin{equation*}
\overline{d}:=\sup \left\{ d_{\mathbf{T}^{\Delta }}( x,y) :x,y\in D( z,r)
\right\} ,
\end{equation*}
which obviously satisfies $\overline{d}\leq f( B) r^{\frac{\alpha }{2}-1}$,
then 
\begin{equation*}
\begin{split}
& \int_{0}^{\overline{d}}\sqrt{\log N_{d_{\mathbf{T}^{\Delta }}}\left(D(
z,r) ,\epsilon \right) }\ d\epsilon \leq \int_{0}^{f(B) r^{\frac{\alpha }{2}%
-1}}\sqrt{\log C_{5,1}\frac{r^{2}}{\epsilon^{4/( \alpha -2) }}}\ d\epsilon \\
& \leq \frac{2}{\sqrt{\alpha -2}}\sqrt{dK_{2,1}}r^{\frac{\alpha }{2}-1}\int_{%
\sqrt{\log \frac{\sqrt{dK_{2,1}}}{f( B) }}}^{+\infty
}ud\left(-e^{-u^{2}}\right) \\
& \leq \frac{4f( B) }{\sqrt{\alpha -2}}\sqrt{\log \frac{\sqrt{dK_{2,1}}}{f(
B) }}\ r^{\frac{\alpha }{2}-1}.
\end{split}%
\end{equation*}
By Lemma \ref{DudleyUB}, we derive that, for any $u>C_{5,2}B^{-\beta
(2-\alpha /2) }\sqrt{\log B}\ r^{\alpha /2-1}$ with some constant $C_{5,2}$
depending on $\alpha $ and $K_{2,7},$ it holds that 
\begin{equation*}
\begin{split}
& \mathbb{P}\left\{ \sup_{x,y\in D( z,r) }\left\Vert \mathbf{T}^{\Delta }(
x) -\mathbf{T}^{\Delta }( y) \right\Vert\geq u\right\} \\
& \leq \mathbb{P}\left\{ \sup_{x,y\in D( z,r) }\left\Vert \mathbf{T}^{\Delta
}( x) -\mathbf{T}^{\Delta }( y) \right\Vert \geq K_{2,7}\bigg(\frac{u}{%
2K_{2,7}}+\int_{0}^{\overline{d}}\sqrt{\log N_{d_{\mathbf{T}^{\Delta
}}}\left( D( z,r) ,\varepsilon \right) }d\varepsilon \bigg)\right\} \\
& \leq \exp \left( -\frac{u^{2}}{4K_{2,7}^{2}\left\vert f( B)\right\vert
^{2}r^{\alpha -2}}\right) .
\end{split}%
\end{equation*}
The proof is then completed.
\end{proof}

Now recalling the representation of local time (\ref{repLT}), we introduce
the following random field for any $D=D\left( x,r\right) \subseteq \mathbb{S}%
^{2},$ $x\in \mathbb{S}^{2}$ 
\begin{equation}  \label{repLT-LU}
L^{L,U}\left( \mathbf{T}\left( x\right) ,D\right) =\frac{1}{2\pi }\int_{%
\mathbb{R}^{d}\times D}\exp \left\{ i\boldsymbol{\xi }^{T}\left( \mathbf{T}%
^{L,U}( y) -\mathbf{T}^{L,U}( x) \right) \right\} d\nu ( y) d\boldsymbol{\xi 
}.
\end{equation}
Obviously, $L^{L,U}\left( \mathbf{T}( x) ,D\right) $ and $L^{L^{\prime
},U^{\prime }}\left( \mathbf{T}( x) ,D\right) $ are independent for $%
L<U<L^{\prime }<U^{\prime }$. We can now prove that

\begin{lemma}
\label{LemLTtail} Under the conditions of Theorem \ref{Th:Main}, there exist
positive constants $K_{5,3}$ and $B_{0}$ depending on $K_{1,0},$ $d$ and $%
\alpha ,$ such that for any $x\in \mathbb{S}^{2}$ fixed, $r\in
\left(0,\delta _{0}\right) $ and $B>B_{0}$, we have for any $A>0,$ 
\begin{equation*}
\mathbb{P}\left\{ \left\vert L\left( \mathbf{T}( x) ,D\right)-L^{L,U}\left( 
\mathbf{T}( x) ,D\right) \right\vert \geq \frac{1}{A}\phi ( r) \right\} \leq
K_{5,3}\frac{A^{2}\cdot B^{-\kappa\beta \left( 4-\alpha \right) }}{\left(
\log \left\vert \log r\right\vert\right) ^{( \alpha -2) d}},
\end{equation*}
where $\kappa =\min \left\{ 2,\frac{4-d( \alpha -2) }{(\alpha -2) }\right\} $
and $\phi ( \cdot ) $ is the function defined in (\ref{def:psi}).
\end{lemma}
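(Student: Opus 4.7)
The plan is to estimate the second moment $\mathbb{E}|L(\mathbf{T}(x),D) - L^{L,U}(\mathbf{T}(x),D)|^2$ and then invoke Chebyshev's inequality. The natural starting point is the decomposition $\mathbf{T} = \mathbf{T}^{L,U} + \mathbf{T}^{\Delta}$; since the coefficients $\{a_{\ell m}\}$ are orthogonal across disjoint frequency bands and jointly Gaussian, $\mathbf{T}^{L,U}$ and $\mathbf{T}^{\Delta}$ are independent. Combining the Fourier representations $(\ref{repLT})$ and $(\ref{repLT-LU})$, and writing $U_y := \mathbf{T}^{L,U}(y) - \mathbf{T}^{L,U}(x)$, $V_y := \mathbf{T}^{\Delta}(y) - \mathbf{T}^{\Delta}(x)$, one obtains
\begin{equation*}
L(\mathbf{T}(x),D) - L^{L,U}(\mathbf{T}(x),D) = \frac{1}{2\pi}\int_{\mathbb{R}^d\times D} e^{i\boldsymbol{\xi}^T U_y}\bigl(e^{i\boldsymbol{\xi}^T V_y}-1\bigr)\,d\nu(y)\,d\boldsymbol{\xi}.
\end{equation*}
Squaring, taking expectation and using the independence of $\mathbf{T}^{L,U}$ and $\mathbf{T}^{\Delta}$, the second moment factorizes as
\begin{equation*}
\mathbb{E}|L - L^{L,U}|^2 = \frac{1}{(2\pi)^2}\int_{D^2}\int_{\mathbb{R}^{2d}} A(\boldsymbol{\xi},\mathbf{y})\,B(\boldsymbol{\xi},\mathbf{y})\,d\boldsymbol{\xi}\,d\nu(y_1)d\nu(y_2),
\end{equation*}
where $A := \mathbb{E}\exp\{i(\boldsymbol{\xi}_1^T U_{y_1} - \boldsymbol{\xi}_2^T U_{y_2})\}$ is a Gaussian characteristic function and $B := \mathbb{E}[(e^{i\boldsymbol{\xi}_1^T V_{y_1}} - 1)(e^{-i\boldsymbol{\xi}_2^T V_{y_2}} - 1)]$ carries the truncation smallness.

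I would bound the two factors separately. For $A$, adapt Corollary \ref{C2'} to the band-limited field---removing an independent set of Fourier modes only increases conditional variances, so SLND is preserved for $\mathbf{T}^{L,U}$---to obtain an estimate of the form $|A| \leq \exp(-c|\boldsymbol{\xi}_1|^2\rho_\alpha^2(\cdot) - c|\boldsymbol{\xi}_2|^2\rho_\alpha^2(\cdot))$, in which the $\rho_\alpha$-arguments are the minima of the pairwise spherical distances that enter the relevant conditional variance, exactly as in the standard $L^2$-estimate for the local time. For $B$, the inequality $|e^{iz}-1|\leq \min(2,|z|)$ combined with Cauchy--Schwarz yields $|B|\leq \min(4, |\boldsymbol{\xi}_1||\boldsymbol{\xi}_2|\sigma_{\Delta}(y_1,x)\sigma_{\Delta}(y_2,x))$, and the variogram bound $\sigma_{\Delta}^2(y,x) \leq K_{2,4}\,B^{-\beta(4-\alpha)} r^{\alpha-2}$ extracted directly from the proof of Lemma \ref{LemTtail} is the source of the factor $B^{-\beta(4-\alpha)}$.

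The core step is the $\boldsymbol{\xi}$-integration, which must be split according to whether $|\boldsymbol{\xi}_1||\boldsymbol{\xi}_2|\sigma_{\Delta}^2$ is smaller or larger than $1$. In the small-$\boldsymbol{\xi}$ regime one uses the $|z|$-bound; Gaussian integration of $|A|$ against $|\boldsymbol{\xi}_1||\boldsymbol{\xi}_2|$ introduces additional factors of $1/\rho_\alpha$ on top of the standard $1/\rho_\alpha^d$ from $|A|$ alone, and the resulting spatial integral over $D\times D$ remains convergent provided the full truncation gain $B^{-2\beta(4-\alpha)}$ can be extracted, which requires $(\alpha-2)d<2$. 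In the complementary regime one uses the crude bound $|B|\leq 4$, and the smallness comes only from the shrunken domain of integration; after the determinantal lower bounds from SLND are integrated, this gives $B^{-\beta(4-\alpha)(4-d(\alpha-2))/(\alpha-2)}$. Taking the worse of the two exponents produces $\kappa = \min\{2, (4-d(\alpha-2))/(\alpha-2)\}$. The outer spatial integration over $D\times D$ is handled by the same determinantal argument as in the local time $L^2$-estimate of \cite{LanXiao}; after tracking the logarithmic normalizations induced by the scale $\rho_\alpha(r/\sqrt{\log|\log r|})$ in $\phi$, one arrives at $\mathbb{E}|L-L^{L,U}|^2 \leq K\phi(r)^2 B^{-\kappa\beta(4-\alpha)}/(\log|\log r|)^{(\alpha-2)d}$, and Chebyshev's inequality applied at the threshold $\phi(r)/A$ concludes the proof.

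The main obstacle is the calibration of the $\boldsymbol{\xi}$-threshold and the execution of the two-regime argument. The linear bound on $|B|$ is sharper in the small-$\boldsymbol{\xi}$ regime but fails to be integrable against $|A|$ in the spatial variables once $(\alpha-2)d$ approaches $4$, while the constant bound $4$ wastes truncation smallness in the small-$\boldsymbol{\xi}$ regime. Coordinating these two regimes against the SLND-driven decay of $A$ so as to produce the precise minimum $\kappa = \min\{2,(4-d(\alpha-2))/(\alpha-2)\}$---rather than the easier but weaker uniform exponent $\kappa=1$---is the delicate part of the argument, and explains why the lemma's exponent depends on $d$ in this piecewise fashion.
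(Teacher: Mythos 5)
Your overall strategy coincides with the paper's: bound the second moment of $L(\mathbf{T}(x),D)-L^{L,U}(\mathbf{T}(x),D)$ and apply Chebyshev, exploiting the independence of $\mathbf{T}^{L,U}$ and $\mathbf{T}^{\Delta}$ together with the variogram bound $\sigma_{\Delta}^{2}\leq K_{2,4}\left(L^{4-\alpha}\theta^{2}+U^{2-\alpha}\right)\leq |f(B)|^{2}\theta^{\alpha-2}$ coming from Lemma \ref{Lem:SumCl}. Where you diverge is in the treatment of the $\boldsymbol{\xi}$-integral: the paper integrates it out exactly (each of the four cross terms is a Gaussian characteristic function, yielding $(\det\Upsilon)^{-d/2}$ expressions) and extracts all of the cancellation from determinant perturbation identities such as $\widetilde{\Upsilon}_{a}=\Upsilon_{b}+\mathrm{diag}(\sigma_{\Delta,1}^{2},0)$, followed by a \emph{spatial} splitting of $D\times D$ into the regions $\Gamma_{x}$, $\Gamma_{y_{1}}^{1}$, $\Gamma_{y_{1}}^{2}$; it is the near-diagonal region $\Gamma_{y_{1}}^{1}$ that produces the exponent $\frac{4-d(\alpha-2)}{\alpha-2}$ and hence the minimum defining $\kappa$. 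Your two-regime split in $\boldsymbol{\xi}$ is a legitimate alternative way to organize the same cancellation, but you leave precisely this calibration unexecuted, and, as you yourself observe, the linear bound on your factor $B(\boldsymbol{\xi},\mathbf{y})$ is not spatially integrable over the whole range $(\alpha-2)d<4$; the region-by-region computation that the paper carries out is exactly the content you would still have to supply.

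There is also one concretely wrong step. You justify the Gaussian decay of $A$ by asserting that removing an independent set of Fourier modes ``only increases conditional variances,'' so that SLND passes from $\mathbf{T}$ to $\mathbf{T}^{L,U}$ for free. The inequality goes the other way: if $X=Y+Z$ with $Y$ and $Z$ independent, then for any coefficients $c_{i}$ one has $\mathbb{E}[(X(x)-\sum_{i}c_{i}X(x_{i}))^{2}]=\mathbb{E}[(Y(x)-\sum_{i}c_{i}Y(x_{i}))^{2}]+\mathbb{E}[(Z(x)-\sum_{i}c_{i}Z(x_{i}))^{2}]$, and minimizing over $c$ gives $\mathrm{Var}(Y(x)\,|\,Y(x_{1}),\dots,Y(x_{n}))\leq\mathrm{Var}(X(x)\,|\,X(x_{1}),\dots,X(x_{n}))$. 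Hence Corollary \ref{C2'} for the full field yields no lower bound for the band-limited component by monotonicity alone. The paper instead proves the needed bound quantitatively: using $\sigma_{\Delta,i}^{2}\leq 2d^{-1}|f(B)|^{2}\theta_{i}^{\alpha-2}$ and choosing $B>B_{0}$ (with $L,U$ accordingly) so that $\sigma_{b,i}^{2}\geq\frac{1}{2}\sigma_{a,i}^{2}$ and $|\varrho_{b}|\leq\frac{1}{2}|\varrho_{a}|$, one gets $\det\Upsilon_{b}\geq\frac{1}{4}\det\Upsilon_{a}$, and only then applies SLND to $\Upsilon_{a}$. This is precisely where the threshold $B_{0}$ in the statement of the lemma enters, and your argument needs this step (or an equivalent) before the bound on $A$ is valid.
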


\begin{proof}
For every $x\in \mathbb{S}^{2}$ fixed, we introduce random fields $\mathbf{Z}%
_{l}=\left( Z_{l,1},...,Z_{l,d}\right) ,$\ $l=a,b,\Delta ,$ with $Z_{a,k},$\ 
$Z_{b,k},$ and $Z_{\Delta ,k},$ $k=1,...,d,$ independent copies of $Z_{a,0},$%
\ $Z_{b,0},$ and $Z_{\Delta ,0}$ where the random fields $Z_{l,0}=\left\{
Z_{l,0}(y),y\in \mathbb{S}^{2}\right\} ,l=a,b,\Delta ,$ are defined as
follows:%
\begin{eqnarray*}
Z_{a,0}(y) &=&:T_{0}(y)-T_{0}(x),\  \\
Z_{b,0}(y) &=&:T_{0}^{L,U}(y)-T_{0}^{L,U}(x), \\
Z_{\Delta ,0}(y) &=&:Z_{a,0}(y)-Z_{b,0}(x).\ 
\end{eqnarray*}%
Recall the representations (\ref{repLT}) and (\ref{repLT-LU}), we have%
\begin{equation}
\mathbb{E}\left[ L\left( \mathbf{T}(x),D\right) -L^{L,U}(\mathbf{T}(x),D)%
\right] ^{2}=\frac{1}{(2\pi )^{2}}\int_{D^{2}}II(y_{1},y_{2})d\nu \left(
y_{1}\right) d\nu \left( y_{2}\right) ,  \label{Eq:int-II}
\end{equation}%
where 
\begin{equation*}
II(y_{1},y_{2})=\int_{\mathbb{R}^{2d}}\prod\limits_{j=1,2}\left( \mathbb{E}%
\exp \left\{ i\sum\limits_{k=1}^{d}\xi _{k,j}Z_{a,k}(y_{j})\right\} -\mathbb{%
E}\exp \left\{ i\sum\limits_{k=1}^{d}\xi _{k,j}Z_{b,k}(y_{j})\right\}
\right) d\boldsymbol{\xi .}
\end{equation*}%
Now let us focus on the term $II(y_{1},y_{2}).$ For any $y_{1},y_{2}\in 
\mathbb{S}^{2},$ denote by $\sigma _{l,i}^{2}=:\mathrm{Var}\left(
Z_{l,0}(y_{i})\right) ,\ l=a,b,\Delta ,\ i=1,2,$ and $\varrho _{l}=:\mathrm{%
Cov}\left( Z_{l,0}\left( y_{1}\right) ,Z_{l,0}\left( y_{2}\right) \right)
,l=a,b,\Delta .$ Let $\Upsilon _{l}$ be covariance matrices of $\left(
Z_{l,0}(y_{1}),Z_{l,0}(y_{2})\right) ,$ that is 
\begin{equation*}
\Upsilon _{l}=:\left[ 
\begin{array}{cc}
\sigma _{l,1}^{2} & \varrho _{l} \\ 
\varrho _{l} & \sigma _{l,2}^{2}%
\end{array}%
\right]
\end{equation*}%
for $l=a,b,\Delta .$ In the meantime, denote by 
\begin{equation*}
\widetilde{\Upsilon }_{l}=:\left[ 
\begin{array}{cc}
\sigma _{l,1}^{2} & \varrho _{b} \\ 
\varrho _{b} & \sigma _{l^{\prime },2}^{2}%
\end{array}%
\right] ,l,l^{\prime }=a,b,\ and\ l\neq l^{\prime },
\end{equation*}%
then a standard calculation yields that 
\begin{equation}
II(y_{1},y_{2})=\sum\limits_{l=a,b}\frac{1}{(\det \Upsilon _{l})^{d/2}}%
-\sum\limits_{l=a,b}\frac{1}{\left( \det \widetilde{\Upsilon }_{l}\right)
^{d/2}}  \notag
\end{equation}%
Notice that $\Upsilon _{a}=\Upsilon _{b}+\Upsilon _{\Delta },\ \widetilde{%
\Upsilon }_{a}=\Upsilon _{b}+diag\left( \sigma _{\Delta ,1}^{2},0\right) $
and $\widetilde{\Upsilon }_{b}=\Upsilon _{b}+diag\left( 0,\sigma _{\Delta
,2}^{2}\right) .$ Moreover, careful calculations yield 
\begin{equation*}
\det \Upsilon _{a}\geq \det \Upsilon _{b}+\det \Upsilon _{\Delta
}+\left\vert \sigma _{b,1}\sigma _{\Delta ,2}-\sigma _{\Delta ,1}\sigma
_{b,2}\right\vert ^{2}\geq \det \Upsilon _{b}
\end{equation*}%
\begin{equation*}
\det \widetilde{\Upsilon }_{a}=\det \Upsilon _{b}+\sigma _{\Delta
,1}^{2}\sigma _{b,2}^{2},\ and\ \det \widetilde{\Upsilon }_{b}=\det \Upsilon
_{b}+\sigma _{b,1}^{2}\sigma _{\Delta ,2}^{2}.
\end{equation*}%
Therefore, 
\begin{eqnarray}
II(y_{1},y_{2}) &\leq &\frac{2}{\left( \det \Upsilon _{b}\right) ^{d/2}}-%
\frac{1}{\left( \det \Upsilon _{b}+\sigma _{\Delta ,1}^{2}\sigma
_{b,2}^{2}\right) ^{d/2}}  \label{ineq:detR} \\
&&-\frac{1}{\left( \det \Upsilon _{b}+\sigma _{b,1}^{2}\sigma _{\Delta
,2}^{2}\right) ^{d/2}}.  \notag
\end{eqnarray}%
Now let $\theta _{i}=d_{\mathbb{S}^{2}}(y_{i},x),i=1,2,$ then 
\begin{equation*}
\sigma _{\Delta ,i}^{2}=\left( \sum_{\ell =1}^{L-1}+\sum_{\ell =U+1}^{\infty
}\right) \frac{2\ell +1}{4\pi }C_{\ell }\left\{ 1-P_{\ell }(\cos \theta
_{i})\right\} ,
\end{equation*}%
whence by exploiting Lemmas \ref{Lem:SumCl}, we obtain that 
\begin{eqnarray}
\sigma _{\Delta ,i}^{2} &\leq &K_{2,4}\left( L^{4-\alpha }\theta
_{i}^{2}+U^{2-\alpha }\right) \leq K_{2,4}\left( B^{-\beta (4-\alpha
)}+B^{-(1-\beta )(\alpha -2)}\right) \theta _{i}^{\alpha -2}  \notag \\
&\leq &2d^{-1}\left\vert f(B)\right\vert ^{2}\theta _{i}^{\alpha -2}.
\label{ineq:detRd}
\end{eqnarray}%
in view of $(\ref{def:f(B)}).$ Let $B>B_{0}$ such that $\left\vert f\left(
B_{0}\right) \right\vert ^{2}<\frac{d}{4}K_{2,1}^{-1}$ as well as $%
L=L(B_{0},r),U=U(B_{0},r)$ large enough so that $\left\vert \varrho
_{b}\right\vert \leq \frac{1}{2}\left\vert \varrho _{a}\right\vert ,$ then 
\begin{equation}
\sigma _{b,i}^{2}=\sigma _{a,i}^{2}-\sigma _{\Delta ,i}^{2}\geq \frac{1}{2}%
\sigma _{a,i}^{2}\geq \frac{1}{2}K_{2,1}^{-1}\theta _{i}^{\alpha -2},\ i=1,2,
\label{ineq:detRb}
\end{equation}%
which leads to 
\begin{equation}
\det \Upsilon _{b}=\sigma _{b,1}^{2}\sigma _{b,2}^{2}-(\varrho _{b})^{2}\geq 
\frac{1}{4}\left[ \sigma _{a,1}^{2}\sigma _{a,2}^{2}-(\varrho _{a})^{2}%
\right] =\frac{1}{4}\det \Upsilon _{a}.  \label{ineq:detRb-L}
\end{equation}%
Thus, combining inequalities (\ref{ineq:detR}), (\ref{ineq:detRd}), (\ref%
{ineq:detRb}) and (\ref{ineq:detRb-L}), we obtain 
\begin{eqnarray}
II\left( y_{1},y_{2}\right) &\leq &\frac{2}{\left( \det \Upsilon _{b}\right)
^{d/2}}-\frac{2}{\left( \det \Upsilon _{b}+2d^{-1}\left\vert f(B)\right\vert
^{2}\theta _{1}^{\alpha -2}\theta _{2}^{\alpha -2}\right) ^{d/2}}  \notag \\
&\leq &\frac{4d^{-1}C_{5,3}\left\vert f(B)\right\vert ^{2}\theta
_{1}^{\alpha -2}\theta _{2}^{\alpha -2}}{\det \Upsilon _{b}\left( \det
\Upsilon _{b}+2d^{-1}\left\vert f(B)\right\vert ^{2}\theta _{1}^{\alpha
-2}\theta _{2}^{\alpha -2}\right) ^{d/2}}  \notag \\
&\leq &\frac{2^{d+4}d^{-1}C_{5,3}\left\vert f(B)\right\vert ^{2}\theta
_{1}^{\alpha -2}\theta _{2}^{\alpha -2}}{\det \Upsilon _{a}\left( \det
\Upsilon _{a}+8d^{-1}\left\vert f(B)\right\vert ^{2}\theta _{1}^{\alpha
-2}\theta _{2}^{\alpha -2}\right) ^{d/2}}  \label{ineq:II}
\end{eqnarray}%
where the last but one inequality used the fact that $(1+x)^{d/2}\leq
1+C_{5,3}x$ for any $x\in (0,\delta _{0})$ with $C_{5,3}=\frac{d}{2}\max
\left\{ (1+\delta _{0})^{d/2-1},1\right\} .$ Recall (\ref{SLND}) in Lemma %
\ref{Lem:C1C2} and Corollary \ref{C2'}, we have 
\begin{eqnarray}
\det \Upsilon _{a} &=&\mathrm{Var}\left( Z_{a,0}(y_{1})\right) \mathrm{Var}%
\left( Z_{a,0}\left( y_{2}\right) |Z_{a}\left( y_{1}\right) \right)  \notag
\\
&\geq &K_{2,1}^{-1}K_{2,2}^{\prime }\left[ \rho _{\alpha }(\theta _{1})%
\right] ^{2}\min \left( \rho _{\alpha }^{2}(\theta _{2}),\rho _{\alpha
}^{2}\left( d_{\mathbb{S}^{2}}(y_{2},y_{1})\right) \right) .
\label{ineq:detRa}
\end{eqnarray}%
Replacing $\det \Upsilon _{a}$ in the inequality $\left( \ref{ineq:II}%
\right) $\ with the term on the right side of inequality $\left( \ref%
{ineq:detRa}\right) $ above, we obtain 
\begin{equation}
II(y_{1},y_{2})\leq C_{5,4}\frac{\left\vert f(B)\right\vert ^{2}}{\theta
_{1}^{\frac{d}{2}(\alpha -2)}}\cdot A(x_{2})  \label{ineq:II-1}
\end{equation}%
where $C_{5,4}$ is a positive constant depending on $\alpha
,d,C_{5,3},K_{2,1},\ K_{2,2},$ and 
\begin{eqnarray*}
A(x_{2}) &=&\theta _{2}^{\alpha -2}\left[ \min \left( \theta _{2}^{\alpha
-2},d_{\mathbb{S}^{2}}^{\alpha -2}(x_{2},x_{1})\right) \right] ^{-1} \\
&&\cdot \left\{ \min \left( \theta _{2}^{\alpha -2},d_{\mathbb{S}%
^{2}}^{\alpha -2}(x_{2},x_{1})\right) +8d^{-1}\left\vert f(B)\right\vert
^{2}\theta _{2}^{\alpha -2}\right\} ^{-d/2}.
\end{eqnarray*}%
Similar to the argument in the proof of Lemma 3.2 in \cite{LanXiao},\ we
define the following two sets that are disjoint except on the boundary, 
\begin{equation*}
\Gamma _{\tau }=\left\{ y\in D:d_{\mathbb{S}^{2}}(y,\tau )=\min \left\{ d_{%
\mathbb{S}^{2}}(y,\tau ^{\prime }),\tau ^{\prime }=x,y_{1}\right\} \right\}
\end{equation*}%
for $\tau =x,\ y_{1}.$ Recall $\theta _{i}=d_{\mathbb{S}%
^{2}}(y_{i},x),i=1.2, $ then it follows that 
\begin{equation*}
\begin{split}
& \int_{\Gamma _{x}}A(x_{2})d\nu (x_{2})\leq \int_{0}^{2\pi
}\int_{0}^{r(\phi )}\theta _{2}^{-(\alpha -2)d/2}\sin \theta _{2}d\theta
_{2}d\phi \\
& \leq \frac{2}{4-(\alpha -2)d}\int_{0}^{2\pi }\left[ r(\phi )\right] ^{%
\frac{1}{2}\left[ 4-(\alpha -2)d\right] }d\phi \leq C_{5,5}r^{\frac{1}{2}%
\left[ 4-(\alpha -2)d\right] }
\end{split}%
\end{equation*}%
where $C_{5,5}=\frac{4\pi }{4-\left( \alpha -2\right) d},$ and hence, 
\begin{eqnarray}
&&\left\vert f\left( B\right) \right\vert ^{2}\int_{D}\int_{\Gamma _{x}}%
\frac{A\left( y_{2}\right) }{\theta _{1}^{d\left( \alpha -2\right) /2}}d\nu
\left( y_{2}\right) d\nu \left( y_{1}\right)  \notag \\
&\leq &2\pi C_{5,5}\left\vert f\left( B\right) \right\vert ^{2}r^{\frac{1}{2}%
\left[ 4-\left( \alpha -2\right) d\right] }\int_{0}^{r}\theta _{1}^{-\frac{d%
}{2}\left( \alpha -2\right) }\sin \theta _{1}d\theta _{1}  \notag \\
&\leq &C_{5,5}\left\vert f\left( B\right) \right\vert ^{2}r^{4-\left( \alpha
-2\right) d}.  \label{ineq:II'-1}
\end{eqnarray}%
For $\Gamma _{y_{1}},$ we split it into two domains, 
\begin{equation*}
\Gamma _{y_{1}}^{1}=\left\{ y\in \Gamma _{y_{1}}:d_{\mathbb{S}^{2}}\left(
y,y_{1}\right) \leq \left( 8d^{-1}\left\vert f\left( B\right) \right\vert
^{2}\right) ^{1/\left( \alpha -2\right) }\theta _{2}\right\} ,
\end{equation*}%
and 
\begin{equation*}
\Gamma _{y_{1}}^{2}=\left\{ y\in \Gamma _{y_{1}}:d_{\mathbb{S}^{2}}\left(
y,x_{1}\right) >\left( 8d^{-1}\left\vert f\left( B\right) \right\vert
^{2}\right) ^{1/\left( \alpha -2\right) }\theta _{2}\right\} .
\end{equation*}%
Denote by $\theta _{12}=d_{\mathbb{S}^{2}}\left( y_{2},y_{1}\right) $ and
let $\left\vert f(B)\right\vert ^{2}\leq d2^{-\alpha -1}$ as well as $%
B>B_{0},$ then by the triangle inequality 
\begin{equation*}
\theta _{1}-\theta _{12}\leq \theta _{2}\leq \theta _{1}+\theta _{12},
\end{equation*}%
we have $\frac{1}{2}\theta _{1}\leq \theta _{2}\leq 2\theta _{1}$ for $%
y_{2}\in \Gamma _{y_{1}}^{1}.$ Hence, 
\begin{eqnarray*}
&&\int_{\Gamma _{y_{1}}^{1}}A(y_{2})d\nu (y_{2}) \\
&\leq &\frac{2\pi (2\theta _{1})^{\alpha -2}}{\left\{ 8d^{-1}\left\vert
f(B)\right\vert ^{2}(\theta _{1}/2)^{\alpha -2}\right\} ^{d/2}}%
\int_{0}^{2\left( 8d^{-1}\left\vert f(B)\right\vert ^{2}\right) ^{1/(\alpha
-2)}\theta _{1}}\theta _{12}^{2-\alpha }\sin \theta _{12}d\theta _{12} \\
&\leq &C_{5,6}\left\vert f(B)\right\vert ^{\frac{4-d(\alpha -2)}{(\alpha -2)}%
-2}\theta _{1}^{\frac{1}{2}\left[ 4-(\alpha -2)d\right] },
\end{eqnarray*}%
with $C_{5,6}$ a positive constant depending on $\alpha ,d$, which leads to 
\begin{eqnarray}
&&\left\vert f(B)\right\vert ^{2}\int_{D}\int_{\Gamma _{y_{1}}^{1}}\frac{%
A(y_{2})}{\theta _{1}^{d(\alpha -2)/2}}d\nu (y_{2})d\nu (y_{1})  \notag \\
&\leq &C_{5,6}\left\vert f(B)\right\vert ^{\frac{4-d(\alpha -2)}{(\alpha -2)}%
}\int_{0}^{r}\theta _{1}^{2-d(\alpha -2)}\sin \theta _{1}d\theta _{1}  \notag
\\
&\leq &\frac{C_{5,6}}{4-d(\alpha -2)}\left\vert f(B)\right\vert ^{\frac{%
4-d(\alpha -2)}{(\alpha -2)}}r^{4-d(\alpha -2)}.  \label{ineq:II'-2}
\end{eqnarray}%
Meanwhile, 
\begin{eqnarray*}
&&\int_{\Gamma _{y_{1}}^{2}}A(y_{2})d\nu (y_{2}) \\
&\leq &r^{\alpha -2}\int_{0}^{2\pi }\int_{2\left( 8d^{-1}\left\vert
f(B)\right\vert ^{2}\right) ^{1/(\alpha -2)}\theta _{1}}^{r(\phi )}\theta
_{12}^{-(\alpha -2)(d/2+1)}\sin \theta _{12}d\theta _{12}d\phi \\
&\leq &r^{\alpha -2}\int_{0}^{2\pi }\left[ \left[ r(\phi )\right]
^{2-(\alpha -2)(d/2+1)}-C_{5,7}\left\vert f(B)\right\vert ^{\frac{4-(\alpha
-2)(d+2)}{(\alpha -2)}}\right] d\phi \\
&\leq &2\pi \left( r^{\frac{1}{2}\left[ 4-(\alpha -2)d\right]
}-C_{5,7}r^{\alpha -2}\left\vert f(B)\right\vert ^{\frac{4-d(\alpha -2)}{%
(\alpha -2)}-2}\right) ,
\end{eqnarray*}%
with $C_{5,7}$ a positive constant depending on $\alpha ,d,$ and hence 
\begin{eqnarray}
&&\left\vert f(B)\right\vert ^{2}\int_{D}\int_{\Gamma _{y_{1}}^{2}}\frac{%
A(y_{2})}{\theta _{1}^{d(\alpha -2)/2}}d\nu (y_{2})d\nu (y_{1})  \notag \\
&\leq &4\pi ^{2}\left\vert f(B)\right\vert ^{2}r^{\frac{1}{2}\left[
4-(\alpha -2)d\right] }\int_{0}^{r}\theta _{1}^{-d(\alpha -2)}\sin \theta
_{1}d\theta _{1}  \notag \\
&\leq &4\pi ^{2}\left\vert f(B)\right\vert ^{2}r^{4-d(\alpha -2)}.
\label{ineq:II'-3}
\end{eqnarray}%
Recall $f(B)=\sqrt{dK_{2,4}}B^{-\beta (2-\alpha /2)}$ and let $\kappa =\min
\left\{ 2,\frac{4-d(\alpha -2)}{(\alpha -2)}\right\} ,$ then combining the
inequalities $(\ref{ineq:II'-1}),$ $(\ref{ineq:II'-2})$ and $(\ref%
{ineq:II'-3})$ together with $(\ref{Eq:int-II})$ and $(\ref{ineq:II-1})$, we
have 
\begin{equation*}
\mathbb{E}\left[ L\left( \mathbf{T}(x),D\right) -L^{L,U}(\mathbf{T}(x),D)%
\right] ^{2}\leq K_{5,3}B^{-\kappa \beta (4-\alpha )}r^{4-d(\alpha -2)},
\end{equation*}%
where $K_{5,3}>0$ depends on $C_{5,4},C_{5,6},\alpha $ and $d.$ The proof is
then completed in view of the Chebyshev's inequality.
\end{proof}

Recall $w( r) =\rho _{\alpha }\left( r/\sqrt{\log \left\vert \log
r\right\vert }\right) ;$ we are now ready to prove Proposition \ref{Smooth}.

\begin{proof}[\textbf{Proof of Proposition \protect\ref{Smooth}}]
To establish the result, we only need to carefully construct a sequence $%
\left\{ r_{k}\right\} _{k=1}^{k_{0}}\subset \left[ r_{0}^{2},r_{0}\right] ,$
such that there exists a positive constant $C_{5,8}$ such that 
\begin{equation}  \label{PrOutLT}
\begin{split}
&\mathbb{P}\left\{ 
\begin{array}{c}
\text{for every }r_{k}\in \left[ r_{0}^{2},r_{0}\right] ,\ L\left( \mathbf{T}%
( x) ,D( x,r_{k}) \right) \leq \frac{\pi }{C_{5,8}}\phi \left( r_{k}\right)
\\ 
\text{ or }\sup_{d_{\mathbb{S}^{2}}( x,y) \leq r_{k}}\left\Vert \mathbf{T}(
x) -\mathbf{T}( y) \right\Vert>2C_{5,8}w( r_{k})%
\end{array}
\right\} \\
&\leq \exp \left\{ -\left( \left\vert \log r_{0}\right\vert \right) ^{\frac{1%
}{2}}\right\} .
\end{split}%
\end{equation}
Let $r_{k}=r_{0}B^{-\left( k-1\right) },k=1,\cdots ,k_{0},$ for some
constants $k_{0}$ and $B>1$ whose values will be determined later. Moreover, 
$k_{0}$ satisfies $k_{0}\leq \frac{\left\vert \log r_{0}\right\vert }{\log B}%
,$ so that $r_{0}^{2}\leq r_{k}\leq r_{0}$ for all $1\leq k\leq k_{0}.$ Now
let $L_{k}=\left[ \frac{B^{k}}{r_{0}}\right] +1,$ $U_{k}=\left[ \frac{B^{k+1}%
}{r_{0}}\right] ,$ where $\left[ \cdot \right] $ denotes the integer part as
before. Then by Lemma \ref{LemTmain}, we have 
\begin{eqnarray*}
&&\mathbb{P}\left\{ \sup_{d_{\mathbb{S}^{2}}( x,y) \leq r_{k}}\left\Vert 
\mathbf{T}^{L_{k},U_{k}}( x) -\mathbf{T}^{L_{k},U_{k}}( y) \right\Vert \leq
C_{5,8}w( r_{k}) \right\} \\
&\geq &\exp \left( -K_{5,1}( C_{5,8}) ^{-4/( \alpha-2) }\log \left\vert \log
r_{k}\right\vert \right) \geq \left(\left\vert \log r_{k}\right\vert \right)
^{-\frac{1}{4}},
\end{eqnarray*}%
where we have chosen $C_{5,8}$ large enough such that $K_{5,1}\left(C_{5,8}%
\right) ^{-4/\left( \alpha -2\right) }\leq \frac{1}{4}.$

On the other hand, recall $\mathbf{T}^{\Delta _{k}}=\mathbf{T}-\mathbf{T}%
^{L_{k},U_{k}},$ and $2-\alpha /2>\alpha /2-1$ for $2<\alpha <4,$ so we can
let $B$ large enough such that 
\begin{equation*}
\frac{B^{\beta (2-\alpha /2)}}{\sqrt{\log B}}\geq \left( \log \left\vert
\log r_{0}\right\vert \right) ^{\frac{\alpha }{2}-1},
\end{equation*}%
which leads to that, for any $k\in \left\{ 1,...,k_{0}\right\} ,$ 
\begin{equation*}
\rho _{\alpha }\left( r_{k}/\sqrt{\log \left\vert \log r_{k}\right\vert }%
\right) >B^{-\beta (2-\alpha /2)}\sqrt{\log B}r_{k}^{\alpha /2-1},
\end{equation*}%
and 
\begin{eqnarray*}
&&\mathbb{P}\left\{ \sup_{d_{\mathbb{S}^{2}}(x,y)\leq r_{k}}\left\Vert 
\mathbf{T}^{L_{k},U_{k}}\left( x\right) -\mathbf{T}^{L_{k},U_{k}}\left(
y\right) \right\Vert >C_{5,8}w(r_{k})\right\} \\
&\leq &\exp \left( -C_{5,9}\frac{B^{\beta (4-\alpha )}}{\left( \log
\left\vert \log r_{k}\right\vert \right) ^{\alpha /2-1}}\right) ,
\end{eqnarray*}%
where $C_{5,9}=C_{5,8}^{2}/K_{5,2}$ in view of Lemma \ref{LemTtail}.

Meanwhile, for any $\varepsilon >0$, let $B=B\left( \mathbf{T}%
^{L_{k},U_{k}}( x) ,\frac{\varepsilon }{2}\right) $ be an open cube on $%
\mathbb{R}^{d},$ which is centered at $\mathbf{T}^{L_{k},U_{k}}\left(
x\right) $ with length $\epsilon .$ The occupation measure of $\mathbf{T}%
^{L_{k},U_{k}}$ in $B$ can be defined as 
\begin{equation*}
\mu ^{L_{k},U_{k}}( I,D) =\int_{D( x,r) }\mathbf{1}_{B}\left( \mathbf{T}%
^{L_{k},U_{k}}( y) \right) d\nu (y) =\int_{B}L^{L_{k},U_{k}}( \mathbf{t},D(
x,r_{k})) d\mathbf{t},
\end{equation*}
where $L^{L_{k},U_{k}}$ is defined in $( \ref{repLT-LU}) .$ Now let $%
\varepsilon =C_{5,8}w( r) ,$ then by exploiting the continuity of $L\left( 
\mathbf{t},D\right) $ and Lemma \ref{LemTmain}, we have 
\begin{eqnarray}
&&\mathbb{P}\left\{ L^{L_{k},U_{k}}( \mathbf{T}( x),D) <\frac{v_{\mathbb{S}%
^{2}}( D( x,r_{k}) ) }{C_{5,8}w( r_{k}) }\right\}  \notag \\
&\leq & \mathbb{P}\left\{ \mu ^{L_{k},U_{k}}( I,D) <v_{\mathbb{S}^{2}}( D(
x,r_{k}) ) \right\}  \notag \\
&\leq &\mathbb{P}\left\{ \sup_{y\in D( x,r_{k}) }\left\Vert \mathbf{T}%
^{L_{k},U_{k}}( y) -\mathbf{T}^{L_{k},U_{k}}(x) \right\Vert >C_{5,8}w(
r_{k}) \right\}  \notag \\
&\leq &1-\exp \left( -K_{5,1}\frac{r_{k}^{2}}{( C_{5,8}w(r_{k}) ) ^{4/(
\alpha -2) }}\right)  \notag \\
&=&1-\exp \left( -K_{5,1}( C_{5,8}) ^{-4/( \alpha -2)}\log \left\vert \log
r_{k}\right\vert \right) .  \label{PrLab}
\end{eqnarray}
Let the positive constant $C_{5,8}$ be large enough so that $(C_{5,8}) ^{4/(
\alpha -2) }/K_{5,1}\geq 4,$ then 
\begin{equation*}
\mathbb{P}\left\{ L^{L_{k},U_{k}}\left( \mathbf{T}\left( x\right)
,D\left(x,r_{k}\right) \right) \leq \frac{\pi }{C_{5,8}}\phi \left(
r_{k}\right)\right\} \leq 1-\left\vert \log r_{k}\right\vert ^{-\frac{1}{4}}.
\end{equation*}
in view of the inequality $\left( \ref{PrLab}\right) .$ Now define $\mathcal{%
F}_{k,i},i=1,...,4,$ to be the events such that 
\begin{eqnarray*}
\mathcal{F}_{k,1} &=&:\left\{ \omega :\sup_{d_{\mathbb{S}^{2}}\left(x,y%
\right) \leq r_{k}}\left\Vert \mathbf{T}^{L_{k},U_{k}}( y) -\mathbf{T}%
^{L_{k},U_{k}}( x) \right\Vert >C_{5,8}w(r_{k}) \right\} , \\
\mathcal{F}_{k,2} &=&:\left\{ \omega :\sup_{d_{\mathbb{S}^{2}}(x,y) \leq
r_{k}}\left\vert \mathbf{T}^{\Delta _{k}}( x) -\mathbf{T}^{\Delta _{k}}( y)
\right\vert >C_{5,8}w(r_{k}) \right\} , \\
\mathcal{F}_{k,3} &=&:\left\{ \omega :L^{L_{k},U_{k}}\left( \mathbf{T}(x)
,D( x,r_{k}) \right) \leq \frac{2\pi }{C_{5,8}}\phi( r_{k}) \right\} , \\
\mathcal{F}_{k,4} &=&:\left\{ \omega :\left\vert L\left( \mathbf{T}(x) ,D(
x,r_{k}) \right) -L^{L_{k},U_{k}}\left( \mathbf{T}( x) ,D( x,r_{k}) \right)
\right\vert \leq \frac{\pi }{C_{5,8}}\phi ( r_{k}) \right\} .
\end{eqnarray*}
Obviously $\mathcal{F}_{k,3}\subset \mathcal{F}_{k,1}$ according to the
discussion above in (\ref{PrLab}). Therefore, we obtain that 
\begin{eqnarray*}
\left( \ref{PrOutLT}\right) &\leq &\mathbb{P}\left\{ \text{for every }k\in
\left\{ 1,...,k_{0}\right\} ,\mathcal{F}_{k,1}\cup \mathcal{F}_{k,2}\text{
or }\left\{ \mathcal{F}_{k,3}\cap \mathcal{F}_{k,4}\right\} \cup \mathcal{F}%
_{k,4}^{c}\text{ occurs}\right\} \\
&\leq &\prod\limits_{k=0}^{k_{0}}\mathbb{P}\left\{ \mathcal{F}_{k,1}\text{
or }\mathcal{F}_{k,3}\text{ occurs}\right\} + \sum_{k=0}^{k_{0}}\mathbb{P}%
\left\{ \mathcal{F}_{k,2}\text{ occurs}\right\} + \sum_{k=0}^{k_{0}}\mathbb{P%
}\left\{ \mathcal{F}_{k,4}^{c}\text{ occurs}\right\} \\
&\leq &\left( 1-\left\vert \log r_{0}\right\vert ^{-\frac{1}{4}%
}\right)^{k_{0}} +k_{0}\exp \left\{ -C_{5,9}\frac{B^{\beta \left( 4-\alpha
\right) }}{\left( \log \left\vert \log r_{0}\right\vert \right) ^{\alpha
/2-1}}\right\} \\
&&+\pi ^{-2}K_{5,3}C_{5,8}^{2}k_{0}\frac{B^{-\kappa \beta ( 4-\alpha) }}{%
\left( \log \left\vert \log r\right\vert \right) ^{\alpha -2}}.
\end{eqnarray*}
Recall that $k_{0}\leq \left\vert \log r_{0}\right\vert /\log B.$ Let $B$ be
large enough such that 
\begin{equation*}
\min \left\{ K_{5,3}C_{5,8}^{2}/\pi ^{2},C_{5,9}\right\} B^{\min
\left\{\kappa ,1\right\} \beta \left( 4-\alpha \right) } \geq \left\vert
\log r_{0}\right\vert ^{3}
\end{equation*}
as well as $B>B_{0},$ then it is readily seen that for $r_{0}$ small enough, 
\begin{eqnarray*}
\pi ^{-2}K_{5,3}C_{5,8}^{2}k_{0}\frac{B^{-\kappa \beta ( 4-\alpha) }}{\left(
\log \left\vert \log r\right\vert \right) ^{\alpha -2}} &\leq & \frac{%
\left\vert \log r_{0}\right\vert }{\log \left\vert \log r_{0}\right\vert }%
\frac{\left( \left\vert \log r_{0}\right\vert \right) ^{-3}}{\left( \log
\left\vert \log r_{0}\right\vert \right) ^{\alpha -2}} \\
&\leq &\frac{1}{3}\left\vert \log r_{0}\right\vert ^{-2},
\end{eqnarray*}
\begin{equation*}
k_{0}\exp \left\{ -C_{5,9}\frac{B^{\kappa \beta \left( 4-\alpha \right) }}{%
\left( \log \left\vert \log r_{0}\right\vert \right) ^{\alpha /2-1}}\right\}
\leq \frac{1}{3}\left\vert \log r_{0}\right\vert ^{-2}
\end{equation*}
and 
\begin{equation*}
\left( 1-\left\vert \log r_{0}\right\vert ^{-\frac{1}{4}}\right)^{k_{0}}\leq
\exp \left\{ -k_{0}\left\vert \log r_{0}\right\vert ^{-\frac{1}{4}}\right\}
\leq \frac{1}{3}\left\vert \log r_{0}\right\vert ^{-2}.
\end{equation*}
Therefore, 
\begin{equation*}
\left( \ref{PrOutLT}\right) \leq \left\vert \log r_{0}\right\vert ^{-2},
\end{equation*}
which leads to the conclusion of Proposition \ref{Smooth}.
\end{proof}

\end{document}